\newtheorem{theorem}{Theorem}[section]
\newtheorem{corollary}[theorem]{Corollary}
\newtheorem{lemma}[theorem]{Lemma}
\newtheorem{proposition}[theorem]{Proposition}
\newtheorem{remark}[theorem]{Remark}
\numberwithin{equation}{section}
\begin{document}
\title{Weak solutions to the collision-induced breakage equation with dominating coagulation} 
\thanks{Partially supported by the Indo-French Centre for Applied Mathematics (IFCAM) within the project \textsl{Collision-induced fragmentation and coagulation: dynamics and numerics}}
\author{Ankik Kumar Giri}
\address{Department of Mathematics, Indian Institute of Technology Roorkee\\ Roorkee--247667, Uttarakhand, India}
\email{ankik.giri@ma.iitr.ac.in/ankik.math@gmail.com}

\author{Philippe Lauren\c{c}ot}
\address{Institut de Math\'ematiques de Toulouse, UMR~5219, Universit\'e de Toulouse, CNRS \\ F--31062 Toulouse Cedex 9, France}
\email{laurenco@math.univ-toulouse.fr}

\keywords{coagulation; nonlinear fragmentation; collision-induced breakage; existence; conservation of matter; uniqueness}
\subjclass[2010]{45K05}

\date{\today}

\begin{abstract}
 Existence and uniqueness of weak solutions to the collision-induced breakage and coagulation equation are shown when coagulation is the dominant mechanism for small volumes. The collision kernel may feature a stronger singularity for small volumes than the ones considered in previous contributions. In addition, when the collision kernel is locally bounded, the class of fragment daughter distribution functions included in the analysis is broader. Mass-conserving solutions are also constructed when the collision kernel grows at most linearly at infinity and are proved to be unique for initial conditions decaying sufficiently fast at infinity. The existence proofs relies on a weak compactness approach in $L^1$.
\end{abstract}

\maketitle

%
%
\pagestyle{myheadings}
\markboth{\sc{A.K. Giri \& Ph. Lauren\c cot}}{\sc{Collision-induced breakage equation with dominating coagulation}}

\section{Introduction}\label{sec1}

Coagulation-fragmentation processes typically occur in the dynamics of particle growth and describe how particles can combine to form larger ones or split into smaller ones. Particle growth models are met in a wide range of contexts, including astrophysics, biology, chemistry, atmospheric science, aerosol science, and population dynamics, to name but a few. Assuming that each particle is completely characterized by a single size variable, such as its volume, a commonly used mathematical model describing coagulation and fragmentation events is known as the classical coagulation-fragmentation equation (CFE), see \cite{Melz57}. While it is well known that coagulation is a nonlinear process, particle breakup can be classified into two categories:  \emph{linear or spontaneous breakage} usually takes place spontaneously or is due to external forces and does not involve interactions between particles in the system under study. \emph{Nonlinear fragmentation}, also known as \emph{collision-induced breakage}, results from collisions between particles in the system \cite{Gili78, LiGi76, Safr72}. It is thus a genuinely nonlinear mechanism, in contrast to spontaneous fragmentation which is linear. Another fundamental difference is that spontaneous breakup only produces smaller daughter particles in general, while collision-induced breakage may allow some transfer of matter between the colliding particles, and thereby produce daughter particles with a size larger than the respective sizes of the parent particles. Within the framework of the classical CFE, it is mainly linear fragmentation which has been taken into account and studied in the mathematical literature since the pioneering works \cite{BaCa90, Fili61, McLe62c, Melz57, Spou84, Stew89, Whit80}, see \cite{BLL19,Bert06,Dubo94b} and the references therein. Only a few mathematical papers include collision-induced breakage \cite{BaGi20, BaGia, BaGib, LaWr01, Walk02, Wil82}, though several contributions are found in the physics literature, with a particular emphasis on dynamical predictions, formal asymptotics, numerical simulations, and special solutions, see \cite{BKBSHSS15, Jaco11, KoKa00, KoKa06, KrBN03, Safr72, Sriv78, Sriv82, VVF06}.
	
 This article is devoted to the well-posedness of the continuous version of the nonlinear \emph{collision-induced breakage and coagulation equation}, which describes the time evolution of the particle size distribution $f=f(t,x)\ge 0$ of particles of volume $x \in (0, \infty)$ at time $t \ge 0$ and reads
\begin{subequations}\label{cecb}
\begin{align}
\partial_t f(t,x) & = \mathcal{B}_c f(t,x) + \mathcal{B}_b f(t,x) - \mathcal{D}f(t,x)\,, \qquad (t,x)\in (0,\infty)^2\,, \label{cecb1} \\
f(0,x) & = f^{in}(x) \geq 0 \,, \qquad x\in (0,\infty)\,, \label{cecb2}
\end{align}
\end{subequations}
where
\begin{subequations}\label{r}
\begin{equation}
\mathcal{B}_c f(x) := \frac{1}{2} \int_0^x E(x-y,y) K(x-y,y) f(x-y) f(y)\ dy\,,  \label{bc}
\end{equation}
\begin{equation}
\mathcal{B}_b f(x) := \frac{1}{2} \int_x^\infty \int_0^y b(x,y-z,z) (1-E(y-z,z)) K(y-z,z) f(y-z) f(z)\ dzdy\,, \label{bd}
\end{equation}
and
\begin{equation}
\mathcal{D} f(x) := \int_0^\infty K(x,y) f(x) f(y)\ dy\,. \label{d}
\end{equation}
\end{subequations}

The collision kernel $K(x,y)=K(y,x)\ge 0$ defines the rate at which particles of volumes $x$ and $y$ collide and the function $E(x,y)=E(y,x)\in [0,1]$ denotes the probability that the colliding particles of volumes $x$ and $y$ aggregate to form a larger one of volume $x+y$. Then, $1-E(x,y)$ is the probability that a collision event leads to the breakup of the colliding particles with a possible transfer of matter to form two or more particles. The daughter distribution function $b(x,y,z)\ge 0$ describes the average number of particles of volume $x$ produced during the breakage events resulting from the collision between particles of volumes $y$ and $z$. Specifically, the first integral $\mathcal{B}_c f(t,x)$ in \eqref{cecb1} denotes the formation of particles of volume $x$ due to coagulation events, and the third integral $\mathcal{D} f(t,x)$ represents the disappearance of particles of volume $x$ due to collisions. Moreover, the second integral $\mathcal{B}_b f(t,x)$ in \eqref{cecb1} accounts for the birth of particles of volume $x$ due to the collision-induced breakage of particles of volumes $y-z$ and $z$. We assume that there is no loss of matter during breakup and that no particle of volume exceeding the total volume of the colliding particles is created; that is, the daughter distribution function $b$ satisfies
\begin{subequations}\label{p40}
\begin{equation}
b(z,x,y)=b(z,y,x)\,, \qquad z\in (0,x+y)\,, \qquad b(z,x,y) = 0\,, \quad z>x+y\,, \label{p40a}
\end{equation}
and
\begin{equation}
\int_0^{x+y} z b(z,x,y)\ dz = x+y \label{p40b}
\end{equation}
\end{subequations}
for all $(x,y)\in (0,\infty)^2$. Since there is also no loss of matter during coagulation events, it is then expected that the total mass of the system, which is nothing but the first moment of $f$, is preserved by the dynamics of \eqref{cecb}, in the sense that
\begin{equation}
	\int_0^\infty x f(t,x)\ dx = \int_0^\infty x f^{in}(x)\ dx\,, \qquad t\ge 0\,. \label{mc}
\end{equation}
It is however well-known by now that, for the \emph{Smoluchowski coagulation equation} (SCE) ($E\equiv 1$), there is a loss of matter in finite time when the collision kernel grows superlinearly for large volumes, a phenomenon usually referred to as gelation, and \eqref{mc} ceases to be valid after a finite time, see \cite{BLL19, Dubo94b, EMP02, Leyv03} and the references therein. We shall return to this issue below when stating assumptions on the collision kernel.

On the one hand, it is worth mentioning that, for $E\equiv 1$, equation~\eqref{cecb1} reduces to the classical continuous SCE. In this case, the collision between a pair of particles of volumes $x$ and $y$ always leads to the coalescence of both into a single particle of volume $x+y$. Interestingly, if $b(x,y,z)=\delta_{y-x}+\delta_{z-x}$, then equation~\eqref{cecb1} again simplifies to the classical SCE with coagulation kernel $E K$. As already mentioned, since its derivation by Smoluchowski \cite{Smol16} and the pioneering contributions \cite{BaCa90, McLe62c, Melz57, Spou84, Stew89, Whit80}, it has been extensively studied in the mathematical literature and we refer to \cite{BLL19, Bert06, Dubo94b, Leyv03} for a more detailed account. On the other hand, when $E=0$, equation~\eqref{cecb} reduces to the \emph{nonlinear collision-induced fragmentation equation}  \cite{ChRe90, ChRe88, FTL88}. Most theoretical studies of this equation in the physical literature actually assume that there is no transfer of matter during collisions; that is, the fragment daughter distribution function $b$ is given by
\begin{equation}
b(z,x,y)= \mathbf{1}_{(0,x)}(z) B(z,x,y) + \mathbf{1}_{(0,y)}(z)B(z,y,x)\,, \label{z1ph}
\end{equation}
see \cite{ChRe90, ChRe88, ErPa07, KoKa00, KoKa06}. Here, $B(z,x,y)$ represents the breakup kernel (or breakage function) which describes the rate at which particles of volume $z$ are created by the collision between particles of volumes $x$ and $y$. Clearly, such a daughter distribution function satisfies \eqref{p40a} and complies with \eqref{p40b} if
\begin{equation*}
\int_0^x z B(z,x,y)\ dz = x\,, \qquad (x,y)\in (0,\infty)^2\,.
\end{equation*}

Besides the classical collision kernel
\begin{equation*}
	K_{\text{sm}}(x,y) = \left( x^{1/3} + y^{1/3} \right) \left( \frac{1}{x^{1/3}} + \frac{1}{y^{1/3}} \right)\,, \qquad (x,y)\in (0,\infty)^2\,,
\end{equation*}
derived by Smoluchowski in the seminal paper \cite{Smol16}, typical examples of collision kernels $K$ include the sum/product collision kernel
\begin{equation}
	K_{\zeta,\eta}(x,y) = x^\zeta y^\eta + x^\eta y^\zeta\,, \qquad (x,y)\in (0,\infty)^2\,, \label{ck}
\end{equation}
with $\zeta\le\eta\le 1$. As for the fragment distribution function $b$, model cases include 
\begin{equation}
	b_{\nu}(z,x,y) = (\nu+2) \frac{z^\nu}{(x+y)^{\nu+1}} \mathbf{1}_{(0,x+y)}(z)\,, \qquad (x,y,z)\in (0,\infty)^3\,, \label{bp1}
\end{equation}
see \cite{FTL88, VVF06}, or
\begin{equation}
	\tilde{b}_{\nu}(z,x,y) = (\nu+2) \frac{z^\nu}{x^{\nu+1}} \mathbf{1}_{(0,x)}(z) + (\nu+2) \frac{z^\nu}{y^{\nu+1}} \mathbf{1}_{(0,y)}(z)\,, \qquad (x,y,z)\in (0,\infty)^3\,, \label{bp2}
\end{equation}
see \cite{ErPa07}, for some $\nu>-2$. Observe that both $b_{\nu}$ and $\tilde{b}_{\nu}$ comply with the conservation of matter \eqref{p40} and that $\tilde{b}_\nu$ is of the form \eqref{z1ph}. For collision kernels $K_{\zeta,\eta}$, the well-posedness theory for \eqref{cecb} differs markedly  in the extreme cases $E\equiv 1$ (coagulation only) and $E\equiv 0$ (nonlinear fragmentation only). Indeed, when $E\equiv 1$, global weak solutions to \eqref{cecb} exist whatever the values of $\zeta$ and $\eta$, see \cite{BLL19, BaGiLaxx, CCGW15, CCWa15, Dubo94b, Stew89} and the references therein. In contrast, when $E\equiv 0$ and $b=\tilde{b}_{\nu}$, $\nu>-1$, global weak solutions to \eqref{cecb} exist only for $\zeta\ge 0$ and $\zeta+\eta\ge 1$, while weak solutions to \eqref{cecb} cannot be global for $\zeta\ge 0$ and $\zeta+\eta\in [0,1)$ and no non-zero weak solution to \eqref{cecb} exists for $\zeta<0$ \cite{ErPa07, GiLa}. 

It is thus of interest to study the well-posedness of \eqref{cecb} for intermediate values of $E$ and we focus in this paper on the global existence issue for collision kernels featuring a singularity ($\zeta<0$) or being locally bounded $(\zeta=0$) for small volumes. According to the above discussion, global existence to \eqref{cecb} is expected to require $E$ to be sufficiently close to one in a way which depends on the behaviour of the collision kernel $K$ for small volumes (monitored by the exponent $\zeta$ when $K=K_{\zeta,\eta}$). In other words, coagulation is assumed to be the governing mechanism, at least for small volumes, and we show the existence of global weak solutions to \eqref{cecb} for collision kernels $K$  featuring possibly a singularity for small volumes and growing, either linearly, or subquadratically, at infinity, see \eqref{p2} and \eqref{p3} below, respectively. We actually provide a positive lower bound on $E$ which guarantees the global existence of weak solutions to \eqref{cecb} and extends the results of \cite{BaGia, BaGib} to a broader class of collision kernels $K$ and daughter distribution functions $b$. We also identify a class of collision kernels $K$ (which includes $K_{0,1}$) for which \eqref{cecb} has global weak solutions under the sole assumption that $E$ ranges in $[0,1]$. 

Let us now describe the class of collision kernels $K$ and daughter distribution functions $b$ we are dealing with in this paper. We assume that the collision kernel $K$ is a measurable and symmetric function in $(0,\infty)^2$ and that there are $\alpha\in [0,1/2)$ and $k_1>0$ such that
\begin{equation}
0 \le K(x,y) \le \left\{
\begin{split}
& k_1 (xy)^{-\alpha}\,, \qquad (x,y)\in (0,1)^2\,, \\
& k_1 x^{-\alpha} y \,, \qquad (x,y)\in (0,1)\times (1,\infty)\,, \\
& k_1 x y^{-\alpha}\,, \qquad (x,y)\in (1,\infty) \times (0,1)\,, \\
& k_1 x y\,, \qquad (x,y)\in (1,\infty)^2\,.
\end{split} 
\right. \label{p1}
\end{equation}
The collision kernel $K_{\zeta,\eta}$ defined in \eqref{ck} obviously satisfies \eqref{p1} with $\alpha=\max\{-\zeta,0\}$ when $-1/2<\zeta\le\eta\le 1$. As for the behaviour at infinity, we shall distinguish two different cases due to the already mentioned possible occurrence of the gelation phenomenon when the collision kernel increases superlinearly for large volumes. Specifically, we shall additionally assume  that, either there is $k_2>0$ such that
\begin{equation}
K(x,y) \le k_2 (x+y)\,, \qquad (x,y)\in (1,\infty)^2\,, \label{p2}
\end{equation}
or	
\begin{subequations}\label{p3}
\begin{equation}
K(x,y) \le \left\{
\begin{split}
& x^{-\alpha} r(y)\,, \qquad (x,y)\in (0,1)\times (1,\infty)\,, \\
& r(x) y^{-\alpha}\,, \qquad (x,y)\in (1,\infty)\times (0,1)\,, \\
& r(x) r(y)\,, \qquad (x,y)\in (1,\infty)^2\,, 
\end{split}
\right. \label{p3a}
\end{equation}
for some function $r: (0,\infty)\to [1,\infty)$ satisfying
\begin{equation}
\sup_{x>0} \left\{ \frac{r(x)}{1+x} \right\}< \infty \;\;\text{ and }\;\; \lim_{x\to\infty} \frac{r(x)}{x} = 0\,. \label{p3b}
\end{equation}
\end{subequations}
Clearly $K_{\text{sm}}$ satisfies \eqref{p2}, as well as $K_{\zeta,\eta}$ when $\zeta+\eta\le 1$, while $K_{\zeta,\eta}$ satisfies \eqref{p3} for $-1/2<\zeta\le\eta<1$. Roughly speaking, for a suitable class of initial conditions $f^{in}$, probabilities $E$, and daughter distribution functions $b$ (depending on $\alpha$), we establish in this paper the existence of mass-conserving solutions when $K$ satisfies \eqref{p1} and \eqref{p2} and the existence of weak solutions when $K$ satisfies \eqref{p1} and \eqref{p3}. Moreover, we show the uniqueness of mass-conserving solutions under additional restrictions on $f^{in}$ and $b$. In particular, we are able herein to handle collision kernels featuring higher singularities for small volumes than in \cite{BaGib}, thereby extending the existence results obtained in \cite{BaGib} for collision kernels $K$ being bounded from above by a multiple of the kernel $\tilde{K}_{\sigma,\eta}$ defined by
\begin{equation}
\tilde{K}_{\sigma,\eta}(x,y) := \frac{(1+x)^{\eta}(1+y)^{\eta}}{(x+y)^{\sigma}}\,, \qquad (x,y)\in (0,\infty)^2\,, \label{ckbg}
\end{equation}
with $\sigma \in [0, 1)$ and $\eta \in [0,(2+\sigma)/2)$. Indeed, $\tilde{K}_{\sigma,\eta}$ satisfies \eqref{p1} with $\alpha=\sigma/2$ and \eqref{p3} with $r(x) = x^{(2\eta-\sigma)/2}$, $x>0$. We also extend the existence result established in \cite{BaGia} for collision kernels $K_{\zeta,\eta}$ given by \eqref{ck} when $0 < \zeta \leq \eta < 1$, including in particular the constant collision kernel $K_{0,0}$ which is excluded from the analysis performed in \cite{BaGia, BaGib}. The uniqueness result we obtain herein also encompasses the one proved in \cite{BaGib} which is restricted to collision kernels satisfying \eqref{ckbg} for $\eta=0$ and $\sigma\in (0,1/2)$. Let us finally mention that, when $K$ is given by \eqref{ck} with $\eta=1-\zeta\in (0,1)$ and $b(z,x,y) = 2/(x+y)$, the existence and uniqueness of a global mass-conserving classical solution emanating from an initial condition $f^{in}\in C([0,\infty))\cap L^1((0,\infty),(1+x^{\max\{1+\mu,2-\mu\}})dx)$ are shown in \cite{BaGi20} and we supplement it here with the existence and uniqueness of a global mass-conserving weak solution with initial condition $f^{in}\in L^1((0,\infty),(1+x^2)dx)$.

\bigskip

Let us now outline the contents of the paper. We devote the remainder of the introduction to the statements of the assumptions on $E$ and $b$ which we need to establish the existence of global weak solutions to \eqref{cecb}. We already emphasize here that, as in \cite{BaGia, BaGib}, the main assumption is that coagulation is the governing mechanism, in the sense that $E$ has to be sufficiently close to one in a way which is controlled by the behaviour of the daughter distribution function $b$ for small volumes, see \eqref{p7b} below. We next gather the main results of this paper in Section~\ref{secM}, which deal with the existence and uniqueness of weak solutions to \eqref{cecb}. We also summarize there the outcome of our results for the collision kernels defined in \eqref{ck} and \eqref{ckbg} and the daughter distribution functions defined in \eqref{bp1} and \eqref{bp2}. The existence results are proved in Section~\ref{sec2} and the proofs are based on the weak $L^1$-compactness method which was originally developed in \cite{Stew89} for the CFE. To handle the possible singularity of the collision kernel $K$, we adapt the techniques developed in \cite{BLL19, BaGiLaxx, CCGW15, CCWa15, EMRR05} for the CFE with singular coagulation kernels, while the assumption on $E$ (dominating coagulation) allows us to control the behaviour of the distribution function for small volumes. The uniqueness proof is supplied in Section~\ref{sec3}.

\bigskip
 
 Coming back to equation~\eqref{cecb}, let $K$ be a collision kernel satisfying \eqref{p1} for some $\alpha\in [0,1/2)$. Concerning the daughter distribution function $b$, we assume that it is a measurable function satisfying \eqref{p40} but the analysis requires several additional assumptions which depend on the value of $\alpha$. We begin with a uniform integrability property and assume that there are a non-decreasing function $\omega: (0,\infty)\to (0,\infty)$ and $\theta\in (0,1)$ such that 
\begin{subequations}\label{p5}
\begin{equation}
\lim_{\xi\to 0} \omega(\xi) = 0 \label{p5b}
\end{equation}
and
\begin{equation}
\int_0^{x+y} z^{-\alpha} \mathbf{1}_A(z) b(z,x,y)\ dz \le \omega(|A|) (x+y)^{-\alpha} \left( x^{-\theta} + y^{-\theta} \right)\,, \qquad (x,y)\in (0,\infty)^2\,, \label{p5a}
\end{equation}
for all measurable subsets $A$ in $(0,\infty)$ with finite (Lebesgue) measure.
\end{subequations}
We finally require a control on the growth of $b$ for small sizes and separate the cases $\alpha=0$ and $\alpha>0$. 
\begin{itemize}
\item[(A)] If $\alpha=0$, then there are  $\beta_0\ge 1$ and $\beta_{-\theta}\ge 2^{-\theta}$ such that 
\begin{subequations}\label{p4}
\begin{align}
\int_0^{x+y} b(z,x,y)\ dz & \le \beta_0\,, \qquad (x,y)\in (0,\infty)^2\,, \label{p4a} \\
\int_0^{x+y} z^{-\theta} b(z,x,y)\ dz & \le \frac{\beta_{-\theta}}{2} (x^{-\theta}+y^{-\theta})\,, \qquad (x,y)\in (0,\infty)^2\,. \label{p4b}
\end{align}
\end{subequations}
\item[(B)] If $\alpha\in (0,1/2)$, then 
\begin{subequations}\label{p6}
\begin{equation}
\theta\in (0,\alpha] \,, \label{p6b}
\end{equation}	
and there is  $\beta_{-2\alpha}\ge 1$ such that
\begin{equation}
\int_0^{x+y} z^{-2\alpha} b(z,x,y)\ dz \le \beta_{-2\alpha} (x+y)^{-2\alpha}\,, \qquad (x,y)\in (0,\infty)^2\,. \label{p6a}
\end{equation}
\end{subequations}
Since $\alpha>0$, it readily follows from \eqref{p6a} that it also holds
\begin{align}
\int_0^{x+y} z^{-\alpha} b(z,x,y)\ dz & \le \beta_{-2\alpha} (x+y)^{-\alpha}\,, \qquad (x,y)\in (0,\infty)^2\,, \label{p6c} \\
\int_0^{x+y}  b(z,x,y)\ dz & \le \beta_{-2\alpha}\,, \qquad (x,y)\in (0,\infty)^2\,. \label{p6d}
\end{align}
\end{itemize}
Let us first mention that the daughter distribution function $b_\nu$ introduced in \eqref{bp1} satisfies \eqref{p5}, \eqref{p4}, and \eqref{p6} for appropriate values of  $\nu$ (depending on $\alpha$). Indeed, if $\alpha\in (0,1/2)$, then $b_\nu$ satisfies \eqref{p5} and \eqref{p6} for $\nu>2\alpha-1$ with 
\begin{equation*}
\omega(\xi) = (\nu+2) \left( \frac{p-1}{p(\nu+1-\alpha)-1} \right)^{(p-1)/p} \xi^{1/p} \,, \quad \theta = \frac{1}{p}\,, \quad \beta_{-2\alpha} = \frac{\nu+2}{\nu+1-2\alpha}\,,
\end{equation*}
where the parameter $p>\max\{1/\alpha,1/(\nu+1-\alpha)\}$ can be chosen arbitrarily provided that it is large enough. Similarly, if $\alpha=0$, then $b_\nu$ satisfies \eqref{p5} and \eqref{p4} for $\nu>-1$ with
\begin{equation*}
\omega(\xi) = (\nu+2) \left( \frac{p-1}{p(\nu+1)-1} \right)^{(p-1)/p} \xi^{1/p} \,, \quad \theta = \frac{1}{p}\,, \quad \beta_0 = \frac{\nu+2}{\nu+1}\,, \quad \beta_{-\theta} = \frac{\nu+2}{\nu+1-\theta}\,,
\end{equation*}
provided the parameter $p>1/(\nu+1)$ is large enough.

Since the daughter distribution function $\tilde{b}_\nu$ introduced in \eqref{bp2} features a higher singularity for small volumes, it only fits in the analysis performed in this paper for $\alpha=0$. Indeed, if $\alpha=0$,  then $\tilde{b}_\nu$ satisfies  \eqref{p5} and \eqref{p4} for $\nu>-1$ with
\begin{equation*}
\omega(\xi) = (\nu+2) \left( \frac{p-1}{p(\nu+1)-1} \right)^{(p-1)/p} \xi^{1/p} \,, \quad \theta = \frac{1}{p}\,, \quad \beta_0 = 2\frac{\nu+2}{\nu+1}\,, \quad \beta_{-\theta} = 2\frac{\nu+2}{\nu+1-\theta}\,,
\end{equation*}
provided the parameter $p>1/(\nu+1)$ is large enough.

Finally, the probability $E$ that a collision event leads to coalescence is a measurable function in $(0,\infty)^2$ such that 
\begin{subequations}\label{p7}
\begin{equation}
0 \le E(x,y) = E(y,x) \le 1 \,, \qquad (x,y)\in (0,\infty)^2\,, \label{p7a}
\end{equation}
and we assume that coagulation is the dominant mechanism for small sizes in the following sense:
\begin{equation}
E(x,y) \ge \max\left\{ 0 , \frac{\beta_{-2\alpha}-2^{1+2\alpha}}{\beta_{-2\alpha}-1} \right\}\,, \qquad (x,y)\in (0,1)^2\,, \label{p7b}
\end{equation}
where $\beta_{-2\alpha}=\beta_0$ is defined in  \eqref{p4a} for $\alpha=0$ and in \eqref{p6} for $\alpha\in (0,1/2)$.
\end{subequations}

\begin{remark}
Since we handle a rather general class of collision kernels in this paper, see \eqref{p1}, the lower bound \eqref{p7b} derived on $E$ might not be sharp. A more precise study involving specific choices of collision kernels (such as $K_{\zeta,\eta}$ in \eqref{ck}) and daughter distribution functions (such as $b_\nu$ in \eqref{bp1} or $\tilde{b}_\nu$ in \eqref{bp2}) is likely to be needed to identify sharp threshold values of $E$, in particular when $E$ is assumed to be constant. Also, the assumption \eqref{p1} does not take into account the vanishing properties of $K$ for small volumes (corresponding to $\zeta>0$ in the example $K_{\zeta,\eta}$) which are helpful to obtain global existence when $E\equiv 0$. We hope to investigate further these issues in the near future.
\end{remark}

\bigskip

\noindent\textbf{Notation.} Given a non-negative measurable function $V$ on $(0,\infty)$, we set $X_V := L^1((0,\infty),V(x) dx)$ and 
\begin{equation*}
M_V(h) := \int_0^\infty h(x) V(x)\ dx\,, \qquad h\in X_V\,.
\end{equation*}
We also denote the positive cone of $X_V$ by $X_V^+$, while $X_{V,w}$ stands for the space $X_V$ endowed with its weak topology. When $V(x)=V_m(x) := x^m$, $x\in (0,\infty)$, for some $m\in\mathbb{R}$, we set $X_m := X_{V_m}$ and
\begin{equation*}
M_m(h) := M_{V_m}(h) = \int_0^\infty x^m h(x)\ dx\,, \qquad h\in X_m\,.
\end{equation*}

\section{Main results}\label{secM}

 We begin with collision kernels featuring a singularity for small volumes ($\alpha\in (0,1/2)$) and report the following existence result, which can be seen as an extension of \cite{BaGib} to a broader class of collision kernels, including Smoluchowski's collision kernel. In fact, the singularity of $K$ allowed in \cite{BaGib} is of the form $(x+y)^{-\alpha}$ as $(x,y)\to (0,0)$, while stronger singularities of the form $(xy)^{-\alpha}$ or $x^{-\alpha}+y^{-\alpha}$ as $(x,y)\to (0,0)$ are included in our analysis.

\begin{theorem}[Existence: $\alpha\in (0,1/2)$] \label{tha1}
Let $\alpha\in (0,1/2)$ and consider an initial condition $f^{in} \in X_{-2\alpha}\cap X_1^+$. We assume that $K$, $b$, and $E$ satisfy \eqref{p1}, \eqref{p40}, \eqref{p5}, \eqref{p6}, and \eqref{p7}.
\begin{itemize}
\item[(a)] If $K$ satisfies also \eqref{p3}, then there is at least one global weak solution $f$ to \eqref{cecb}; that is, 
\begin{equation}
f\in C([0,\infty),X_{-\alpha,w})\cap  L^\infty((0,T),X_{-2\alpha}\cap X_1^+) \;\;\text{ for all }\;\; T>0\,, \label{pa1}
\end{equation}
is such that
\begin{equation}
M_1(f(t)) \le M_1(f^{in})\,, \qquad t\ge 0\,, \label{pa2}
\end{equation}
and $f$ satisfies 
\begin{equation}
\int_0^\infty \phi(x) [f(t,x)-f^{in}(x)]\ dx = \frac{1}{2} \int_0^t \int_0^\infty \int_0^\infty \zeta_\phi(x,y) K(x,y) f(s,x) f(s,y)\ dydxds \label{pa3}
\end{equation}
for all $t>0$ and $\phi\in L^\infty(0,\infty)$, where
\begin{equation*}
\zeta_\phi(x,y) := E(x,y) \phi(x+y) + (1-E(x,y)) \int_0^{x+y} \phi(z) b(z,x,y)\ dz - \phi(x) - \phi(y)
\end{equation*}
for $(x,y)\in (0,\infty)^2$.

\item[(b)] If $K$ satisfies also \eqref{p2}, then there is at least one global mass-conserving weak solution $f$ to \eqref{cecb}; that is, 
\begin{equation}
f\in C([0,\infty),X_{-\alpha,w}\cap X_{1,w})\cap L^\infty((0,T),X_{-2\alpha}\cap X_1^+) \;\;\text{ for all }\;\; T>0\,,\label{pa4}
\end{equation}
\begin{equation}
M_1(f(t)) = M_1(f^{in})\,, \qquad t\ge 0\,, \label{pa5}
\end{equation}
and f satisfies \eqref{pa3}.

\item[(c)] If $K$ satisfies also \eqref{p2} and $f^{in}\in X_2$, then there is at least one global mass-conserving weak solution $f$ to \eqref{cecb} satisfying \eqref{pa3}, \eqref{pa4}, \eqref{pa5} and such that $f\in L^\infty((0,T),X_2)$ for each $T>0$.
\end{itemize} 
\end{theorem}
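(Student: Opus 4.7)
The plan is to adapt the weak $L^1$-compactness framework of Stewart, combined with the singular-coagulation techniques of the references cited in the introduction, to the present nonlinear-fragmentation setting. For each integer $n\ge 1$ I regularize by truncating the collision kernel to $K_n(x,y):=K(x,y)\mathbf{1}_{(1/n,n)}(x)\mathbf{1}_{(1/n,n)}(y)$ and the initial datum to $f_n^{in}:=f^{in}\mathbf{1}_{(1/n,n)}$. Classical Banach-space ODE theory on $L^1((1/n,n))$ with the bounded kernel $K_n$ yields a unique non-negative solution $f_n$. The remaining work is to derive $n$-uniform a priori estimates, extract a weakly convergent subsequence, and pass to the limit in \eqref{pa3}.

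The decisive a priori bound is on $M_{-2\alpha}(f_n(t))$. Testing the regularized weak formulation against $\phi(x)=x^{-2\alpha}$ and invoking \eqref{p6a} gives
\begin{equation*}
\zeta_\phi(x,y) \le \bigl[\beta_{-2\alpha} - E(x,y)(\beta_{-2\alpha}-1)\bigr](x+y)^{-2\alpha} - x^{-2\alpha} - y^{-2\alpha}.
\end{equation*}
The convexity inequality $x^{-2\alpha}+y^{-2\alpha}\ge 2^{1+2\alpha}(x+y)^{-2\alpha}$ recasts the right-hand side as $\bigl[(1-E(x,y))(\beta_{-2\alpha}-1)-(2^{1+2\alpha}-1)\bigr](x+y)^{-2\alpha}$, which is non-positive on $(0,1)^2$ precisely by the threshold assumption \eqref{p7b}. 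On the remaining regions of $(0,\infty)^2$ the positive part of $\zeta_\phi$ is bounded by a constant, so \eqref{p1} and the mass bound $M_1(f_n(t))\le M_1(f^{in})$ (obtained by testing against $\phi(x)=x$ and using \eqref{p40b}) control the corresponding contribution by $CM_1(f^{in})(M_1(f^{in})+M_{-2\alpha}(f_n(t)))$. Gronwall's lemma then yields a bound on $M_{-2\alpha}(f_n(t))$ uniform in $n$ on every compact time interval.

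With these estimates in hand, weak $L^1$-compactness on bounded subsets of $(0,\infty)$ follows from the Dunford--Pettis theorem: concentration at $0$ and escape to infinity are precluded by the $M_{-2\alpha}$ and $M_1$ bounds, while uniform integrability is obtained by a de la Vallée Poussin argument, propagating a superlinear integral $\int\Phi(f_n)\,dx$ through the weak formulation, the crucial control of the fragment contribution against arbitrary measurable sets being supplied by \eqref{p5}. A uniform bound on $\partial_t f_n$ tested against $L^\infty$ functions yields time equi-continuity in $X_{-\alpha,w}$, so a diagonal extraction furnishes a limit $f$ and a passage to the limit in \eqref{pa3}, proving (a). For (b), the linear growth \eqref{p2} combined with the $M_1$ control rules out gelation by showing $\sup_{n,\,t\in[0,T]}\int_R^\infty xf_n(t,x)\,dx\to 0$ as $R\to\infty$, which upgrades the convergence to $C([0,\infty),X_{1,w})$ and delivers \eqref{pa5}. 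For (c), testing against a truncated version of $\phi(x)=x^2$ together with $\int z^2 b(z,x,y)\,dz\le(x+y)^2$ from \eqref{p40b} and the linear bound \eqref{p2} yields a differential inequality of the form $\tfrac{d}{dt}M_2(f_n)\le CM_1(f_n)M_2(f_n)+C$, which Gronwall propagates and transfers to the limit. The main obstacle is the $M_{-2\alpha}$ estimate on $(0,1)^2$: it is there that the algebraic threshold \eqref{p7b} must absorb exactly the positive gain from the singular fragment distribution, and the subsequent compactness and limit-passage arguments are comparatively standard.
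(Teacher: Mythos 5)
Your overall plan coincides with the paper's: truncate the kernel and initial datum, solve the truncated problem via Banach-space ODE theory, derive $n$-uniform moment estimates (the $M_1$ bound, then the $M_{-2\alpha}$ bound through the convexity inequality and the threshold \eqref{p7b}), establish weak $L^1$-compactness and time equicontinuity, extract via a weak Arzel\`a--Ascoli argument, and pass to the limit in the weak formulation with a Vitali/dominated-convergence argument. The key algebra for $\zeta_{-2\alpha}$ on $(0,1)^2$ is exactly right, as is the treatment of the second moment in part~(c).

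Two steps are described imprecisely and, as stated, would not go through. First, for uniform integrability you propose ``a de la Vall\'ee Poussin argument, propagating a superlinear integral $\int\Phi(f_n)\,dx$,'' yet simultaneously invoke \eqref{p5} as ``control of the fragment contribution against arbitrary measurable sets.'' These are two different routes. The hypothesis \eqref{p5} is tailored for the \emph{direct} modulus-of-uniform-integrability argument (Stewart's): one defines
\begin{equation*}
\mathcal{E}_{n,R}(t,\delta):=\sup\Bigl\{\int_A x^{-\alpha}f_n(t,x)\,dx\;:\;A\subset(0,R),\ |A|\le\delta\Bigr\},
\end{equation*}
tests the weak form against $\phi_A(x)=x^{-\alpha}\mathbf{1}_A(x)$, uses \eqref{p5a} to bound the fragmentation birth term by $\omega(\delta)$ times moment quantities, and closes via Gronwall. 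If you instead tried to propagate $\int\Phi(f_n)\,dx$ for convex superlinear $\Phi$, you would need a hypothesis on $b$ compatible with that nonlinear functional, which \eqref{p5} does not provide, and the $\mathcal{B}_b$ contribution to $\frac{d}{dt}\int\Phi(f_n)$ would not be controllable. The paper uses the direct approach; the uniform integrability must also be established in $X_{-\alpha}$ (not merely $X_0$) to handle the singular kernel in the limit passage. Second, for part~(b) you assert that \eqref{p2} and the $M_1$ bound already imply $\sup_{n,t\le T}\int_R^\infty xf_n\,dx\to 0$ as $R\to\infty$; this does not follow from a first-moment bound alone. The paper closes this gap by applying the de la Vall\'ee Poussin theorem \emph{at the level of the initial datum} to produce a convex superlinear $\psi$ with $M_\psi(f^{in})<\infty$, and then propagating $M_\psi(f_n(t))$ via the differential inequality of Lemma~\ref{lemb4} (which crucially uses the convexity of $\psi$, Jensen's inequality on $\int\psi(z)b(z,x,y)\,dz\le\psi(x+y)$, and the linear growth \eqref{p2}). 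The bound on $M_\psi(f_n)$ is what yields the tightness at infinity and thereby upgrades the convergence to $C([0,T],X_{1,w})$ and gives mass conservation. So the de la Vall\'ee Poussin theorem does appear in the proof, but in part~(b) for the tails at infinity, not in the small-size uniform integrability step.

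Finally, your truncation $K_n=K\,\mathbf{1}_{(1/n,n)}(x)\mathbf{1}_{(1/n,n)}(y)$ does not guarantee that the coagulation birth term keeps the support inside $(0,n)$; a collision of sizes near $n$ produces a particle of size near $2n$, so the ODE cannot be posed on $L^1(0,n)$. The paper instead uses $K_n=\min\{n,K\}\mathbf{1}_{(0,n)}(x+y)$, which makes the truncated problem genuinely self-contained on $(0,n)$. This is easily fixed but worth noting.
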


The next result extends \cite{BaGia, BaGi20} and, in contrast to Theorem~\ref{tha1}, applies to daughter distribution functions $b$ considered in \cite{ChRe90, ErPa07}, see \eqref{bp2}.

\begin{theorem}[Existence: $\alpha=0$]\label{tha2}
Let $\alpha=0$ and consider an initial condition $f^{in} \in X_{-\theta}\cap X_1^+$. We assume that $K$, $b$, and $E$ satisfy \eqref{p1}, \eqref{p40}, \eqref{p5}, \eqref{p4}, and \eqref{p7}.
\begin{itemize}
\item[(a)] If $K$ satisfies also \eqref{p3}, then there is at least one global weak solution $f$ to \eqref{cecb}; that is, $f$ satisfies \eqref{pa1}, \eqref{pa2}, and \eqref{pa3}. 
\item[(b)] If $K$ satisfies also \eqref{p2}, then there is at least one global mass-conserving weak solution $f$ to \eqref{cecb}; that is, $f$ satisfies \eqref{pa3}, \eqref{pa4}, and \eqref{pa5}.

\item[(c)] If $K$ satisfies also \eqref{p2} and $f^{in}\in X_2$, then there is at least one global mass-conserving weak solution $f$ to \eqref{cecb} satisfying \eqref{pa3}, \eqref{pa4}, \eqref{pa5} and such that $f\in L^\infty((0,T),X_2)$ for each $T>0$.
\end{itemize}
\end{theorem}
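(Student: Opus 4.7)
The plan is to adapt the weak $L^1$-compactness approach of \cite{Stew89}, in the spirit of \cite{BaGiLaxx, BLL19, CCGW15, EMRR05}, proceeding through a truncation of the collision kernel, a cascade of a priori estimates, Dunford--Pettis compactness, and passage to the limit in the weak formulation \eqref{pa3}. Since $\alpha=0$, the collision kernel $K$ is locally bounded by \eqref{p1}, so the only genuine singularity in the equation is carried by the daughter distribution function $b$ through the birth term $\mathcal{B}_b f$. The role of the dominating-coagulation assumption \eqref{p7b} (with $\beta_{-2\alpha}=\beta_0$) is thus to prevent the number of particles from inflating under repeated breakage at small volumes, while \eqref{p4b} provides the extra control needed at weight $x^{-\theta}$ when $b$ itself has a $z^{\nu}$ singularity at the origin (as in $\tilde{b}_\nu$).

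Concretely, I would first regularize by setting $K_n(x,y) := K(x,y)\mathbf{1}_{\{1/n \le \min(x,y)\}\cap\{\max(x,y) \le n\}}$ and $f_n^{in} := f^{in}\mathbf{1}_{(1/n,n)}$. Standard Banach fixed point arguments in $L^1((0,n),(1+x)dx)$ then produce a unique non-negative compactly supported solution $f_n$ to \eqref{cecb} with $(K_n,f_n^{in})$, satisfying \eqref{pa3} with $K_n$ in place of $K$ for every $\phi \in L^\infty(0,\infty)$. The heart of the argument is then a cascade of three uniform estimates. Testing with $\phi(x)=x$, $\zeta_\phi$ vanishes identically by \eqref{p40b} and \eqref{p7a}, giving $M_1(f_n(t)) \le M_1(f^{in})$. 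Testing with $\phi \equiv 1$, the small-small region $(0,1)^2$ contributes a non-positive integrand thanks to \eqref{p7b} combined with \eqref{p4a}, while \eqref{p1} bounds the three remaining regions by a constant times $M_0(f_n) M_1(f_n) + M_1(f_n)^2$; since $f^{in}\in X_{-\theta}\cap X_1$ embeds into $X_0$, Gronwall's lemma then yields a uniform bound on $M_0(f_n)$ on each $[0,T]$. Finally, testing with $\phi(x)=x^{-\theta}$, the elementary inequality $(x+y)^{-\theta} \le 2^{-\theta}(x^{-\theta}+y^{-\theta})$ combined with \eqref{p4b} yields
\begin{equation*}
\zeta_\phi(x,y) \le \left[ 2^{-\theta} E(x,y) + (1-E(x,y)) \frac{\beta_{-\theta}}{2} - 1 \right] \bigl( x^{-\theta} + y^{-\theta} \bigr),
\end{equation*}
so that a region-wise decomposition based on \eqref{p1} and the previous bounds on $M_0$ and $M_1$ reduce the time evolution of $M_{-\theta}(f_n)$ to a linear Gronwall inequality, producing a uniform bound on $M_{-\theta}(f_n)$.

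The $X_{-\theta}$ bound gives equiintegrability of $\{f_n\}$ at $x=0$, while equiintegrability at $x=\infty$ comes from a de la Vall\'ee--Poussin function built from $f^{in}$ and propagated through $\mathcal{B}_b f_n$ via \eqref{p5a}. A standard time-equicontinuity argument then places $\{f_n\}$ in a compact subset of $C([0,T], X_{-\theta,w})$, and passage to the limit in \eqref{pa3} using Vitali's theorem yields part (a). For (b), the subquadratic growth \eqref{p2} combined with an auxiliary weight $V(x)=x\psi(x)$ with $\psi$ unbounded but $\psi(x)/x \to 0$ sufficiently slowly (as in \cite{EMRR05, BLL19}) produces uniform tail equiintegrability, upgrading the convergence to $X_{1,w}$ and giving \eqref{pa5}. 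Part (c) follows by testing the compactly supported $f_n$ with $\phi(x)=x^2$: using \eqref{p40b} to combine the coagulation and breakage contributions and \eqref{p2} to bound $K$ by $k_2(x+y)$ on $(1,\infty)^2$ gives $\dot M_2(f_n) \le C M_1(f_n) M_2(f_n)$, which propagates the second moment on finite intervals. The principal obstacle is the coupling hidden in the small-volume analysis: the $M_{-\theta}$ bound relies on $M_0$ being uniformly controlled, which itself requires \eqref{p7b}, and equiintegrability at $0$ must survive the action of the possibly singular birth integral $\mathcal{B}_b f_n$; this is where the interplay between \eqref{p4}, \eqref{p5}, and \eqref{p7b} is essential, especially when $b$ is of the form $\tilde{b}_\nu$ with $\nu\in(-1,0)$.
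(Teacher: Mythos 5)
Your proposal reproduces the paper's strategy almost step by step: truncate the kernel (the paper uses $K_n=\min\{n,K\}\mathbf 1_{(0,n)}(x+y)$, but cutting at $1/n$ also works since $\alpha=0$ makes $K$ locally bounded), use \cite{Walk02} for the truncated problem, test with $\phi=x$ to get $M_1(f_n)\le\varrho$, test with $\phi\equiv1$ using \eqref{p4a} and \eqref{p7b} to close a Gronwall inequality for $M_0(f_n)$, then test with $\phi=x^{-\theta}$ using \eqref{p4b} to bound $M_{-\theta}(f_n)$ — this is exactly the paper's Lemma~\ref{lemb2}, with the rest of the scaffolding (time equicontinuity, Arzel\`a--Ascoli in $C([0,T],X_{0,w})$, passage to the limit via dominated convergence for the weak form, and the de la Vall\'ee-Poussin upgrade for parts (b) and (c)) identical to Section~\ref{sec2.1} read at $\alpha=0$. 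The minor quibble about $\dot M_2\le CM_1M_2$ missing a harmless $M_1^2$ term does not affect anything.

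There is, however, one genuinely incorrect sentence that would leave a hole if taken literally. You write that the $X_{-\theta}$ bound gives ``equiintegrability of $\{f_n\}$ at $x=0$'' and that ``equiintegrability at $x=\infty$ comes from a de la Vall\'ee--Poussin function \ldots via \eqref{p5a}.'' Neither attribution is right. A uniform bound on $M_{-\theta}(f_n)$ (together with $M_0$, $M_1$ bounded) only gives tightness near $0$ and $\infty$; it does \emph{not} rule out concentration on sets of small Lebesgue measure at intermediate scales, which is what Dunford--Pettis actually requires. For instance $f_n=n\,\mathbf 1_{(1,1+1/n)}$ has uniformly bounded $M_{-\theta}$, $M_0$, $M_1$ moments but no weakly convergent subsequence in $L^1$. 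What eliminates this concentration is the uniform-integrability hypothesis \eqref{p5a} on the daughter distribution, propagated by the Gronwall estimate on $\mathcal E_{n,R}(t,\delta)$ as in Lemma~\ref{lemb5}; this is a small-set control, not a tail-at-$\infty$ control. The tail at $\infty$ is handled by $M_1(f_n)\le\varrho$ in part (a), and by a de la Vall\'ee--Poussin superlinear moment in parts (b)--(c) — with no involvement of \eqref{p5a}. Once you reassign the roles correctly — \eqref{p5a} for the modulus of uniform integrability on small sets, $M_1$ or a superlinear moment for the tail — the argument closes exactly as in the paper.
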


 We next provide a variant of Theorem~\ref{tha2} where we relax the integrability properties of $f^{in}$ for small sizes at the expense of slightly stronger assumptions on the daughter distribution function $b$. 
	
\begin{theorem}[Existence: $\alpha=0$]\label{tha2b}
Let $\alpha=0$ and consider an initial condition $f^{in} \in X_0 \cap X_1^+$. We assume that $K$, $b$, and $E$ satisfy \eqref{p1}, \eqref{p40}, \eqref{p5}, \eqref{p4a}, \eqref{p7}, and: either there is $\beta_{-\theta}'\ge 2$ such that
\begin{equation}
\int_0^{x+y} z^{-\theta} b(z,x,y)\ dz \le \frac{\beta_{-\theta}'}{2} (x+y)^{-\theta} \,, \qquad (x,y)\in (0,\infty)^2\,, \label{paa4a}
\end{equation}
\begin{subequations}\label{paa4}
or there are $\beta_{-\theta}'\ge 2$ and a non-negative measurable function $\bar{b}$ such that
\begin{align}
b(z,x,y) = \bar{b}(z,x,y) \mathbf{1}_{(0,x)}(z) + \bar{b}(z,y,x) \mathbf{1}_{(0,y)}(z) \,, \qquad (x,y,z)\in (0,\infty)^3\,, \label{paa4b} \\
\int_0^x z \bar{b}(z,x,y)\ dz = x\,, \qquad (x,y)\in (0,\infty)^2\,, \label{paa4c} \\
\int_0^x z^{-\theta} \bar{b}(z,x,y)\ dz \le \frac{\beta_{-\theta}'}{2} x^{-\theta}\,, \qquad (x,y)\in (0,\infty)^2\,. \label{paa4d}
\end{align}
\end{subequations}

\begin{itemize}
	\item[(a)] If $K$ satisfies also \eqref{p3}, then there is at least one global weak solution $f$ to \eqref{cecb}; that is, $f$ satisfies \eqref{pa1}, \eqref{pa2}, and \eqref{pa3}. 
	\item[(b)] If $K$ satisfies also \eqref{p2}, then there is at least one global mass-conserving weak solution $f$ to \eqref{cecb}; that is, $f$ satisfies \eqref{pa3}, \eqref{pa4}, and \eqref{pa5}.		
	\item[(c)] If $K$ satisfies also \eqref{p2} and $f^{in}\in X_2$, then there is at least one global mass-conserving weak solution $f$ to \eqref{cecb} satisfying \eqref{pa3}, \eqref{pa4}, \eqref{pa5} and such that $f\in L^\infty((0,T),X_2)$ for each $T>0$.
	\end{itemize}
\end{theorem}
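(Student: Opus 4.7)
The proof follows the weak $L^1$-compactness scheme developed in Section~\ref{sec2} for Theorems~\ref{tha1} and \ref{tha2}, so I focus on the modifications dictated by the weaker assumption $f^{in}\in X_0\cap X_1^+$ on the initial datum. The starting point is to approximate $f^{in}$ by $f^{in}_n := f^{in}\mathbf{1}_{(1/n,n)}$ and to replace $K$ by a bounded cut-off $K_n$ supported away from the origin; a standard fixed-point construction on the corresponding regularised equation yields non-negative solutions $f_n$. Testing with $\phi=1$ and $\phi(z)=z$, and using \eqref{p4a} combined with the dominant-coagulation condition \eqref{p7b}, produces the bounds $M_0(f_n(t))\le M_0(f^{in})$ and $M_1(f_n(t))\le M_1(f^{in})$ uniformly in $n$, exactly as in Theorem~\ref{tha2}. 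Tail control at infinity follows from \eqref{p2} or \eqref{p3}, and the $M_2$ propagation needed in part~(c) is obtained in the standard way under \eqref{p2}.

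The main novelty concerns uniform integrability of $\{f_n(t,\cdot)\}_n$ at the origin. In Theorem~\ref{tha2} this was achieved by propagating the $X_{-\theta}$ bound carried by $f^{in}$; here such a bound is no longer available and uniform integrability must be generated from the dynamics. To this end, I would invoke the de la Vall\'ee--Poussin theorem to select a convex non-decreasing $\Phi:[0,\infty)\to[0,\infty)$ with $\Phi(u)/u\to\infty$ as $u\to\infty$ and $\int_0^\infty\Phi(f^{in}(z))\,dz<\infty$, and then derive a uniform-in-$n$ bound on $\int_0^1\Phi(f_n(t,z))\,dz$ by testing the strong form of the equation against $\Phi'(f_n)\mathbf{1}_{(0,1)}$ (rigorously after approximation). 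The coagulation loss provides a favourable sink, the coagulation gain is controlled by the convexity of $\Phi$ together with the $M_0$ bound, and the breakage gain is the delicate term: it is here that the strengthened assumptions \eqref{paa4a} or \eqref{paa4} are crucially used, either through the improved bound $(x+y)^{-\theta}$ in place of $x^{-\theta}+y^{-\theta}$, or through the structural decomposition \eqref{paa4b} enforcing fragments strictly smaller than one of the parents. Combined with \eqref{p7b}, these ensure that the resulting differential inequality closes via Gronwall into a local-in-time, uniform-in-$n$ bound on $\int_0^1\Phi(f_n(t,z))\,dz$.

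With this bound in hand, the remainder of the argument follows Section~\ref{sec2}: the Dunford--Pettis criterion yields weak sequential compactness of $\{f_n(t,\cdot)\}_n$ in $X_0\cap X_1$, time equicontinuity of $t\mapsto M_\phi(f_n(t))$ for $\phi\in L^\infty(0,\infty)$ follows from \eqref{cecb} together with the previous estimates, and a refined Arzel\`a--Ascoli argument extracts a subsequence converging in $C([0,T],X_{0,w})$ (and in $C([0,T],X_{1,w})$ in cases (b) and (c)) towards a weak solution $f$ of \eqref{cecb} satisfying \eqref{pa3}. Mass conservation in (b) and (c) is then derived as in the proofs of Theorems~\ref{tha1}(b,c) and \ref{tha2}(b,c), using the tail estimate supplied by \eqref{p2} and, in (c), the propagated $M_2$ bound.

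The main obstacle is the second paragraph above: generating uniform integrability at zero dynamically, rather than propagating it from a stronger initial integrability. The balance at small sizes between the breakage gain and the coagulation loss is delicate, and the strengthened hypotheses \eqref{paa4a} and \eqref{paa4} are tailor-made to tip it in favour of the loss, so that a Gronwall argument can close the differential inequality. In the \eqref{paa4} case, one additionally exploits that each fragment is smaller than one of its two parents, which, via \eqref{paa4b}--\eqref{paa4d}, allows a clean separation of the small-daughter contribution according to the size of the contributing parent and so recovers a quantitative estimate in the same spirit as \eqref{paa4a}.
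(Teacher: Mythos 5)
Your proposal diverges from the paper in a crucial way: the paper does \emph{not} invoke de la Vall\'ee--Poussin in the values of $f^{in}$ to control the entropy functional $\int\Phi(f_n(t,x))\,dx$, but rather applies a refined de la Vall\'ee--Poussin argument (Lemma~\ref{leap1}) to the \emph{size variable}. Concretely, since $f^{in}\in X_0$ means $x\mapsto 1/x$ is integrable against $x f^{in}(x)\,dx$, the paper constructs a non-negative, convex, non-increasing \emph{weight} $\Phi(x)$ with $\Phi(x)\to\infty$ as $x\to 0$, $x^\theta\Phi(x)\to 0$ as $x\to 0$, $x\mapsto x^\theta\Phi(x)$ non-decreasing, and $\int_0^\infty\Phi(x)f^{in}(x)\,dx<\infty$. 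It then propagates the \emph{moment} $M_\Phi(f_n)$ in Lemma~\ref{lemv2} by testing the weak formulation with $\Phi$, and uses the monotonicity of $z\mapsto z^\theta\Phi(z)$ together with \eqref{paa4a} or \eqref{paa4} to obtain the pointwise bound $\zeta_\Phi(x,y)\le\tfrac{\beta_{-\theta}'}{2}\bigl[\Phi(x)+\Phi(y)\bigr]$, which closes via Gronwall. So the paper \emph{does} propagate a quantity from the initial datum; it is not ``generated from the dynamics'' as you describe, it is just a different quantity than the $M_{-\theta}$ moment used in Theorem~\ref{tha2}.

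Your alternative route of testing the strong equation against $\Phi'(f_n)\mathbf{1}_{(0,1)}$ for $\Phi$ convex with $\Phi(u)/u\to\infty$ is a genuine entropy-style approach (in the spirit of Stewart's original compactness argument for pure coagulation), but I do not see how it closes here. The strengthened hypotheses \eqref{paa4a} and \eqref{paa4} control $z$-moments of the daughter distribution, namely $\int_0^{x+y}z^{-\theta}b(z,x,y)\,dz$. These plug in directly to the term $\int_0^{x+y}\Phi(z)b(z,x,y)\,dz$ that arises in the moment approach (via the monotonicity of $z\mapsto z^\theta\Phi(z)$), but they do \emph{not} provide any leverage on the breakage-gain contribution $\int_0^1\Phi'(f_n(t,x))\,\mathcal{B}_{b,n}f_n(t,x)\,dx$ in your approach, which is an integral of a nonlinear functional of $f_n$ against $b(x,y-z,z)$ in its \emph{first} argument, not a moment of $b$. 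Asserting that \eqref{paa4a}/\eqref{paa4} ``tip the balance'' there is unjustified; you would need a completely different structural estimate. Thus, the key step in your proposal is unsupported, and the route that actually makes the hypotheses of Theorem~\ref{tha2b} do their work is the size-weight moment estimate of Lemma~\ref{lemv2}, not an entropy bound in the values of $f_n$.
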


Clearly, \eqref{paa4a} and \eqref{paa4d} separately imply \eqref{p4b} with $\beta_{-\theta}=\beta_{-\theta}'$, so that these two assumptions are indeed stronger than \eqref{p4b}. But, no negative finite moment is required for the initial condition in Theorem~\ref{tha2b}, in contrast to Theorem~\ref{tha2}. Relaxing this assumption is possible thanks to a refined version of the de la Vall\'ee Poussin theorem \cite{dlVP15} which we establish in Lemma~\ref{leap1}. It is also worth pointing out that Theorem~\ref{tha2b} also applies to the daughter distribution functions $b_\nu$ and $\tilde{b}_\nu$ defined in \eqref{bp1} and \eqref{bp2}, respectively, for $\nu>-1$. Indeed, $b_\nu$ and $\tilde{b}_\nu$ satisfy \eqref{paa4a} and \eqref{paa4}, respectively, with $\beta_{-\theta}'=2(\nu+2)/(\nu+1-\theta)$, recalling that $\theta=1/p<\nu+1$. We finally emphasize that Theorem~\ref{tha2b} applies to the constant collision kernel which is excluded from the analysis in \cite{BaGi20, BaGia}.

\begin{remark}\label{rema3}
As already noticed in \cite{BaGi20}, the daughter distribution function
\begin{equation}
b(z,x,y) = \frac{2}{x+y} \mathbf{1}_{(0,x+y)}(z)\,, \qquad (x,y,z)\in (0,\infty)^3\,, \label{p300}
\end{equation}	
has peculiar properties. Indeed, in that case, $\beta_0=2$ and \eqref{p7b} is always satisfied due to \eqref{p7a}. In particular, Theorem~\ref{tha2} provides an existence result for the collision-induced breakage equation which corresponds to the choice $E\equiv 0$ in \eqref{cecb}. 
\end{remark}

 In the same vein, an existence result for arbitrary non-negative $E$ is available for a specific class of collision kernels and is reported next. 

\begin{theorem}[$K(x,y)\le k_0 (x+y)$]\label{tha4}
Assume that the collision kernel $K$ is a measurable and symmetric function in $(0,\infty)^2$ and that there is $k_0>0$ such that 
\begin{equation}
0 \le K(x,y) \le k_0 (x+y)\,, \qquad (x,y)\in (0,\infty)^2\,. \label{p400}
\end{equation}
Assume also that $b$ satisfies \eqref{p40}, \eqref{p5}, and \eqref{p4}, while $E$ satisfies \eqref{p7a}. Given an initial condition $f^{in} \in X_{-\theta}\cap X_1^+$, there is at least one global mass-conserving weak solution $f$ to \eqref{cecb} satisfying \eqref{pa3}, \eqref{pa4}, \eqref{pa5}.
\end{theorem}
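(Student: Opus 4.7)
The plan is to follow the regularization and weak $L^1$-compactness strategy developed in Section~\ref{sec2} for Theorems~\ref{tha1}, \ref{tha2}, and \ref{tha2b}, with the role of the lower bound \eqref{p7b} on $E$ now being played by the strengthened growth bound \eqref{p400} on $K$. First, I would truncate the collision kernel as $K_n := K\mathbf{1}_{(0,n)^2}$ and the initial datum as $f_n^{in} := f^{in}\mathbf{1}_{(0,n)}$, and then obtain a unique non-negative solution $f_n \in C^1([0,\infty); L^1(0,n))$ of the corresponding bounded-kernel Cauchy problem, supported in $(0,n)$ at each time, by a standard Banach-space fixed-point argument. Testing the weak formulation of the truncated equation with $\phi(x)=x$ and using \eqref{p40b} immediately yields mass conservation $M_1(f_n(t))=M_1(f_n^{in})$.

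The heart of the argument is a uniform bound on the negative moment $M_{-\theta}(f_n)$ on every finite time interval, obtained by testing with $\phi(x)=x^{-\theta}$. Using \eqref{p4b} together with the elementary inequalities $(x+y)^{-\theta}\le\min\{x^{-\theta},y^{-\theta}\}$ and $x^{-\theta}+y^{-\theta}\le 2\max\{x^{-\theta},y^{-\theta}\}$, one checks the pointwise bound
\begin{equation*}
\zeta_{x^{-\theta}}(x,y) \le (\beta_{-\theta}-2)_+ \max\{x^{-\theta},y^{-\theta}\}\,,
\end{equation*}
which is independent of $E$. The point is then that \eqref{p400}, together with the symmetry of $K$ and the inequality $x+y\le 2y$ on $\{x\le y\}$, yields
\begin{equation*}
\iint K(x,y)f_n(x)f_n(y)\max\{x^{-\theta},y^{-\theta}\}\,dy\,dx \le 4 k_0 M_1(f_n)M_{-\theta}(f_n)\,,
\end{equation*}
which, combined with mass conservation, turns the differential estimate for $M_{-\theta}(f_n)$ into a linear one and closes the bound by Gronwall.

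With mass and negative-moment control at hand, the proof continues along standard lines. Uniform integrability of $(f_n)$ in $L^1((0,\infty);dx)$ is propagated from $f^{in}$ through a convex function supplied by de la Vall\'ee Poussin, the two moment bounds, \eqref{p5}, and \eqref{p400} being used to dominate the collision integrals. A Dunford-Pettis compactness argument, supplemented with a time-equicontinuity estimate on $\partial_t f_n$ in a suitable dual space, then produces a subsequence converging in $C([0,T];X_{-\theta,w}\cap X_{1,w})$ to some $f\in L^\infty((0,T);X_{-\theta}\cap X_1^+)$ satisfying \eqref{pa3}. Finally, mass conservation of the limit follows from \eqref{p400} by ruling out escape of mass at infinity via the usual truncation argument on the $\phi(x)=x$ identity.

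The main obstacle is the closure of the $M_{-\theta}$ estimate in the absence of \eqref{p7b}: without a lower bound on $E$ the integrand $\zeta_{x^{-\theta}}$ cannot be made pointwise non-positive, and the coarser estimate $\max\{x^{-\theta},y^{-\theta}\}\le x^{-\theta}+y^{-\theta}$ would produce a quadratic-in-$M_{-\theta}$ inequality with possible finite-time blow-up. It is precisely the combination of the sharper $\max$-bound with the linear growth \eqref{p400} of $K$ that collapses the inequality to a linear one.
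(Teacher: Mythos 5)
Your proposal is correct and follows the overall regularization and weak $L^1$-compactness architecture of Section~\ref{sec2}, but the way the moment estimate is closed differs from the paper. Lemma~\ref{lemb8} in the paper first bounds $M_0(f_n)$ by testing with $\phi\equiv 1$: the pointwise bound $\zeta_0\le\beta_0$ combined with \eqref{p400} gives $\frac{d}{dt}M_0(f_n)\le k_0\beta_0 M_0(f_n)M_1(f_n)$, which closes linearly by mass conservation and Gronwall; the $M_{-\theta}$ bound is then deduced exactly as in Lemma~\ref{lemb2}, \emph{using} the already-established $M_0$ bound. You instead bound $M_{-\theta}$ in one step via the sharper pointwise estimate $\zeta_{-\theta}(x,y)\le(\beta_{-\theta}-2)_+\max\{x^{-\theta},y^{-\theta}\}$ (which I have verified holds for all $E\in[0,1]$, since the left side is affine in $E$ and the bound holds at both endpoints) and then pair $\max\{x^{-\theta},y^{-\theta}\}$ with $x+y$ on the half-plane $\{x\le y\}$, producing $\frac{d}{dt}M_{-\theta}(f_n)\lesssim M_1(f_n)M_{-\theta}(f_n)$ directly; the $M_0$ bound, needed for $T_n=\infty$ and for Lemmas~\ref{lemb5}--\ref{lemb6}, then follows by H\"older interpolation between $M_{-\theta}$ and $M_1$. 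Both routes exploit the same observation, that \eqref{p400} converts the moment ODE into a linear one with $M_1=\varrho$ as coefficient, so the gain is modest, though your version is a step shorter. Two small inaccuracies in your discussion are worth flagging. First, the coarser sum estimate $\zeta_{-\theta}\lesssim x^{-\theta}+y^{-\theta}$ would \emph{not} in fact lead to a quadratic-in-$M_{-\theta}$ inequality: the cross term $M_0 M_{1-\theta}$ it produces satisfies $M_0 M_{1-\theta}\le\varrho M_{-\theta}$ by H\"older interpolation between $M_{-\theta}$ and $M_1$, so the inequality is still linear; your $\max$-trick is a convenience that avoids invoking interpolation rather than a necessity. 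Second, the de la Vall\'ee Poussin function you mention enters through Lemma~\ref{lemb4} to control the large-$x$ tail (needed to upgrade $M_1(f(t))\le\varrho$ to equality), not the small-set uniform integrability, which is obtained directly by the $\mathcal{E}_{n,R}$ argument of Lemma~\ref{lemb5} driven by \eqref{p5}. Neither point affects the validity of the argument.
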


It is worth pointing out that, unlike Theorems~\ref{tha1} and ~\ref{tha2}, Theorem~\ref{tha4} does not require a positive lower bound on $E$ but only that it is non-negative. As in the case described in Remark~\ref{rema3}, it provides an existence result for the collision-induced breakage equation which corresponds to the choice $E\equiv 0$ in \eqref{cecb}. We refer to the companion paper \cite{GiLa} for a more complete study of this model.

We finally supplement the above existence results with the uniqueness of mass-conserving weak solutions to \eqref{cecb} having a finite second moment.

\begin{theorem}[Uniqueness]\label{tha5}
Let $\alpha\in [0,1/2)$ and assume that $K$, $b$, and $E$ satisfy \eqref{p1}, \eqref{p2}, \eqref{p40}, and \eqref{p7a}, respectively. Assume further that there is $B_{-\alpha}>1$ such that
\begin{equation}
\int_0^{\min\{ 1 , x+y\}} z^{-\alpha} b(z,x,y)\ dz \le B_{-\alpha} \left( \min\{1, x+y\} \right)^{-\alpha}\,, \qquad (x,y)\in (0,\infty)^2\,. \label{p500}
\end{equation}
Then, given an initial condition $f^{in}\in X_{-2\alpha}\cap X_1^+\cap X_2$, there is at most one weak solution $f$ to \eqref{cecb} satisfying \eqref{pa3}, \eqref{pa4}, \eqref{pa5} and such that $f\in L^\infty((0,T),X_{-2\alpha}\cap X_2)$ for each $T>0$.
\end{theorem}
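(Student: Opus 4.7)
The plan is a difference-and-Gronwall scheme. Let $f, g$ be two weak solutions with common initial data $f^{in}$, both in $L^\infty((0,T), X_{-2\alpha}\cap X_2)$ (and hence in $L^\infty_t X_1$ by mass conservation~\eqref{pa5}). Set $h := f - g$ and $\sigma(s,x) := \mathrm{sign}(h(s,x))$, so that $h(0) = 0$. For the weight $w(x) := (x \wedge 1)^{-\alpha} + x$, which is subadditive, bounded below by a positive constant, and tailored to \eqref{p500} via $(\min\{1, x+y\})^{-\alpha} \le w(y)$, I would consider the functional $U(t) := \int_0^\infty w(x)|h(t,x)|\, dx$ (finite on $[0,T]$ by the assumed moment bounds, using $X_0 \subset X_{-2\alpha}+X_1$) and aim to establish an integral inequality $U(t) \le C_T \int_0^t U(s)\, ds$, whence Gronwall's lemma gives $U \equiv 0$ and thus $f = g$.

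The first step is to derive this inequality from \eqref{pa3}. Since the natural test function $\phi(s,\cdot) = w \,\sigma(s, \cdot)$ depends on $s$, a regularization is needed: I would replace $\sigma$ by a smooth odd truncation $\sigma_n$ with primitive $\rho_n\to |\cdot|$, exploit the convex inequality $\rho_n(h(t))-\rho_n(h(s))\le \sigma_n(h(t))\,[h(t)-h(s)]$ together with \eqref{pa3} applied on a partition of $[0,t]$ (with the time-frozen test function $w\,\sigma_n(h(t_i))$), and pass to the limit first as the mesh vanishes and then $n\to\infty$, using dominated convergence justified by the moment bounds. Applying the symmetric factorization $2[f(x)f(y)-g(x)g(y)] = (f+g)(x)h(y)+h(x)(f+g)(y)$ and the $(x,y)$-symmetry of $\zeta_\phi$ and $K$ produces
\begin{equation*}
U(t) \le \tfrac{1}{2}\int_0^t \int_0^\infty \int_0^\infty \zeta_{w\sigma(s)}(x,y)\, K(x,y)\, h(s, x)\, (f+g)(s, y)\, dy\, dx\, ds.
\end{equation*}

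The heart of the proof is a pointwise bound on $\zeta_{w\sigma(s)}(x,y)\, h(s,x)$. Splitting $w = u + \mathrm{id}$ with $u(z) := (z\wedge 1)^{-\alpha}$ and invoking the linearity of $\zeta$ in its test function, the linear piece satisfies $\zeta_{\mathrm{id}\cdot\sigma(s)}(x,y)\, h(s,x) \le 2 y\, |h(s,x)|$, as a consequence of \eqref{p40b} (giving $\int z b\, dz = x+y$) and $|\sigma|\le 1$. For the singular piece, subadditivity of $u$ provides the coagulation-loss cancellation $E u(x+y) - u(x) \le 0$, while \eqref{p500} combined with the trivial bound $\int_1^{x+y} b(z,x,y)\, dz \le x+y$ (from $b\le zb$ for $z\ge 1$ and \eqref{p40b}) controls the fragment contribution as
\begin{equation*}
\int_0^{x+y} u(z)\, b(z,x,y)\, dz \le B_{-\alpha}(\min\{1,x+y\})^{-\alpha} + (x+y)\,\mathbf{1}_{x+y > 1}.
\end{equation*}
Using $(\min\{1,x+y\})^{-\alpha} \le u(y)$ and $|\sigma|\le 1$, one obtains $\zeta_{w\sigma(s)}(x,y)\, h(s,x) \le C\bigl[u(y) + y + (x+y)\mathbf{1}_{x+y>1}\bigr]\, |h(s,x)|$. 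Finally, combining \eqref{p1}--\eqref{p2} with the uniform bounds on $M_m(f+g)$ for $m\in\{-2\alpha, 0, 1, 2\}$, a routine case split $y\in (0,1)\cup [1,\infty)$ yields $\int_0^\infty K(x,y)\,[u(y)+y+(x+y)\mathbf{1}_{x+y>1}]\,(f+g)(s,y)\, dy \le C_T\, w(x)$ for $s\in [0,T]$, and Gronwall's lemma closes the argument.

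The hardest step is expected to be the pointwise estimate on $\zeta_{w\sigma(s)}(x,y)\,h(s,x)$: the fragment integral $\int_0^{x+y} u(z)\,b\,dz$ is controlled by \eqref{p500} only over $(0, \min\{1,x+y\})$, and the overspill on $(1, x+y)$ gives a stubborn $(x+y)\mathbf{1}_{x+y>1}$ summand that does not obviously decouple into a tensor form $\psi_1(x)\psi_2(y)$; absorbing it relies decisively on the second-moment integrability $f, g\in L^\infty_t X_2$ in the final $y$-integration. The time-dependent regularization in the first step is more technical than difficult, following well-established lines for this class of equations.
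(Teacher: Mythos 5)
Your overall Gronwall scheme is the same as the paper's, and the symmetric factorization plus the bound on the linear piece $\zeta_{\mathrm{id}\cdot\sigma}$ are fine. However, the pointwise estimate on $\zeta_{w\sigma(s)}$ has a genuine gap, and the repair you propose for the overspill term does not work.

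The problem is the term $(x+y)\mathbf{1}_{x+y>1}$ in your bound $\sigma(s,x)\zeta_{w\sigma(s)}(x,y) \le C\bigl[u(y)+y+(x+y)\mathbf{1}_{x+y>1}\bigr]$. In the region $x>1$, $0<y<1$ one has $x+y \sim x$ and $K(x,y)\le k_1 x y^{-\alpha}$, so $K(x,y)(x+y)\mathbf{1}_{x+y>1} \sim k_1 x^2 y^{-\alpha}$. The subsequent $y$-integration against $(f+g)(s,\cdot)$ then yields a contribution $\sim x^2 M_{-\alpha}((f+g)(s))$, which is of order $x^2$ and cannot be dominated by $C_T\,w(x)\sim C_T x$ for large $x$. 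The same obstruction appears in $(1,\infty)^2$ using \eqref{p2}. The point is that the troublesome $x^2$ factor sits in the $x$-variable, the one paired with $|h|$ — the second-moment bound $f,g\in L^\infty_t X_2$ lives in the variable paired with $(f+g)$ and cannot absorb it. Your inequality $\int_0^\infty K(x,y)\bigl[u(y)+y+(x+y)\mathbf{1}_{x+y>1}\bigr](f+g)(s,y)\,dy \le C_T\,w(x)$ is therefore false, and the Gronwall step does not close.

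The source of the overspill is twofold and both causes are needed to understand the paper's proof. First, the paper takes $w(x)=\max\{x^{-\alpha},x\}$, so that $w(z)=z$ for $z\ge 1$ and hence $\int_0^{x+y} w(z)b(z,x,y)\,dz = \int_0^1 z^{-\alpha}b\,dz+\int_1^{x+y}z\,b\,dz \le B_{-\alpha}+(x+y)$ contributes only a \emph{single} copy of $(x+y)$ when $x+y>1$. With your $w(z)=(z\wedge 1)^{-\alpha}+z$, the integral $\int w\,b\,dz$ contributes up to $2(x+y)$ (one from $\int u\,b\,dz$ and one from $\int z\,b\,dz$), which is too much. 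Second, the paper does not split $w$ into a singular and a linear piece; it estimates $W'(x,y)=E w(x+y)+(1-E)\int w\,b\,dz-w(x)+w(y)$ directly, so that the loss term $-w(x)=-x$ (for $x>1$) cancels the $x$ inside the single overspill $(x+y)$, leaving $W'(x,y)\le y+B_{-\alpha}+w(y)$ with no residual $x$-dependence. Your split of $w$ and the separate bounds $Eu(x+y)-u(x)\le 0$ and $\int z\,b\,dz=x+y$ discard precisely this cancellation and leave the stubborn $(x+y)\mathbf{1}_{x+y>1}$ uncontrolled. To make the argument work, switch to $w(x)=\max\{x^{-\alpha},x\}$, do not decompose $w$, and carry the $-w(x)$ term until after the fragment integral has been bounded; the rest of your Gronwall scheme then closes exactly as in the paper.
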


The proof of Theorem~\ref{tha5} relies on the control of the difference of two solutions to \eqref{cecb} in a suitable weighted $L^1$-space and follows the lines of the uniqueness proofs performed in \cite{EMRR05, Stew90b}. Let us also mention that $b_\nu$ defined in \eqref{bp1} satisfies \eqref{p500} for $\alpha\in [0,1/2)$ and $\nu>2\alpha-1$ with $B_{-\alpha} =(\nu+2)/(\nu+1-\alpha)$, while $\tilde{b}_\nu$ satisfies \eqref{p500} for $\alpha=0$ and $\nu>-1$ with $B_0 = 2(\nu+2)/(\nu+1)$.

\medskip

To illustrate the outcome of the above results which have been derived under rather general assumptions on $K$ and $b$, we end up this section with the existence and uniqueness of global mass-conserving weak solutions to \eqref{cecb} when the collision kernel is given by \eqref{ck} or \eqref{ckbg} and the daughter distribution function $b$ by \eqref{bp1} or \eqref{bp2}.  Let us begin with the sum/product collision kernel $K=K_{\zeta,\eta}$ defined in \eqref{ck}.

\begin{corollary}\label{cora7}
Consider $-1/2<\zeta \le \eta \le 1$ such that $\zeta+\eta\le 1$ and $\nu>2\alpha-1$, where $\alpha := \max\{0,-\zeta\}$. Let $f^{in}\in X_{-2\alpha}\cap X_1^+ \cap X_2$. There is a unique global mass-conserving weak solution to \eqref{cecb} in the following cases:
\begin{itemize}
	\item [(i)] $\zeta\ge 0$, $K=K_{\zeta,\eta}$, and, either $b=b_\nu$ and
	\begin{equation}
		E(x,y) \ge \max\{0,-\nu\}\,, \qquad (x,y)\in (0,1)^2\,, \label{e1}
	\end{equation}
or $b=\tilde{b}_\nu$ and 
\begin{equation}
	E(x,y) \ge \frac{2}{\nu+3}\,, \qquad (x,y)\in (0,1)^2\,; \label{e2}
\end{equation}
\item [(ii)] $\zeta<0$, $K=K_{\zeta,\eta}$, $b=b_\nu$, and
\begin{equation}
	E(x,y) \ge \max\left\{ 0 , \frac{\nu+2 - (\nu+1+2\zeta) 2^{1-2\zeta}}{1-2\zeta} \right\}\,, \qquad (x,y)\in (0,1)^2\,. \label{e3}
\end{equation}
\end{itemize}
\end{corollary}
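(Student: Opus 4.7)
The plan is to derive Corollary~\ref{cora7} directly from the existence theorems (Theorem~\ref{tha2b}(c) in case (i) and Theorem~\ref{tha1}(c) in case (ii)) together with the uniqueness statement of Theorem~\ref{tha5}, by checking their hypotheses for $K = K_{\zeta,\eta}$ and $b \in \{b_\nu, \tilde{b}_\nu\}$. Most of the needed structural conditions on $b$ --- namely \eqref{p5}, \eqref{p4}, \eqref{p6}, \eqref{paa4}, and \eqref{p500} --- have already been verified in the remarks following the statements of those theorems, together with explicit expressions for $\beta_0$, $\beta_{-2\alpha}$, $\beta_{-\theta}'$, and $B_{-\alpha}$. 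What remains is therefore modest: checking the kernel growth condition \eqref{p2}, and converting the general lower bound \eqref{p7b} into the three explicit thresholds \eqref{e1}--\eqref{e3}.

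First I would verify \eqref{p2}. The bound \eqref{p1} with $\alpha = \max\{-\zeta, 0\}$ is already noted in the introduction, so only the behaviour on $(1,\infty)^2$ needs to be examined. When $\zeta > 0$, I would apply Young's inequality with conjugate exponents $(\zeta+\eta)/\zeta$ and $(\zeta+\eta)/\eta$ to obtain
\begin{equation*}
x^\zeta y^\eta \le \frac{\zeta}{\zeta+\eta} x^{\zeta+\eta} + \frac{\eta}{\zeta+\eta} y^{\zeta+\eta} \le x + y\,,
\end{equation*}
using $\zeta+\eta \le 1$ and $x,y > 1$. When $\zeta \le 0$, the simpler bound $x^\zeta y^\eta \le y \le x+y$ works since $x^\zeta \le 1$ and $y^\eta \le y$. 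The symmetric term $x^\eta y^\zeta$ is handled identically, giving $K_{\zeta,\eta}(x,y) \le 2(x+y)$ on $(1,\infty)^2$, which is \eqref{p2}.

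Next I would extract the thresholds on $E$. Substituting the formulas $\beta_0 = (\nu+2)/(\nu+1)$ for $b_\nu$, $\beta_0 = 2(\nu+2)/(\nu+1)$ for $\tilde{b}_\nu$ (case (i), where $\alpha=0$), and $\beta_{-2\alpha} = (\nu+2)/(\nu+1-2\alpha)$ for $b_\nu$ (case (ii), where $\alpha=-\zeta$) into the quantity $(\beta_{-2\alpha} - 2^{1+2\alpha})/(\beta_{-2\alpha} - 1)$ appearing in \eqref{p7b}, a direct calculation returns $-\nu$, $2/(\nu+3)$, and $[\nu+2 - (\nu+1+2\zeta)\, 2^{1-2\zeta}]/(1-2\zeta)$, respectively; these are precisely the right-hand sides of \eqref{e1}, \eqref{e2}, and \eqref{e3}, so that the assumed lower bounds on $E$ imply \eqref{p7b}.

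Finally, I would invoke the appropriate existence theorem --- Theorem~\ref{tha2b}(c) in case (i), noting that $b_\nu$ satisfies \eqref{paa4a} and $\tilde{b}_\nu$ satisfies \eqref{paa4}, and Theorem~\ref{tha1}(c) in case (ii) --- to produce a global mass-conserving weak solution with $f \in L^\infty((0,T),X_2)$, and then Theorem~\ref{tha5}, whose hypothesis \eqref{p500} is known to hold for both $b_\nu$ and $\tilde{b}_\nu$, to deduce uniqueness. No step presents a genuine obstacle; the most delicate piece is simply keeping track of $\alpha = -\zeta$ throughout the case (ii) arithmetic.
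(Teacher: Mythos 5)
Your proposal is correct and follows exactly the route the paper itself takes: the paper's justification (immediately after the corollary statement) is that "the existence statement... readily follows from Theorem~\ref{tha2b}~(b)-(c) and Theorem~\ref{tha1}~(b)-(c), respectively, while the uniqueness assertion is a consequence of Theorem~\ref{tha5}." You have merely filled in the routine verifications (the bound \eqref{p2} via Young's inequality and the arithmetic converting \eqref{p7b} into \eqref{e1}--\eqref{e3}) that the paper leaves implicit.
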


We point out that the right-hand side of \eqref{e3} is always positive in the physically relevant case $\nu\in (-1,0]$. The existence statement in Corollary~\ref{cora7}~(i) and~(ii) readily follows from Theorem~\ref{tha2b}~(b)-(c) and Theorem~\ref{tha1}~(b)-(c), respectively, while the uniqueness assertion is a consequence of Theorem~\ref{tha5}. 

A similar result is obtained in the same way for the collision kernel $K=\tilde{K}_{\sigma,\eta}$ defined in \eqref{ckbg} and considered in \cite{BaGib}.

\begin{corollary}\label{cora8}
	Consider $\sigma\in [0,1)$, $\eta\in [0,(\sigma+1)/2]$, and $\nu>\sigma-1$. Let $f^{in}\in X_{-\sigma}\cap X_1^+ \cap X_2$. There is a unique global mass-conserving weak solution to \eqref{cecb} in the following cases:
	\begin{itemize}
		\item [(i)] $\sigma= 0$, $K=\tilde{K}_{0,\eta}$, and, either $b=b_\nu$ and $E$ satisfies \eqref{e1}, or $b=\tilde{b}_\nu$ and $E$ satisfies \eqref{e2};
		\item [(ii)] $\sigma<0$, $K=\tilde{K}_{\sigma,\eta}$, $b=b_\nu$, and
		\begin{equation}
			E(x,y) \ge \max\left\{ 0 , \frac{\nu+2 - (\nu+1-\sigma) 2^{1+\sigma}}{1+\sigma} \right\}\,, \qquad (x,y)\in (0,1)^2\,. \label{e4}
		\end{equation}
	\end{itemize}
\end{corollary}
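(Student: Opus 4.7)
The plan is to derive Corollary~\ref{cora8} as a direct application of the previously stated theorems: Theorem~\ref{tha2b}(b)--(c) in case (i), Theorem~\ref{tha1}(b)--(c) in case (ii), and Theorem~\ref{tha5} for uniqueness in both cases. Thus the work reduces to verifying that $\tilde{K}_{\sigma,\eta}$, the daughter distribution functions $b_\nu$ and $\tilde{b}_\nu$, and the lower bounds prescribed on $E$ satisfy the hypotheses of those theorems with $\alpha = \sigma/2$.

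First, $\tilde{K}_{\sigma,\eta}$ satisfies \eqref{p1} with $\alpha = \sigma/2$, as already observed in the introduction. To check \eqref{p2}, I use the inequalities $(1+x)(1+y) \le 4xy \le (x+y)^2$ valid for $(x,y)\in (1,\infty)^2$, which yield
\[
\tilde{K}_{\sigma,\eta}(x,y) \le (x+y)^{2\eta - \sigma} \le x+y,
\]
since $x+y \ge 2$ and the assumption $\eta \le (\sigma+1)/2$ gives $2\eta - \sigma \le 1$. Regarding the daughter distribution function, the computations recorded in the introduction show that $b_\nu$ with $\nu > \sigma - 1 = 2\alpha - 1$ satisfies \eqref{p5} and \eqref{p4} with $\beta_0 = (\nu+2)/(\nu+1)$ when $\sigma = 0$, and \eqref{p5} and \eqref{p6} with $\beta_{-\sigma} = (\nu+2)/(\nu+1-\sigma)$ when $\sigma \in (0,1)$. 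The same $b_\nu$ also satisfies the stronger condition \eqref{paa4a} with $\beta_{-\theta}' = 2(\nu+2)/(\nu+1-\theta)$, as well as \eqref{p500} with $B_{-\alpha} = (\nu+2)/(\nu+1-\alpha)$, as noted after Theorem~\ref{tha5}. When $\tilde{b}_\nu$ enters (case (i), $\sigma = 0$), the introduction gives \eqref{p5} and \eqref{p4} with $\beta_0 = 2(\nu+2)/(\nu+1)$; moreover $\tilde{b}_\nu$ admits the decomposition \eqref{paa4b}--\eqref{paa4d} with $\bar{b}(z,x,y) = (\nu+2) z^\nu x^{-\nu-1} \mathbf{1}_{(0,x)}(z)$ and $\beta_{-\theta}' = 2(\nu+2)/(\nu+1-\theta)$, and satisfies \eqref{p500} with $B_0 = 2(\nu+2)/(\nu+1)$.

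It then remains to show that the lower bounds \eqref{e1}, \eqref{e2}, and \eqref{e4} on $E$ coincide with the right-hand side of \eqref{p7b} for the three pairs $(\alpha, \beta_{-2\alpha})$ just identified. A direct computation yields, for $b_\nu$ with $\alpha = 0$, $(\beta_0 - 2)/(\beta_0 - 1) = -\nu$; for $\tilde{b}_\nu$ with $\alpha = 0$, $(\beta_0 - 2)/(\beta_0 - 1) = 2/(\nu+3)$; and for $b_\nu$ with $\alpha = \sigma/2 > 0$, substitution of $\beta_{-\sigma} = (\nu+2)/(\nu+1-\sigma)$ into $(\beta_{-\sigma} - 2^{1+\sigma})/(\beta_{-\sigma} - 1)$ produces $(\nu + 2 - (\nu+1-\sigma) 2^{1+\sigma})/(1+\sigma)$. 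Once these identifications are made, Theorem~\ref{tha2b}(b)--(c) (respectively Theorem~\ref{tha1}(b)--(c)) delivers a global mass-conserving weak solution with the $X_{-\sigma}\cap X_2$-integrability required to invoke Theorem~\ref{tha5}, whence uniqueness. The main obstacle is purely computational, namely the arithmetic threshold comparisons in the last step; nothing analytically new is required beyond what is already assembled in the preceding theorems.
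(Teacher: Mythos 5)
Your proposal is correct and matches the paper's intent exactly: the authors dismiss Corollary~\ref{cora8} with the remark that it ``is obtained in the same way'' as Corollary~\ref{cora7}, i.e., by applying Theorem~\ref{tha2b}~(b)--(c) (resp.\ Theorem~\ref{tha1}~(b)--(c)) for existence and Theorem~\ref{tha5} for uniqueness, and you have supplied precisely the verifications (in particular the bound $\tilde{K}_{\sigma,\eta}(x,y)\le (x+y)^{2\eta-\sigma}\le x+y$ on $(1,\infty)^2$ giving \eqref{p2}, and the algebra identifying \eqref{p7b} with \eqref{e1}, \eqref{e2}, \eqref{e4}) that the paper leaves implicit. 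Note in passing that the paper's statement of case~(ii) has a typo, reading ``$\sigma<0$'' where ``$\sigma>0$'' is clearly meant given $\sigma\in[0,1)$ and $\alpha=\sigma/2$; you silently read it the right way.
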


Let us now turn to global weak solutions and apply Theorem~\ref{tha1}~(a) and Theorem~\ref{tha2b}~(a), first to the sum/product collision kernel $K=K_{\zeta,\eta}$ defined in \eqref{ck}.

\begin{corollary}\label{cora9}
	Consider $-1/2<\zeta \le \eta < 1$ and $\nu>2\alpha-1$, where $\alpha := \max\{0,-\zeta\}$. Let $f^{in}\in X_{-2\alpha}\cap X_1^+$. There is a global weak solution to \eqref{cecb} in the following cases:
	\begin{itemize}
		\item [(i)] $\zeta\ge 0$, $K=K_{\zeta,\eta}$, and, either $b=b_\nu$ and $E$ satisfies \eqref{e1}, or $b=\tilde{b}_\nu$ and $E$ satisfies \eqref{e2};
		\item [(ii)] $\zeta<0$, $K=K_{\zeta,\eta}$, $b=b_\nu$, and $E$ satisfies \eqref{e3}.
	\end{itemize}
\end{corollary}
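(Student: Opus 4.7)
The strategy is to directly invoke Theorem~\ref{tha2b}(a) in case~(i), where $\alpha=0$, and Theorem~\ref{tha1}(a) in case~(ii), where $\alpha=-\zeta\in (0,1/2)$. The bulk of the work reduces to checking that the chosen $K$, $b$, and $E$ fulfil the hypotheses of the relevant existence theorem, and essentially all of these verifications have already been performed in Section~\ref{sec1}.

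For the collision kernel, $K_{\zeta,\eta}$ with $-1/2<\zeta\le\eta<1$ is already known to satisfy \eqref{p1} with $\alpha=\max\{0,-\zeta\}$ and to satisfy \eqref{p3} since $\eta<1$ (for instance with $r(x)=1+x^\eta$). For the daughter distribution functions, the paper has already checked that, in case~(ii), $b_\nu$ with $\nu>2\alpha-1$ fulfils \eqref{p40}, \eqref{p5}, and \eqref{p6}, and that, in case~(i), $b_\nu$ with $\nu>-1$ fulfils \eqref{p40}, \eqref{p5}, \eqref{p4a}, and the stronger condition \eqref{paa4a}, while $\tilde{b}_\nu$ with $\nu>-1$ fulfils \eqref{p40}, \eqref{p5}, \eqref{p4a}, and \eqref{paa4}; the explicit values of $\beta_0$, $\beta_{-2\alpha}$, and $\beta_{-\theta}'$ are also recorded there.

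The only nontrivial remaining point is to verify that the assumed lower bounds \eqref{e1}--\eqref{e3} on $E$ do reduce to the general condition \eqref{p7b}. For this I plan to evaluate the quantity
\[
\max\left\{0,\ \frac{\beta_{-2\alpha}-2^{1+2\alpha}}{\beta_{-2\alpha}-1}\right\}
\]
in each case by inserting the appropriate value of $\beta_{-2\alpha}$ recorded above. In case~(i) with $b=b_\nu$ one has $\beta_0=(\nu+2)/(\nu+1)$, and a direct calculation yields $\max\{0,-\nu\}$, matching \eqref{e1}; with $b=\tilde{b}_\nu$ one has $\beta_0=2(\nu+2)/(\nu+1)$, which gives $2/(\nu+3)$, matching \eqref{e2}. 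In case~(ii) one has $\alpha=-\zeta>0$ and $\beta_{-2\alpha}=(\nu+2)/(\nu+1+2\zeta)$, and a short algebraic reduction yields
\[
\max\left\{0,\ \frac{\nu+2-2^{1-2\zeta}(\nu+1+2\zeta)}{1-2\zeta}\right\},
\]
which matches \eqref{e3}.

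Once all hypotheses are in place, the existence of a global weak solution in the sense of \eqref{pa1}--\eqref{pa3} follows immediately from the selected existence theorem. There is no substantive obstacle here: the corollary is essentially a concrete instantiation of Theorems~\ref{tha1} and~\ref{tha2b}, and the most delicate step is merely the algebraic reduction of the threshold on $E$ in case~(ii) into the closed form stated in \eqref{e3}.
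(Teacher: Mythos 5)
Your proposal is correct and follows exactly the paper's line of argument: case (i) is a direct application of Theorem~\ref{tha2b}(a) with $\alpha=0$, case (ii) of Theorem~\ref{tha1}(a) with $\alpha=-\zeta$, and the algebraic reduction of the threshold~\eqref{p7b} to \eqref{e1}--\eqref{e3} using the explicit values of $\beta_0$ and $\beta_{-2\alpha}$ recorded in Section~\ref{sec1} is carried out just as the paper intends. One small slip: your parenthetical choice $r(x)=1+x^\eta$ does not quite give $K_{\zeta,\eta}(x,y)\le x^{-\alpha}r(y)$ on $(0,1)\times(1,\infty)$ when $0<\zeta\le\eta$ (as $x\to 1^-$ one obtains $K_{\zeta,\eta}\to 2y^\eta>1+y^\eta$), but taking for instance $r(x)=2(1+x^\eta)$ repairs this without affecting~\eqref{p3b}.
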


When $b=b_\nu$, Corollary~\ref{cora9} extends the existence result in \cite{BaGia} to a broader class of collision kernels, including the case $\zeta\in (-1/2,0]$ which was not handled there. The case $b=\tilde{b}_\nu$ is not considered in \cite{BaGia} and thus seems to be new.

We end up with the global existence of weak solutions to \eqref{cecb} when the collision kernel $K=\tilde{K}_{\sigma,\eta}$ is defined in \eqref{ckbg}.

\begin{corollary}\label{cora10}
	Consider $\sigma\in [0,1)$, $\eta\in [0,(\sigma+2)/2)$, and $\nu>\sigma-1$. Let $f^{in}\in X_{-\sigma}\cap X_1^+$. There is a global weak solution to \eqref{cecb} in the following cases:
	\begin{itemize}
		\item [(i)] $\sigma= 0$, $K=\tilde{K}_{0,\eta}$, and, either $b=b_\nu$ and $E$ satisfies \eqref{e1}, or $b=\tilde{b}_\nu$ and $E$ satisfies \eqref{e2};
		\item [(ii)] $\sigma<0$, $K=\tilde{K}_{\sigma,\eta}$, $b=b_\nu$, and $E$ satisfies \eqref{e4}.
	\end{itemize}
\end{corollary}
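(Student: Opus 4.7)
The plan is to reduce the corollary to direct applications of Theorem~\ref{tha1}(a) in case~(ii), reading the hypothesis there as $\sigma\in (0,1)$, so that $\alpha := \sigma/2\in (0,1/2)$, and of Theorem~\ref{tha2b}(a) in case~(i), where $\alpha = 0$. All that is required is to check the hypotheses on $\tilde{K}_{\sigma,\eta}$, on the daughter distribution function $b$, and on $E$, and then to match the integrability space for the initial datum.

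First I would verify the kernel hypotheses. The estimates
\[
(x+y)^{-\sigma}\le 2^{-\sigma}(xy)^{-\sigma/2} \qquad \text{and} \qquad (1+z)^\eta \le 2^\eta \max\{1,z^\eta\}
\]
(the first from AM--GM) yield \eqref{p1} with $\alpha=\sigma/2$ without difficulty. The same two estimates, combined with the assumption $\eta<(\sigma+2)/2$, which forces $(2\eta-\sigma)/2<1$, give \eqref{p3} with $r(x):= c\,\max\{1,x^{(2\eta-\sigma)/2}\}$ for a suitable constant $c=c(\sigma,\eta)$: indeed $r(x)/x\to 0$ as $x\to\infty$ and $\sup_{x>0} r(x)/(1+x)<\infty$, which is precisely \eqref{p3b}.

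Next I would invoke the computations displayed in the paper just after the statement of \eqref{p7}: they show that $b_\nu$ with $\nu>2\alpha-1=\sigma-1$ satisfies \eqref{p5} and \eqref{p6} when $\alpha>0$, and that both $b_\nu$ and $\tilde{b}_\nu$ with $\nu>-1$ satisfy \eqref{p5} and \eqref{p4} when $\alpha=0$. For Theorem~\ref{tha2b}(a) in case~(i), direct integration further shows that $b_\nu$ satisfies the stronger \eqref{paa4a}, while $\tilde{b}_\nu$ satisfies \eqref{paa4} with $\bar{b}$ read off from its definition, both with $\beta_{-\theta}'=2(\nu+2)/(\nu+1-\theta)$. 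With these explicit values of $\beta_{-2\alpha}$ (or $\beta_0$) in hand, a short algebraic calculation identifies the right-hand side of the threshold \eqref{p7b}: for $b=b_\nu$ and $\alpha=\sigma/2$ one has $\beta_{-2\alpha}=(\nu+2)/(\nu+1-\sigma)$, hence
\[
\frac{\beta_{-2\alpha}-2^{1+2\alpha}}{\beta_{-2\alpha}-1} = \frac{(\nu+2)-(\nu+1-\sigma)\,2^{1+\sigma}}{1+\sigma}\,,
\]
recovering \eqref{e4}, with analogous reductions to $-\nu$ and $2/(\nu+3)$ in the two subcases of case~(i), recovering \eqref{e1} and \eqref{e2} respectively.

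Finally, the initial-condition space $X_{-\sigma}\cap X_1^+$ coincides with the space $X_{-2\alpha}\cap X_1^+$ required by Theorem~\ref{tha1}(a) when $\sigma>0$, and with $X_0\cap X_1^+$ required by Theorem~\ref{tha2b}(a) when $\sigma=0$. All hypotheses being verified, Theorems~\ref{tha1}(a) and \ref{tha2b}(a) supply the desired global weak solution in cases~(ii) and~(i) respectively. Since the entire argument is a verification of hypotheses already treated abstractly in the paper, no substantive obstacle arises; the only items deserving care are the uniformity of the kernel bound leading to the choice of $r$ and the bookkeeping of the algebraic identifications linking \eqref{p7b} to each of \eqref{e1}, \eqref{e2}, and \eqref{e4}.
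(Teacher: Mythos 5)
Your proposal is correct and follows essentially the same route the paper takes: the text preceding the corollary explicitly says these results follow by applying Theorem~\ref{tha1}(a) (for $\sigma>0$, with $\alpha=\sigma/2$) and Theorem~\ref{tha2b}(a) (for $\sigma=0$), and the paper already records that $\tilde{K}_{\sigma,\eta}$ satisfies \eqref{p1} and \eqref{p3} with these choices and that $b_\nu$, $\tilde{b}_\nu$ satisfy the requisite conditions with the stated values of $\beta_{-2\alpha}$, $\beta_0$, $\beta'_{-\theta}$. Your algebraic reduction of \eqref{p7b} to \eqref{e1}, \eqref{e2}, and \eqref{e4}, your verification that $\eta<(\sigma+2)/2$ yields $(2\eta-\sigma)/2<1$ so that \eqref{p3b} holds, and your correct reading of the typographical ``$\sigma<0$'' as $\sigma\in(0,1)$ all match the intended argument.
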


Corollary~\ref{cora10}~(ii) is already obtained in \cite{BaGib} under the additional constraint  $\eta\in [\sigma,\sigma+1)\cap [0,1)$ which we relax here. Corollary~\ref{cora10}~(i) seems to be new.

\section{Existence}\label{sec2}

We consider a collision kernel $K$ satisfying \eqref{p1} for some $\alpha\in [0,1/2)$, a daughter distribution function $b$ satisfying \eqref{p40}, and a coagulation probability $E$ satisfying \eqref{p7a}. We also fix a non-negative initial condition 
\begin{equation}
f^{in}\in X_{-2\alpha}\cap X_1^+\,, \label{pi1}
\end{equation}
and set
\begin{equation}
\varrho := M_1(f^{in})\,. \label{pi2}
\end{equation}

For $n\ge 1$, we define
\begin{equation}
K_n(x,y) := \min\{ n , K(x,y)\} \mathbf{1}_{(0,n)}(x+y)\,, \qquad (x,y)\in (0,\infty)^2\,, \label{p20a}
\end{equation}
and
\begin{equation}
f_n^{in} = f^{in} \mathbf{1}_{(0,n)}\,. \label{p20b}
\end{equation}

The starting point of the proof of Theorems~\ref{tha1} and~\ref{tha2} is the well-posedness of the coagulation equation with collisional breakage \eqref{cecb} with $K_n$ instead of $K$. More precisely, we look for a solution $f_n$ to
\begin{subequations}\label{cecbt}
	\begin{align}
	\partial_t f_n(t,x) & = \mathcal{B}_{c,n} f_n(t,x) + \mathcal{B}_{b,n} f_n(t,x) - \mathcal{D}_n f_n(t,x)\,, \qquad (t,x)\in (0,\infty)\times (0,n)\,, \label{cecbt1} \\
	f_n(0,x) & = f_n^{in}(x) \,, \qquad x\in (0,n)\,, \label{cecbt2}
	\end{align}
\end{subequations}
where
\begin{subequations}\label{rt}
\begin{equation}
\mathcal{B}_{c,n} f(x) := \frac{1}{2} \int_0^x E(x-y,y) K_n(x-y,y) f(x-y) f(y)\ dy\,,  \label{bct}
\end{equation}
\begin{equation}
\mathcal{B}_{b,n} f(x) := \frac{1}{2} \int_x^n \int_0^y b(x,y-z,z) (1-E(y-z,z)) K_n(y-z,z) f(y-z) f(z)\ dzdy\,, \label{bdt}
\end{equation}
and
\begin{equation}
\mathcal{D}_n f(x) := \int_0^{n-x} K_n(x,y) f(x) f(y)\ dy \label{dt}
\end{equation}
for $x\in (0,n)$. 
\end{subequations}

\begin{proposition}\label{propb1} \cite{Walk02}
	Let $n\ge 1$. There is a unique non-negative solution $f_n\in C^1([0,T_n),L^1(0,n))$ to \eqref{cecbt} defined up to some maximal existence time $T_n\in (0,\infty]$. In addition, if $T_n<\infty$, then
	\begin{equation}
	\lim_{t\to T_n} \int_0^n f_n(t,x)\ dx = \infty\,. \label{p200}
	\end{equation}
	Furthermore, 
	\begin{equation}
	\int_0^n x f_n(t,x)\ dx = \int_0^n x f_n^{in}(x)\ dx \le \varrho\,, \qquad t\in [0,T_n)\,. \label{p21}
	\end{equation}
\end{proposition}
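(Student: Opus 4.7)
My plan is to treat \eqref{cecbt} as an ODE in the Banach space $L^1(0,n)$ and apply a Banach fixed-point argument, in the spirit of Walker \cite{Walk02}. Since $K_n\le n$ is supported in $\{x+y<n\}$, and since $\int_0^{x+y} b(z,x,y)\,dz$ is uniformly bounded under either \eqref{p4a} (case $\alpha=0$) or \eqref{p6d} (case $\alpha\in(0,1/2)$), a routine Fubini computation shows that $\mathcal{B}_{c,n}$, $\mathcal{B}_{b,n}$ and $\mathcal{D}_n$ map $L^1(0,n)$ into itself with a quadratic bound in $\|f\|_{L^1(0,n)}$, and that they are locally Lipschitz on bounded balls of this space. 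A standard Picard iteration therefore yields a unique solution $f_n\in C^1([0,\tau_n],L^1(0,n))$ on a short interval whose length depends only on $\|f_n^{in}\|_{L^1(0,n)}$; the usual continuation argument then extends $f_n$ to a maximal time $T_n\in(0,\infty]$ and enforces the blow-up alternative \eqref{p200}.

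Non-negativity is obtained by rewriting \eqref{cecbt1} in Duhamel form. Setting $A_n(t,x):=\int_0^{n-x}K_n(x,y)f_n(t,y)\,dy$, one has
\[
f_n(t,x) = f_n^{in}(x)\,e^{-\int_0^t A_n(s,x)\,ds} + \int_0^t e^{-\int_s^t A_n(\sigma,x)\,d\sigma}\left(\mathcal{B}_{c,n}+\mathcal{B}_{b,n}\right)f_n(s,x)\,ds.
\]
If the fixed-point iteration is run directly on this integral equation starting from $f_n^{in}\ge 0$, every iterate is non-negative (the exponentials lie in $(0,1]$, while $\mathcal{B}_{c,n}$ and $\mathcal{B}_{b,n}$ preserve positivity), so the limit $f_n$ inherits the property throughout $[0,T_n)$.

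For the mass identity \eqref{p21}, multiply \eqref{cecbt1} by $x$ and integrate over $(0,n)$. The substitution $(x,y)\mapsto(u,v)=(x-y,y)$ yields
\[
\int_0^n x\,\mathcal{B}_{c,n}f_n(x)\,dx = \frac{1}{2}\int_0^n\int_0^{n-u}(u+v)E(u,v)K_n(u,v)f_n(u)f_n(v)\,dv\,du,
\]
while Fubini on the region $\{0<x<y<n,\,0<z<y\}$, together with the change of variables $u=y-z$, collapses the inner $x$-integral to $\int_0^{u+z}x\,b(x,u,z)\,dx = u+z$ by \eqref{p40b}, producing
\[
\int_0^n x\,\mathcal{B}_{b,n}f_n(x)\,dx = \frac{1}{2}\int_0^n\int_0^{n-u}(u+z)(1-E(u,z))K_n(u,z)f_n(u)f_n(z)\,dz\,du.
\]
Adding these, the factor $E$ drops out, and after symmetrising $K_n$ the sum matches $\int_0^n x\,\mathcal{D}_n f_n(x)\,dx$ exactly; hence $\frac{d}{dt}\int_0^n x f_n(t,x)\,dx=0$, and the upper bound $M_1(f_n^{in})\le\varrho$ is immediate from $f_n^{in}\le f^{in}$.

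The only step requiring real care is this mass identity, where the Fubini exchanges must be organised so that the $E$-weighted coagulation source and the $(1-E)$-weighted breakage source combine, via \eqref{p40b}, into precisely the coagulation sink $\mathcal{D}_n$; all the other ingredients (boundedness and local Lipschitz estimates for the three operators, continuation, Duhamel positivity) are classical and already fit the framework of \cite{Walk02}.
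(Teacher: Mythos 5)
The paper disposes of this proposition entirely by citation to Walker~\cite{Walk02}: existence, uniqueness, non-negativity, and the blowup alternative come from Theorems~2.2 and~2.4 there, and the mass identity from Theorem~2.8. You instead reconstruct a proof sketch from scratch. Your outline is sound and follows the same mathematical route Walker's theorems encode: boundedness and compact support of $K_n$ make the three operators quadratic maps of $L^1(0,n)$ into itself with bounded-set Lipschitz constants, so Picard iteration gives local well-posedness, continuation gives the maximal time and the $L^1$ blowup criterion, and the Duhamel rewriting of the loss term yields non-negativity; the mass identity then follows from the change of variables $(x,y)\mapsto(u,v)=(x-y,y)$ in $\mathcal{B}_{c,n}$, Fubini plus \eqref{p40b} in $\mathcal{B}_{b,n}$, and symmetrisation of $\mathcal{D}_n$. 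What citing Walker buys the paper is avoiding the (slightly delicate) verification that the Picard iteration can be run on the Duhamel form with the exponential weight $e^{-\int A_n}$ while keeping the contraction property, and that the mass computation can be made rigorous for the $C^1([0,T_n),L^1)$ solution; your sketch waves through these but the details are standard and do go through.

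One small point worth flagging: when Proposition~3.1 is stated, the standing hypothesis on $b$ is only \eqref{p40}, which does not by itself bound $\int_0^{x+y} b(z,x,y)\,dz$; you invoke \eqref{p4a} or \eqref{p6d} to get the $L^1\to L^1$ bound on $\mathcal{B}_{b,n}$. This is harmless because one of these is in force in every subsection where the proposition is used (and a bound of this type is also required by Walker's hypotheses), but strictly speaking the needed assumption should be made explicit rather than inherited implicitly from later sections.
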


\begin{proof}
	Since $K_n$ is bounded and supported in $(0,n)^2$ by \eqref{p1} and \eqref{p20a}, the existence and uniqueness of a non-negative strong solution $f_n\in C^1([0,T_n),L^1(0,n))$ to \eqref{cecbt} and the blowup property \eqref{p200} readily follow from \cite[Theorems~2.2 and~2.4]{Walk02}. As for the conservation of mass \eqref{p21}, it is a consequence of \eqref{pi1}, \eqref{p20b}, and \cite[Theorem~2.8]{Walk02}.
\end{proof}

We extend $f_n$ to $(0,T_n)\times (0,\infty)$ by setting $f_n(t,x)=0$ for $(t,x)\in (0,T_n)\times (n,\infty)$, still denoting this extension by $f_n$. It easily follows from \eqref{p20a}, \eqref{cecbt}, \eqref{rt}, and Proposition~\ref{propb1} that
\begin{equation}
\frac{d}{dt} \int_0^\infty \phi(x) f_n(t,x)\ dx = \frac{1}{2} \int_0^\infty \int_0^\infty \zeta_\phi(x,y) K_n(x,y) f_n(t,x) f_n(t,y)\ dydx \label{p9}
\end{equation}
for all $t\in (0,T_n)$ and $\phi\in L^\infty(0,\infty)$.

\bigskip

Throughout this section, $C$ and $(C_i)_{i\ge 1}$ are positive constants depending only on $K$, $b$, $E$, and $f^{in}$. Dependence upon additional parameters is indicated explicitly.

\subsection{Existence: $\alpha\in (0,1/2)$}\label{sec2.1}

\newcounter{NCEx}

In this section, besides \eqref{p1}, \eqref{p40}, and  \eqref{p7}, we further assume that $\alpha\in (0,1/2)$, while the daughter distribution function $b$ satisfies \eqref{p5} and \eqref{p6}. 

\begin{lemma}\label{lemb3}
\refstepcounter{NCEx}\label{cst1} For all $n\ge 1$, $T_n=\infty$ and, for any $T>0$, there is $C_{\ref{cst1}}(T)>0$ such that 
\begin{align}
M_{-2\alpha}(f_n(t)) & \le C_{\ref{cst1}}(T)\,, \qquad t\in [0,T]\,, \ n\ge 1 \,, \label{p25a} \\
M_{-\alpha}(f_n(t)) & \le C_{\ref{cst1}}(T)\,, \qquad t\in [0,T]\,, \ n\ge 1\,. \label{p25b}
\end{align}
\end{lemma}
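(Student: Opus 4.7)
The plan is to test the weak formulation \eqref{p9} with $\phi(x)=x^{-2\alpha}$, derive a Gronwall-type inequality for $M_{-2\alpha}(f_n(t))$, and then use the resulting a priori bound to obtain \eqref{p25b} and to preclude finite-time blow-up via \eqref{p200}. Inserting \eqref{p6a} into the expression of $\zeta_\phi$ gives the pointwise bound
\[
\zeta_\phi(x,y) \le c(x,y)(x+y)^{-2\alpha} - x^{-2\alpha} - y^{-2\alpha}, \qquad c(x,y) := \beta_{-2\alpha} - E(x,y)(\beta_{-2\alpha}-1).
\]
On $(0,1)^2$ the dominance hypothesis \eqref{p7b} yields $c(x,y) \le 2^{1+2\alpha}$, and combining this with the AM-GM inequalities $(x+y)^{-2\alpha} \le 2^{-2\alpha}(xy)^{-\alpha}$ and $x^{-2\alpha}+y^{-2\alpha} \ge 2(xy)^{-\alpha}$ produces the crucial cancellation
\[
\zeta_\phi(x,y) \le \bigl[c(x,y)\cdot 2^{-2\alpha} - 2\bigr](xy)^{-\alpha} \le 0 \qquad \text{for } (x,y)\in (0,1)^2.
\]
Outside $(0,1)^2$, where $x+y\ge 1$, the crude estimate $\zeta_\phi(x,y) \le \beta_{-2\alpha}$ is enough.

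Splitting the integration in \eqref{p9} accordingly, the bound on $\zeta_\phi$ together with the kernel growth \eqref{p1} yields, after using $x^{-\alpha}\le x^{-2\alpha}$ on $(0,1)$ and the mass bound \eqref{p21}, a differential inequality of the form
\[
\frac{d}{dt} M_{-2\alpha}(f_n(t)) \le C_1 M_{-2\alpha}(f_n(t)) + C_2 \qquad \text{on } [0, T_n),
\]
where $C_1, C_2$ depend only on $k_1$, $\beta_{-2\alpha}$, and $\varrho$. Since $M_{-2\alpha}(f_n^{in}) \le M_{-2\alpha}(f^{in}) < \infty$ by \eqref{p20b} and \eqref{pi1}, Gronwall's lemma furnishes \eqref{p25a} on $[0, T_n)$. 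Writing $M_{-\alpha}(f_n) \le M_{-2\alpha}(f_n\mathbf{1}_{(0,1)}) + M_1(f_n\mathbf{1}_{(1,\infty)})$ and invoking \eqref{p25a} and \eqref{p21} then delivers \eqref{p25b}. The same splitting bounds $M_0(f_n(t)) \le M_{-\alpha}(f_n(t)) + M_1(f_n(t))$ uniformly on $[0,T]\cap [0,T_n)$, so the blow-up criterion \eqref{p200} rules out $T_n<\infty$, giving $T_n=\infty$ and extending all bounds to $[0,T]$.

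The main technical obstacle is that $x\mapsto x^{-2\alpha}$ does not belong to $L^\infty(0,\infty)$, so \eqref{p9} cannot be applied directly with this choice of $\phi$. This is handled by the standard device of testing against the bounded truncations $\phi_R(x) := x^{-2\alpha}\mathbf{1}_{(1/R,\infty)}(x)$, establishing the analogous differential inequality for the corresponding truncated moment, and passing to the limit $R\to\infty$ by monotone convergence on the left-hand side and Fatou's lemma on the right-hand side, noting that the positive part of $\zeta_{\phi_R}$ is uniformly controlled by $\beta_{-2\alpha}(x+y)^{-2\alpha}$ while the negative part is dominated by $\phi_R(x)+\phi_R(y) \le x^{-2\alpha}+y^{-2\alpha}$.
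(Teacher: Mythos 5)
Your argument is essentially the same as the paper's: split the integral in \eqref{p9} into the regions $(0,1)^2$, $(0,1)\times(1,\infty)$, $(1,\infty)^2$, use \eqref{p7b} to make the contribution from $(0,1)^2$ non-positive, control the other regions via \eqref{p1} and \eqref{p21}, apply Gronwall, and then deduce a uniform bound on $M_0(f_n)$ to preclude finite-time blow-up via \eqref{p200}. The only cosmetic difference is that you route the sign analysis on $(0,1)^2$ through $(xy)^{-\alpha}$ via AM--GM, whereas the paper works with $(x+y)^{-2\alpha}$ throughout via convexity of $V_{-2\alpha}$; both yield exactly the same threshold on $E$. Your crude bounds $M_{-\alpha}\le M_{-2\alpha}+\varrho$ and $M_0\le M_{-\alpha}+\varrho$ are a harmless variant of the paper's H\"older interpolations.

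One caveat concerns the technical point you flag about $x^{-2\alpha}\notin L^\infty(0,\infty)$. The paper indeed glosses over this, and your instinct to approximate is right, but the truncation $\phi_R(x)=x^{-2\alpha}\mathbf{1}_{(1/R,\infty)}(x)$ does not deliver ``the analogous differential inequality'' with a constant uniform in $R$: the sign $\zeta_{\phi_R}\le 0$ on $(0,1)^2$ breaks down. If $x<1/R\le y<1$, then $\phi_R(x)=0$ while $\phi_R(x+y)=(x+y)^{-2\alpha}$ and $\int_0^{x+y}\phi_R(z)b\,dz\le\beta_{-2\alpha}(x+y)^{-2\alpha}$, so
\[
\zeta_{\phi_R}(x,y)\le (1-E(x,y))(\beta_{-2\alpha}-1)\,y^{-2\alpha}\,,
\]
which is non-negative. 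Paired with $K\le k_1(xy)^{-\alpha}$, the resulting contribution to \eqref{p9} involves $\int_{1/R}^{1} y^{-3\alpha}f_n(t,y)\,dy$, and $M_{-3\alpha}$ is neither assumed nor obtainable by interpolation between $M_{-2\alpha}$ and $M_1$; for $\alpha\ge1/3$ it may even diverge. Nor can ``Fatou on the right-hand side'' rescue the limit $R\to\infty$: one would need an integrable majorant for the positive part of $\zeta_{\phi_R}K_nf_nf_n$, and the candidate $\beta_{-2\alpha}(x+y)^{-2\alpha}K_nf_nf_n$ is integrable on $(0,1)^2$ only if $M_{-2\alpha}(f_n(t))<\infty$ is already known, which is circular. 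The clean way to justify the computation is to note that $K_n$ is bounded with compact support and $b$ satisfies \eqref{p6a}, so the truncated operators $\mathcal{B}_{c,n}$, $\mathcal{B}_{b,n}$, $\mathcal{D}_n$ map $L^1((0,n),(1+x^{-2\alpha})dx)$ to itself in a locally Lipschitz way; since $f_n^{in}\in X_{-2\alpha}$, the solution of \eqref{cecbt} evolves in this weighted space and $t\mapsto M_{-2\alpha}(f_n(t))$ is $C^1$ directly. With this minor repair the rest of your proof goes through.
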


\begin{proof}
Owing to the convexity of $x\mapsto x^{-2\alpha}$ and \eqref{p6},
\begin{align*}
\zeta_{-2\alpha}(x,y) & = E(x,y) (x+y)^{-2\alpha} + (1-E(x,y)) \int_0^{x+y} z^{-2\alpha} b(z,x,y)\ dz - 2 \left( \frac{x^{-2\alpha} + y^{-2\alpha}}{2} \right) \\
& \le \left[ E(x,y) + \beta_{-2\alpha} (1-E(x,y)) \right] (x+y)^{-2\alpha} - 2 \left( \frac{x+y}{2} \right)^{-2\alpha} \\
& = \left[ \beta_{-2\alpha} - (\beta_{-2\alpha}-1) E(x,y) - 2^{1+2\alpha} \right) (x+y)^{-2\alpha}
\end{align*}
for $(x,y)\in (0,\infty)^2$. Thanks to \eqref{p7}, we further obtain
\begin{equation*}
\zeta_{-2\alpha}(x,y)\le 0\,, \quad (x,y)\in (0,1)^2\,, \qquad \zeta_{-2\alpha}(x,y)\le \beta_{-2\alpha} (x+y)^{-2\alpha}\,, \quad (x,y)\in (0,\infty)^2\,,
\end{equation*}
and we infer from \eqref{p1}, \eqref{p9}, and the symmetry of $K_n$ and $\zeta_{-2\alpha}$ that, for $t\in (0,T_n)$, 
\begin{align*}
\frac{d}{dt} M_{-2\alpha}(f_n(t)) & = \frac{1}{2} \int_0^1 \int_0^1 \zeta_{-2\alpha}(x,y) K_n(x,y) f_n(t,x) f_n(t,y)\ dydx \\
& \qquad + \int_0^1 \int_1^\infty \zeta_{-2\alpha}(x,y) K_n(x,y) f_n(t,x) f_n(t,y)\ dydx \\
& \qquad +  \frac{1}{2} \int_1^\infty \int_1^\infty \zeta_{-2\alpha}(x,y) K_n(x,y) f_n(t,x) f_n(t,y)\ dydx \\
& \le k_1 \beta_{-2\alpha} \int_0^1 \int_1^\infty (x+y)^{-2\alpha} x^{-\alpha} y f_n(t,x) f_n(,y)\ dydx \\
& \qquad + \frac{k_1 \beta_{-2\alpha}}{2} \int_1^\infty \int_1^\infty (x+y)^{-2\alpha}x y f_n(t,x) f_n(,y)\ dydx \,.
\end{align*}
We now observe that, since $-2\alpha < -\alpha<0$,
\begin{align*}
(x+y)^{-2\alpha} x^{-\alpha} y & \le (x+y)^{-\alpha} x^{-\alpha} y \le x^{-2\alpha} y\,, \qquad (x,y)\in (0,1)\times (1,\infty)\,, \\
(x+y)^{-2\alpha} x y & \le xy\,, \qquad (x,y)\in (1,\infty)^2\,,
\end{align*}
so that, using also \eqref{p21},
\begin{equation*}
\frac{d}{dt} M_{-2\alpha}(f_n(t)) \le k_1 \beta_{-2\alpha} \varrho \left( \varrho +  M_{-2\alpha}(f_n(t)) \right)\,.
\end{equation*}
Hence, after integration with respect to time,  
\begin{align}
M_{-2\alpha}(f_n(t)) & \le M_{-2\alpha}(f_n^{in}) e^{k_1 \beta_{-2\alpha} \varrho t} + \varrho \left( e^{k_1 \beta_{-2\alpha}\varrho t} - 1 \right) \nonumber \\
& \le \left( M_{-2\alpha}(f^{in}) + \varrho \right) e^{k_1 \beta_{-2\alpha}\varrho t} \,, \qquad t\in [0,T_n)\,. \label{p26}
\end{align}

Now, for $t\in [0,T_n)$, it follows from H\"older's inequality and \eqref{p21} that 
\begin{equation*}
M_0(f_n(t)) \le M_{-2\alpha}(f_n(t))^{1/(1+2\alpha)} M_1(f_n(t))^{2\alpha/(1+2\alpha)} \le \varrho^{2\alpha/(1+2\alpha)} M_{-2\alpha}(f_n(t))^{1/(1+2\alpha)}\,,
\end{equation*}
and a first consequence of \eqref{p26} is that $M_0(f_n)$ cannot blow up in finite time, which excludes the occurrence of \eqref{p200} and thus implies that $T_n=\infty$. Next, the estimate \eqref{p26} readily gives \eqref{p25a}, while we deduce from H\"older's inequality, \eqref{p21}, and \eqref{p26} that, for $t\in (0,\infty)$,
\begin{align*}
M_{-\alpha}(f_n(t)) & \le M_{-2\alpha}(f_n(t))^{(1+\alpha)/(1+2\alpha)} M_1(f_n(t))^{\alpha/(1+2\alpha)} \\
& \le \varrho^{\alpha/(1+2\alpha)} \left( M_{-2\alpha}(f^{in}) + \varrho \right)^{(1+\alpha)/(1+2\alpha)} e^{k_1 (1+\alpha)\beta_{-2\alpha}\varrho t/(1+2\alpha)}\,,
\end{align*} 
and thereby complete the proof.
\end{proof}

We now turn to an estimate for superlinear moments when the growth of $K$ is at most sublinear.

\begin{lemma}\label{lemb4}
\refstepcounter{NCEx}\label{cst2} Assume further that $K$ satisfies \eqref{p2} and there is a convex function $\varphi\in C^2([0,\infty))$ such that
\begin{equation}
M_\varphi(f^{in}) = \int_0^\infty \varphi(x) f^{in}(x)\ dx < \infty \,, \label{pj3}
\end{equation}
and $\varphi$ has the following properties: $\varphi(0)=\varphi'(0)=0$, $\varphi'$ is a concave function which is positive in $(0,\infty)$. Then, for all $T>0$, there is $C_{\ref{cst2}}(T,\varphi)>0$ such that 
\begin{equation}
M_{\varphi}(f_n(t)) \le C_{\ref{cst2}}(T,\varphi)\,, \qquad t\in [0,T]\,, \ n\ge 1\,. \label{p27}
\end{equation}
\end{lemma}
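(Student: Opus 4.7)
The plan is to test the differentiated equation \eqref{p9} with $\phi=\varphi$ and close the resulting scalar differential inequality for $M_\varphi(f_n(t))$ by Gronwall's lemma. Everything reduces to establishing pointwise bounds on $\zeta_\varphi(x,y)$ that are compatible with the regional estimates \eqref{p1} and \eqref{p2} on $K_n$; once integrated against $f_n\otimes f_n$ and combined with \eqref{p21} and Lemma~\ref{lemb3}, the resulting right-hand side will be affine in $M_\varphi(f_n(t))$ with coefficients depending only on $T$, $\varphi$, $k_1$, $k_2$, and $\varrho$.

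The pointwise analysis of $\zeta_\varphi$ proceeds in three stages. First, the convexity of $\varphi$ together with $\varphi(0)=0$ forces $z\mapsto\varphi(z)/z$ to be non-decreasing, so $\varphi(z)\le z\varphi(x+y)/(x+y)$ on $(0,x+y)$ and \eqref{p40b} gives $\int_0^{x+y}\varphi(z)b(z,x,y)\,dz\le\varphi(x+y)$; hence $\zeta_\varphi(x,y)\le\varphi(x+y)-\varphi(x)-\varphi(y)$. Second, the concavity of $\varphi'$ with $\varphi'(0)=0$ makes $\varphi'$ subadditive, so integrating $\varphi'(x+s)\le\varphi'(x)+\varphi'(s)$ on $(0,y)$ yields $\varphi(x+y)-\varphi(x)-\varphi(y)\le y\varphi'(x)$, and by symmetry $\le x\varphi'(y)$. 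Third, the same concavity at $0$ makes $\varphi'(s)/s$ non-increasing, so $x\varphi'(y)\le y\varphi'(x)$ whenever $x\le y$, and therefore
\begin{equation*}
0\le\zeta_\varphi(x,y)\le\min\{x,y\}\,\varphi'\bigl(\max\{x,y\}\bigr)\,, \qquad (x,y)\in(0,\infty)^2\,.
\end{equation*}

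The crucial auxiliary inequality, which I expect to be the main obstacle, is
\begin{equation*}
t\,\varphi'(t)\le 4\varphi(t)\,, \qquad t\ge 0\,.
\end{equation*}
To prove it, the concavity of $\varphi'$ with $\varphi'(0)=0$ gives $\varphi'(t/2)\ge\varphi'(t)/2$, and the monotonicity of $\varphi'$ (from convexity of $\varphi$) then yields $\varphi(t)\ge\int_{t/2}^t\varphi'(s)\,ds\ge(t/2)\varphi'(t/2)\ge t\varphi'(t)/4$. This bound is the only mechanism preventing the region-$C$ estimate below (forced by the at most linear growth \eqref{p2} of $K$) from requiring a second moment of $f_n$ that is not a priori available: it converts the factor $\max\{x,y\}\,\varphi'(\max\{x,y\})$ into $4\varphi(\max\{x,y\})\le 4(\varphi(x)+\varphi(y))$, keeping the right-hand side controlled by $M_\varphi$ alone.

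The remaining step is to split $(0,\infty)^2=A\cup B\cup C$ with $A=(0,1)^2$, $C=(1,\infty)^2$, and $B=\{(x,y):\min\{x,y\}<1<\max\{x,y\}\}$, and to estimate each piece. On $A$, the bound $K_n\le k_1(xy)^{-\alpha}$ together with $\varphi'(\max\{x,y\})\le\varphi'(1)$ and $\min\{x,y\}\le 1$ yields an integrand controlled by $k_1\varphi'(1)(xy)^{-\alpha}f_n(x)f_n(y)$, hence a contribution bounded by $k_1\varphi'(1)M_{-\alpha}(f_n)^2$, which is bounded by Lemma~\ref{lemb3}. On $B$, \eqref{p1} gives $K_n\le k_1\min\{x,y\}^{-\alpha}\max\{x,y\}$; combined with the auxiliary inequality and $\min\{x,y\}^{1-\alpha}\le 1$, the integrand is at most $4k_1\varphi(\max\{x,y\})f_n(x)f_n(y)$, producing a contribution controlled by $k_1 M_{-\alpha}(f_n) M_\varphi(f_n)$. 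On $C$, \eqref{p2} yields $K_n\le 2k_2\max\{x,y\}$; the auxiliary inequality together with the elementary estimate $\min\{x,y\}(\varphi(x)+\varphi(y))\le y\varphi(x)+x\varphi(y)$ bounds the integrand by $8k_2(y\varphi(x)+x\varphi(y))f_n(x)f_n(y)$, hence a contribution controlled by $k_2\varrho M_\varphi(f_n)$ via \eqref{p21}. Summing the three contributions gives
\begin{equation*}
\frac{d}{dt}M_\varphi(f_n(t))\le C_1(T,\varphi)+C_2(T,\varphi)\,M_\varphi(f_n(t))\,, \qquad t\in(0,T)\,,
\end{equation*}
and Gronwall's lemma, combined with the hypothesis $M_\varphi(f^{in})<\infty$ from \eqref{pj3}, yields the claimed bound \eqref{p27}.
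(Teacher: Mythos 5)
Your proof is correct and follows the same general plan as the paper's: differentiate $M_\varphi(f_n)$ via \eqref{p9}, show $\int_0^{x+y}\varphi(z)b(z,x,y)\,dz\le\varphi(x+y)$ so that $\zeta_\varphi\le\varphi(x+y)-\varphi(x)-\varphi(y)$, bound this quantity regionally against \eqref{p1} and \eqref{p2}, and close with Gronwall. The differences lie in the technical inequalities, and yours are more self-contained. For the $b$-integral bound, the paper applies Jensen's inequality to the probability measure $zb(z,x,y)/(x+y)\,dz$ and invokes the concavity of $\varphi_1(x)=\varphi(x)/x$ from \cite[Prop.~7.1.9(c)]{BLL19}; you only need the elementary monotonicity of $\varphi(z)/z$ and compute directly, which is simpler. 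For the intermediate bound, the paper uses three separate inequalities \eqref{p29}, \eqref{p210}, \eqref{p211} in the three regions, two of them cited from \cite[Prop.~7.1.9(a),(e)]{BLL19}; you instead derive a single unified pointwise bound $\zeta_\varphi(x,y)\le\min\{x,y\}\,\varphi'(\max\{x,y\})$ from the subadditivity of $\varphi'$ (concavity with $\varphi'(0)=0$) and the monotonicity of $s\mapsto\varphi'(s)/s$, together with a self-contained proof of $t\varphi'(t)\le 4\varphi(t)$ (the paper uses the sharper constant $2$, again cited; $4$ works just as well here). You also use $M_{-\alpha}(f_n)$ where the paper uses a H\"older interpolation to $M_{1-\alpha}(f_n)$; both are controlled by Lemma~\ref{lemb3}, so either works. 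The only blemish is the asserted lower bound $\zeta_\varphi\ge 0$ in your displayed inequality: it fails in general (take $\varphi=V_2$, $b$ as in \eqref{p300}, $E\equiv 0$, and $x\ll y$), but you never use it, so the argument is unaffected — simply delete the ``$0\le$''.
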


\begin{proof}
We set $\varphi_1(x) := \varphi(x)/x$ for $x\in (0,\infty)$, $\varphi_1(0)=0$ and first recall that the properties of $\varphi$ entails that $\varphi_1$ is concave and non-decreasing \cite[Proposition~7.1.9~(c)]{BLL19}. It then follows from Jensen's inequality and \eqref{p40} that, for $(x,y)\in (0,\infty)^2$,
\begin{align*}
\int_0^{x+y} \varphi(z) b(z,x,y)\ dz & = (x+y) \int_0^{x+y} \varphi_1(z) \frac{z b(z,x,y)}{x+y}\ dz \\
& \le (x+y) \varphi_1 \left( \int_0^{x+y} z \frac{z b(z,x,y)}{x+y}\ dz \right) \\
& \le (x+y) \varphi_1 \left( \int_0^{x+y} z b(z,x,y)\ dz \right) \\
& = (x+y) \varphi_1(x+y) = \varphi(x+y)\,.
\end{align*}
Consequently, since $E\le 1$ by \eqref{p7a},
\begin{align}
\zeta_\varphi(x,y) & \le E(x,y) \varphi(x+y) + (1-E(x,y)) \varphi(x+y) -\varphi(x) - \varphi(y) \nonumber \\
& = \varphi(x+y) -\varphi(x) - \varphi(y)\,. \label{p28}
\end{align}
Owing to the concavity of $\varphi'$, 
\begin{equation*}
\varphi''(z)\le \varphi''(0) \;\;\text{ and }\;\; \varphi'(2z) \le 2 \varphi'(z)\,, \qquad z\in (0,\infty)\,,
\end{equation*} 
so that, for $(x,y)\in (0,\infty)^2$,
\begin{equation}
\varphi(x+y) -\varphi(x) - \varphi(y) = \int_0^x \int_0^y \varphi''(x_*+y_*)\ dy_*dx_* \le \varphi''(0) x y \label{p29}
\end{equation}
and,  for $0<x\le y$,
\begin{align}
\varphi(x+y) -\varphi(x) - \varphi(y) & \le \int_{y}^{x+y} \varphi'(z)\ dz \le x \varphi'(x+y) \nonumber \\
& \le x \varphi'(2y) \le 2 x \varphi'(y) \le 4 x \frac{\varphi(y)}{y}\,, \label{p210}
\end{align}
the last inequality being a consequence of the property $y\varphi'(y) \le 2 \varphi(y)$, see \cite[Proposition~7.1.9~(a)]{BLL19}. Also, by \cite[Proposition~7.1.9~(e)]{BLL19}
\begin{equation}
(x+y) \left[ \varphi(x+y) -\varphi(x) - \varphi(y) \right] \le 2 \left[ x \varphi(y) + y \varphi(x) \right]\,, \qquad (x,y)\in (0,\infty)^2\,. \label{p211}
\end{equation}
We then infer from \eqref{p1}, \eqref{p2}, \eqref{p9}, \eqref{p28}, \eqref{p29}, \eqref{p210}, and \eqref{p211} that, for $t>0$, 
\begin{align*}
\frac{d}{dt} M_\varphi(f_n(t)) & \le \frac{1}{2} \int_0^\infty \int_0^\infty \left[ \varphi(x+y)-\varphi(x) -\varphi(y) \right] K_n(x,y) f_n(t,x) f_n(t,y)\ dydx \\
& \le \frac{\varphi''(0)}{2} \int_0^1 \int_0^1 xy K_n(x,y) f_n(t,x) f_n(t,y)\ dydx \\
& \qquad + 4 \int_0^1 \int_1^\infty x \frac{\varphi(y)}{y} K_n(x,y) f_n(t,x) f_n(t,y)\ dydx \\
& \qquad + \int_1^\infty \int_1^\infty \frac{x\varphi(y)+y\varphi(x)}{x+y} K_n(x,y) f_n(t,x) f_n(t,y)\ dydx \\
& \le \frac{k_1 \varphi''(0)}{2} M_{1-\alpha}(f_n(t))^2 + 4k_1 M_{1-\alpha}(f_n(t)) M_\varphi(f_n(t)) \\
& \qquad + 2k_2 M_1(f_n(t)) M_\varphi(f_n(t))\,.
\end{align*}
We next use \eqref{p21} to obtain
\begin{equation}
\frac{d}{dt} M_\varphi(f_n(t)) \le k_1 \varphi''(0) M_{1-\alpha}(f_n(t))^2 + 2 \left( 2k_1 M_{1-\alpha}(f_n(t)) + k_2 \varrho \right) M_\varphi(f_n(t)) \,. \label{p212}
\end{equation}

Now, let $T>0$ and $t\in (0,T)$. By H\"older's inequality, \eqref{p21}, and \eqref{p25a},
\begin{equation*}
M_{1-\alpha}(f_n(t)) \le M_1(f_n(t))^{(1+\alpha)/(1+2\alpha)} M_{-2\alpha}(f_n(t))^{\alpha/(1+2\alpha)} \le \varrho^{(1+\alpha)/(1+2\alpha)} C_{\ref{cst1}}(T)^{\alpha/(1+2\alpha)}\,,
\end{equation*}
and we deduce from \eqref{p212} that
\begin{equation*}
\frac{d}{dt} M_\varphi(f_n(t)) \le C(T) \left( 1 + M_\varphi(f_n(t)) \right)\,, \qquad t\in (0,T)\,.
\end{equation*}
Integrating the above differential inequality and using \eqref{p20b} and \eqref{pj3} give \eqref{p27}.
\end{proof}

\begin{corollary}\label{corb4.5}
\refstepcounter{NCEx}\label{cst5} Assume further that $K$ satisfies \eqref{p2} and that $f^{in}\in X_2$. Then , for all $T>0$, there is $C_{\ref{cst5}}(T)>0$ such that 
\begin{equation*}
M_2(f_n(t)) \le C_{\ref{cst5}}(T)\,, \qquad t\in [0,T]\,, \ n\ge 1\,. 
\end{equation*}
\end{corollary}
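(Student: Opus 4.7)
The plan is to obtain the corollary as a direct consequence of Lemma~\ref{lemb4} by choosing $\varphi(x) = x^2$, $x \ge 0$.

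First I would verify that $\varphi(x) := x^2$ satisfies all the structural requirements imposed on the test function in Lemma~\ref{lemb4}. Indeed, $\varphi \in C^\infty([0,\infty))$ is convex, with $\varphi(0)=0$, $\varphi'(x) = 2x$ (so $\varphi'(0)=0$ and $\varphi'>0$ on $(0,\infty)$), and $\varphi''\equiv 2$ so that $\varphi'$ is affine, hence in particular concave. Thus $\varphi = V_2$ fits exactly the class of weights for which Lemma~\ref{lemb4} applies.

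Next, since $f^{in} \in X_2$ by hypothesis, we have $M_\varphi(f^{in}) = M_2(f^{in}) < \infty$, so the integrability assumption \eqref{pj3} is fulfilled. Observing that $K$ satisfies \eqref{p2} by assumption, all the hypotheses of Lemma~\ref{lemb4} are in force. Applying the lemma with this choice of $\varphi$ yields a constant $C_{\ref{cst2}}(T,\varphi)>0$ such that
\begin{equation*}
M_2(f_n(t)) = M_\varphi(f_n(t)) \le C_{\ref{cst2}}(T,\varphi)\,, \qquad t\in [0,T]\,, \ n\ge 1\,,
\end{equation*}
and it suffices to set $C_{\ref{cst5}}(T) := C_{\ref{cst2}}(T,V_2)$ to conclude. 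There is no real obstacle here, as the corollary is essentially just the specialization of Lemma~\ref{lemb4} to the canonical quadratic weight; the substantive work (controlling the quadratic form $\varphi(x+y)-\varphi(x)-\varphi(y)$ by $xy$ on $(0,1)^2$, by $x\varphi(y)/y$ on $(0,1)\times(1,\infty)$, and by $(x\varphi(y)+y\varphi(x))/(x+y)$ on $(1,\infty)^2$, and then closing the Gr\"onwall loop via the sublinear moment bound $M_{1-\alpha}(f_n(t))$ from Lemma~\ref{lemb3}) has already been carried out in the proof of Lemma~\ref{lemb4}.
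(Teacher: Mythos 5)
Your proof is correct and follows exactly the same route as the paper: verify that $\varphi = V_2$ satisfies the structural hypotheses of Lemma~\ref{lemb4} (convexity, $\varphi(0)=\varphi'(0)=0$, concave positive derivative) and that $f^{in}\in X_2$ gives \eqref{pj3}, then apply the lemma. Nothing to add.
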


\begin{proof}
Since the function $V_2: x\mapsto x^2$ is convex on $[0,\infty)$ with concave derivative and satisfies $V_2(0)=V_2'(0) =0$, Corollary~\ref{corb4.5} is a straightforward consequence of Lemma~\ref{lemb4} with $\varphi=V_2$.
\end{proof}

The next step is the uniform integrability in $X_{-\alpha}$. 

\begin{lemma}\label{lemb5}
For any $T>0$, there is a sequentially weakly compact subset $\mathcal{K}(T)$ of $X_{-\alpha}$ such that
\begin{equation*}
f_n(t) \in \mathcal{K}(T)\,, \qquad t\in [0,T]\,, \ n\ge 1\,.
\end{equation*}
\end{lemma}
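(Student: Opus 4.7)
The plan is to apply the Dunford-Pettis theorem to obtain weak sequential compactness in $X_{-\alpha}$, which requires verifying boundedness, tightness, and uniform integrability of $\{f_n(t)\}_{n\ge 1,\ t\in[0,T]}$ with respect to the measure $x^{-\alpha}\,dx$. Boundedness of $\|f_n(t)\|_{X_{-\alpha}}=M_{-\alpha}(f_n(t))$ on $[0,T]$ is already supplied by \eqref{p25b} in Lemma~\ref{lemb3}, so the remaining work is tightness and uniform integrability, both uniformly in $n$ and $t\in[0,T]$.

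For tightness I would handle the two escape regions separately. At infinity, since $x^{-\alpha}\le x R^{-1-\alpha}$ whenever $x\ge R\ge 1$, the bound $M_1(f_n(t))\le\varrho$ from \eqref{p21} yields
\begin{equation*}
\int_R^\infty x^{-\alpha} f_n(t,x)\,dx \le \varrho\, R^{-1-\alpha}\,,
\end{equation*}
which tends to $0$ as $R\to\infty$. Near the origin, the factorisation $x^{-\alpha}=x^{\alpha}\cdot x^{-2\alpha}$ combined with \eqref{p25a} gives
\begin{equation*}
\int_0^r x^{-\alpha} f_n(t,x)\,dx \le r^{\alpha}\,C_{\ref{cst1}}(T)\,,
\end{equation*}
which tends to $0$ as $r\to 0$.

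For uniform integrability I would invoke the de la Vall\'ee Poussin theorem applied to $f^{in}\in X_{-\alpha}$ to produce a convex, non-decreasing function $\Phi\in C^1([0,\infty))$ with $\Phi(0)=\Phi'(0)=0$, $\Phi'$ concave, $\Phi(r)/r\to\infty$ as $r\to\infty$, satisfying a $\Delta_2$-type estimate $r\Phi'(r)\le c_\Phi\Phi(r)$, and such that $\int_0^\infty x^{-\alpha}\Phi(f^{in}(x))\,dx<\infty$. The goal is then to bound $G_n(t):=\int_0^\infty x^{-\alpha}\Phi(f_n(t,x))\,dx$ uniformly on $[0,T]$. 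Differentiating and substituting the equation for $f_n$, the death term gives a non-positive contribution which may even be used, via $\Phi(r)\le r\Phi'(r)$, as a negative absorption of the form $-\int x^{-\alpha}\Phi(f_n(x))\int K_n(x,y)f_n(y)\,dy\,dx$. The coagulation and breakage gain terms are estimated through the Young-type inequality $\Phi'(a)b\le(c_\Phi-1)\Phi(a)+\Phi(b)$ (a consequence of convexity combined with the $\Delta_2$ condition), then reduced by Fubini and controlled using \eqref{p1}, \eqref{p5}, and the moment estimates from Lemma~\ref{lemb3}. This should yield a differential inequality $G_n'(t)\le C(T)(1+G_n(t))$, and a Gronwall argument then provides the desired uniform bound on $G_n$, hence the uniform integrability of $\{f_n(t)\}$ in $X_{-\alpha}$.

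The main obstacle is closing the differential inequality for $G_n$ in the presence of the singularity of $K$ at the origin. The coagulation gain produces factors such as $(x-y)^{-\alpha}y^{-\alpha}$ which must be redistributed by Fubini so that the outer weight $x^{-\alpha}$ and the $\Delta_2$ splitting of $\Phi'(f_n(x))f_n(x-y)$ conspire to reproduce either $G_n$ itself or quantities controlled by the already-established moment bounds \eqref{p21}, \eqref{p25a}, \eqref{p25b}. Similarly, the breakage gain has to exploit the uniform integrability assumption \eqref{p5} through the exponent $\theta$ and modulus $\omega$ in order to absorb the singular weight. Once these combinatorial estimates are carried out, combining the resulting uniform integrability with the tightness above, the Dunford-Pettis criterion delivers the sequentially weakly compact set $\mathcal{K}(T)\subset X_{-\alpha}$ we seek.
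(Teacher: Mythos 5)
Your outline correctly reduces the lemma to verifying, uniformly in $n$ and $t\in[0,T]$, boundedness, tightness at infinity, and uniform integrability in $X_{-\alpha}$; the boundedness and tightness-at-infinity pieces are fine and coincide with what the paper uses at the end of its proof. The gap is in uniform integrability. First, your near-origin estimate $\int_0^r x^{-\alpha}f_n(t,x)\,dx\le r^\alpha C_{\ref{cst1}}(T)$ only handles sets of the form $(0,r)$, not arbitrary measurable $A$ with $|A|$ small, so it is not a substitute for uniform integrability. Second, the de la Vall\'ee Poussin route you propose for $G_n(t)=\int_0^\infty x^{-\alpha}\Phi(f_n(t,x))\,dx$ does not close for the singular kernels \eqref{p1}. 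After transporting the outer weight onto $x+y$ by Fubini, the coagulation gain contributes
\begin{equation*}
\frac{1}{2}\int_0^\infty\int_0^\infty (x+y)^{-\alpha}\,\Phi'\bigl(f_n(t,x+y)\bigr)E(x,y)K_n(x,y)f_n(t,x)f_n(t,y)\,dy\,dx\,,
\end{equation*}
and the convexity/$\Delta_2$ splitting $a\Phi'(b)\lesssim\Phi(a)+\Phi(b)$ produces, on the region $(0,1)^2$ where $K_n(x,y)\le k_1(xy)^{-\alpha}$ and $(x+y)^{-\alpha}\le x^{-\alpha}$, a term bounded by $k_1 M_{-\alpha}(f_n(t))\int_0^1 x^{-2\alpha}\Phi(f_n(t,x))\,dx$. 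The singular weight has been promoted from $x^{-\alpha}$ to $x^{-2\alpha}$, and this quantity is not controlled by $G_n(t)$; repeating with the weight $x^{-2\alpha}$ promotes it to $x^{-3\alpha}$, so the differential inequality cascades and never closes. This is precisely why the Stewart-type convex-moment argument, which works for locally bounded kernels, is not used here.

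The paper circumvents this by propagating the modulus of uniform integrability itself: it sets $\mathcal{E}_{n,R}(t,\delta)=\sup\{\int_A x^{-\alpha}f_n(t,x)\,dx:A\subset(0,R),\ |A|\le\delta\}$, tests \eqref{p9} against $\phi_A(x)=x^{-\alpha}\mathbf{1}_A(x)$, and bounds $\zeta_{\phi_A}$ using \eqref{p5a} and \eqref{p7a}. The decisive device, which your scheme has no analogue of, is that after Fubini the indicator $\mathbf{1}_A(x+y)$ becomes $\mathbf{1}_{-y+A}(x)$, and the translated set $-y+A$ still lies in $(0,R)$ and still has measure $\le\delta$; hence the factor $x^{-\alpha}\mathbf{1}_{-y+A}(x)f_n(t,x)$ is again dominated by $\mathcal{E}_{n,R}(t,\delta)$, while the leftover singularity $(x+y)^{-\alpha}y^{-\alpha}\le y^{-2\alpha}$ is absorbed by $M_{-2\alpha}(f_n(t))$ from Lemma~\ref{lemb3}. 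Gronwall then gives $\mathcal{E}_{n,R}(t,\delta)\le C(T)[\mathcal{E}^{in}(\delta)+\omega(\delta)]$, which vanishes as $\delta\to 0$ by \eqref{p5b} and \eqref{p214}, and the passage from $(0,R)$ to $(0,\infty)$ is exactly your tightness bound at infinity. In short, the translation invariance $|-y+A|=|A|$ of Lebesgue measure, not a de la Vall\'ee Poussin function, is what makes the singular kernel tractable.
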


\begin{proof}
We argue as in \cite{Stew89}, the singularity of $K$ for small sizes being handled as in \cite{BaGib, CCGW15, CCWa15}. Let $T>0$, $R>1$, and $\delta\in (0,1)$, and define
\begin{equation*}
\mathcal{E}_{n,R}(t,\delta) := \sup\left\{ \int_A x^{-\alpha} f_n(t,x)\ dx\ :\ A\subset (0,R)\,, \ |A|\le \delta \right\}
\end{equation*} 
for $t\in [0,T]$. We also set 
\begin{equation*}
\mathcal{E}^{in}(\delta) := \sup\left\{ \int_A x^{-\alpha} f^{in}(x)\ dx\ :\ A\subset (0,\infty)\,, \ |A|\le \delta \right\}\,,
\end{equation*} 
and note that
\begin{equation}
\mathcal{E}_{n,R}(0,\delta) \le \mathcal{E}^{in}(\delta)\,, \label{p213} 
\end{equation}
while the integrability properties of $f^{in}$ ensure that
\begin{equation}
\lim_{\delta\to 0} \mathcal{E}^{in}(\delta) = 0 \,. \label{p214}
\end{equation}

Consider now a measurable subset $A\subset (0,R)$ with finite measure $|A|\le \delta$ and set $\phi_A(x) := x^{-\alpha} \mathbf{1}_A(x)$ for $x\in (0,\infty)$. By \eqref{p5a} and \eqref{p7a}, 
\begin{align*}
\zeta_{\phi_A}(x,y) & \le E(x,y) (x+y)^{-\alpha} \mathbf{1}_{A}(x+y) + (1-E(x,y)) \omega(|A|) (x+y)^{-\alpha} \left( x^{-\theta} + y^{-\theta} \right) \\
& \le (x+y)^{-\alpha} \mathbf{1}_{A}(x+y) + \omega(\delta) (x+y)^{-\alpha} \left( x^{-\theta} + y^{-\theta} \right)
\end{align*}
for $(x,y)\in (0,\infty)^2$, and it follows from \eqref{p9} that, for $t\in (0,T)$,
\begin{equation}
\begin{split}
& \frac{d}{dt} \int_0^\infty \phi_A(x) f_n(t,x))\ dx \\
& \qquad \le \frac{1}{2} \int_0^\infty \int_0^\infty (x+y)^{-\alpha} \mathbf{1}_A(x+y) K_n(x,y) f_n(t,x) f_n(t,y)\ dydx \\
& \qquad\qquad + \frac{\omega(\delta)}{2} \int_0^\infty \int_0^\infty (x+y)^{-\alpha} \left( x^{-\theta} + y^{-\theta} \right) K_n(x,y) f_n(t,x) f_n(t,y)\ dydx\,.
\end{split} \label{p215}
\end{equation}

On the one hand, since $A\subset (0,R)$, we infer from \eqref{p1} that
\begin{align*}
I_n(t) & := \frac{1}{2} \int_0^\infty \int_0^\infty (x+y)^{-\alpha} \mathbf{1}_A(x+y) K_n(x,y) f_n(t,x) f_n(t,y)\ dydx \\
& = \frac{1}{2} \int_0^R \int_0^R (x+y)^{-\alpha} \mathbf{1}_A(x+y) K_n(x,y) f_n(t,x) f_n(t,y)\ dydx \\
& \le \frac{k_1}{2} \int_0^1 \int_0^1 (x+y)^{-\alpha} (xy)^{-\alpha} \mathbf{1}_A(x+y) f_n(t,x) f_n(t,y)\ dydx \\
& \qquad + k_1 \int_0^1 \int_1^R (x+y)^{-\alpha} x^{-\alpha} y \mathbf{1}_A(x+y) f_n(t,x) f_n(t,y)\ dydx \\
& \qquad + \frac{k_1}{2} \int_1^R \int_1^R (x+y)^{-\alpha} xy \mathbf{1}_A(x+y) f_n(t,x) f_n(t,y)\ dydx \\
& \le \frac{k_1}{2} \int_0^1 y^{-2\alpha} f_n(t,y) \int_0^1 x^{-\alpha} \mathbf{1}_{-y+A}(x) f_n(t,x)\ dx dy \\
& \qquad + k_1 \int_1^R y f_n(t,y) \int_0^1 x^{-\alpha} \mathbf{1}_{-y+A}(x) f_n(t,x)\ dx dy \\
& \qquad + \frac{k_1 R}{2} \int_1^R y f_n(t,y) \int_1^R x^{-\alpha} \mathbf{1}_{-y+A}(x) f_n(t,x)\ dx dy\,.
\end{align*}
Since $-y+A \subset (0,R)$ and $|-y+A|=|A|\le \delta$, we infer from \eqref{p21}, \eqref{p25a}, and the above inequality that 
\begin{equation}
I_n(t)  \le \left( \frac{k_1}{2} M_{-2\alpha}(f_n(t)) + k_1 \varrho + \frac{k_1 R \varrho}{2} \right) \mathcal{E}_{n,R}(t,\delta) \le C(T) R \mathcal{E}_{n,R}(t,\delta)\,. \label{p216}
\end{equation}
On the other hand, by \eqref{p1} and \eqref{p6b}, 
\begin{align*}
J_n(t) & := \frac{1}{2} \int_0^\infty \int_0^\infty (x+y)^{-\alpha} \left( x^{-\theta} + y^{-\theta} \right) K_n(x,y) f_n(t,x) f_n(t,y)\ dydx \\
& \le \frac{k_1}{2} \int_0^1 \int_0^1 (x+y)^{-\alpha} \left( x^{-\theta} + y^{-\theta} \right) (xy)^{-\alpha} f_n(t,x) f_n(t,y)\ dydx \\
& \qquad + k_1 \int_0^1 \int_1^\infty (x+y)^{-\alpha} \left( x^{-\theta} + y^{-\theta} \right) x^{-\alpha} y f_n(t,x) f_n(t,y)\ dydx \\
& \qquad + \frac{k_1}{2} \int_1^\infty \int_1^\infty (x+y)^{-\alpha} \left( x^{-\theta} + y^{-\theta} \right) xy f_n(t,x) f_n(t,y)\ dydx \\
& \le k_1 \int_0^1 \int_0^1 x^{-\theta-\alpha} y^{-2\alpha} f_n(t,x) f_n(t,y)\ dydx + 2k_1 \int_0^1  \int_1^\infty x^{-\theta-\alpha} y f_n(t,x) f_n(t,y)\ dydx \\
& \qquad + k_1 \int_1^\infty \int_1^\infty xy f_n(t,x) f_n(t,y)\ dydx \\
& \le k_1 M_{-2\alpha}(f_n(t))^2 + 2k_1 M_{-2\alpha}(f_n(t)) M_1(f_n(t)) + k_1 M_1(f_n(t))^2\,. 
\end{align*}
It then follows from \eqref{p21} and \eqref{p25a} that
\begin{equation}
J_n(t) \le C(T)\,. \label{p217}
\end{equation}

Gathering \eqref{p215}, \eqref{p216}, and \eqref{p217} leads us to
\begin{equation*}
\frac{d}{dt} \int_0^\infty \phi_A(x) f_n(t,x)\ dx \le C(T) \left[ \omega(\delta) + R \mathcal{E}_{n,R}(t,\delta) \right]\,, \qquad t\in (0,T) \,.
\end{equation*}
After integrating with respect to time and taking the supremum over all measurable subsets $A\subset (0,R)$ with finite measure $|A|\le \delta$, we find
\begin{equation*}
\mathcal{E}_{n,R}(t,\delta) \le \mathcal{E}_{n,R}(0,\delta) + C(T) \int_0^t \left[ \omega(\delta) + R \mathcal{E}_{n,R}(s,\delta) \right]\ ds\,, \qquad t\in [0,T]\,.
\end{equation*}
Therefore, by Gronwall's lemma and \eqref{p213}, \refstepcounter{NCEx}\label{cst3}
\begin{equation}
\mathcal{E}_{n,R}(t,\delta) \le \left[ \mathcal{E}_{n,R}(0,\delta) + C(T) \omega(\delta) \right] e^{C(T) R t} \le C_{\ref{cst3}}(T) \left[ \mathcal{E}^{in}(\delta) + \omega(\delta) \right] \,, \qquad t\in [0,T]\,. \label{p218}
\end{equation} 

We next define 
\begin{equation*}
\mathcal{E}(\delta) := \sup\left\{ \int_A x^{-\alpha} f_n(t,x)\ dx\ :\ A\subset (0,\infty)\,, \ |A|\le \delta\,, \ t\in [0,T]\,, \ n\ge 1 \right\}\,.
\end{equation*} 
Let $n\ge 1$, $T>0$, $t\in [0,T]$, and $\delta\in (0,1)$. If $A$ is a measurable subset in $(0,\infty)$ with finite measure $|A|\le\delta$ and $R>1$, then $A\cap (0,R)$ is a measurable subset of $(0,R)$ with finite measure $|A\cap (0,R)|\le |A|\le \delta$ and we infer from \eqref{p21} and \eqref{p218} that
\begin{align*}
\int_A x^{-\alpha} f_n(t,x)\ dx & \le \int_{A\cap (0,R)} x^{-\alpha} f_n(t,x)\ dx + \int_R^\infty x^{-\alpha} f_n(t,x)\ dx \\
& \le \mathcal{E}_{n,R}(t,\delta) + R^{-1-\alpha} \int_R^\infty x f_n(t,x)\ dx \\
& \le C_{\ref{cst3}}(T) \left( \mathcal{E}^{in}(\delta) + \omega(\delta) \right) + \frac{\varrho}{R^{1+\alpha}}\,.
\end{align*}
Since the right-hand side of the above inequality does not depend on $n\ge 1$, $t\in [0,T]$, and $A\subset (0,\infty)$ with finite measure $|A|\le\delta$, we readily conclude that
\begin{equation*}
\mathcal{E}(\delta) \le C_{\ref{cst3}}(T) \left[ \mathcal{E}^{in}(\delta) + \omega(\delta) \right] + \frac{\varrho}{R^{1+\alpha}}\,.
\end{equation*}
Owing to \eqref{p5b} and \eqref{p214}, we may let $\delta\to 0$ in the previous inequality to obtain
\begin{equation*}
\limsup_{\delta\to 0} \mathcal{E}(\delta) \le \frac{\varrho}{R^{1+\alpha}}\,.
\end{equation*}
The above inequality being valid for all $R>1$, we may let $R\to\infty$ and deduce from the non-negativity of $\mathcal{E}(\delta)$ that
\begin{equation*}
\lim_{\delta\to 0} \mathcal{E}(\delta) = 0\,.
\end{equation*}
This property guarantees that the family $\mathcal{F} := \left\{ f_n(t)\ :\ t\in [0,T]\,,\ n\ge 1\right\}$ is uniformly integrable in $X_{-\alpha}$. Since $\mathcal{F}$ is also bounded in $X_1$ by \eqref{p21}, we conclude from these two properties that $\mathcal{F}$ is relatively sequentially weakly compact in $X_{-\alpha}$ according to the Dunford-Pettis theorem.
\end{proof}

The last estimate to be derived is the time equicontinuity of the sequence $(f_n)_{n\ge 1}$.

\begin{lemma}\label{lemb6}
\refstepcounter{NCEx}\label{cst4} For any $T>0$, there is $C_{\ref{cst4}}(T)>0$ such that
\begin{equation*}
\int_0^\infty x^{-\alpha} |f_n(t,x) - f_n(s,x)|\ dx \le C_{\ref{cst4}}(T) |t-s|\,, \qquad (t,s)\in [0,T]^2\,,\ n\ge 1\,.
\end{equation*}
\end{lemma}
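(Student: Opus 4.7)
The plan is to bound $I_n(\tau) := \int_0^\infty x^{-\alpha}|\partial_\tau f_n(\tau,x)|\,dx$ uniformly in $\tau\in[0,T]$ and $n\ge 1$, and then conclude by integrating in time. Since $f_n\in C^1([0,\infty),L^1(0,n))$ by Proposition~\ref{propb1}, the identity $f_n(t,x)-f_n(s,x) = \int_s^t\partial_\tau f_n(\tau,x)\,d\tau$ holds for a.e.\ $x\in(0,\infty)$, so multiplying by $x^{-\alpha}$ and applying Tonelli gives
\begin{equation*}
\int_0^\infty x^{-\alpha}|f_n(t,x)-f_n(s,x)|\,dx \le \int_s^t I_n(\tau)\,d\tau.
\end{equation*}
The non-negativity of $\mathcal{B}_{c,n}f_n$, $\mathcal{B}_{b,n}f_n$, and $\mathcal{D}_n f_n$ further reduces matters to bounding each of their weighted $L^1$-norms individually.

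For $\mathcal{B}_{c,n}$, the substitution $(x,y)\mapsto(u+y,y)$ combined with Fubini, the bound $E\le 1$, and $(u+y)^{-\alpha}\le u^{-\alpha}$ yields
\begin{equation*}
\int_0^\infty x^{-\alpha}\mathcal{B}_{c,n}f_n(\tau,x)\,dx \le \frac{1}{2}J_n(\tau),
\end{equation*}
where
\begin{equation*}
J_n(\tau):=\int_0^\infty\!\int_0^\infty u^{-\alpha}K_n(u,y)f_n(\tau,u)f_n(\tau,y)\,dy\,du.
\end{equation*}
For $\mathcal{B}_{b,n}$, applying Fubini to the inner $x$-integral and invoking \eqref{p6c} produces a weight $\beta_{-2\alpha}(u+z)^{-\alpha}\le\beta_{-2\alpha}u^{-\alpha}$, hence a contribution of at most $\frac{\beta_{-2\alpha}}{2}J_n(\tau)$. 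Finally $\int_0^\infty x^{-\alpha}\mathcal{D}_n f_n\,dx = J_n(\tau)$ by inspection, so $I_n(\tau)\le\frac{3+\beta_{-2\alpha}}{2}J_n(\tau)$.

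It remains to bound $J_n(\tau)$ uniformly on $[0,T]$, which I would do by splitting $(0,\infty)^2$ along the four subdomains of \eqref{p1}. On $(0,1)^2$ the bound $u^{-\alpha}K_n\le k_1 u^{-2\alpha}y^{-\alpha}$ contributes $k_1 M_{-2\alpha}(f_n)M_{-\alpha}(f_n)$; on $(0,1)\times(1,\infty)$, $u^{-\alpha}K_n\le k_1 u^{-2\alpha}y$ contributes $k_1\varrho M_{-2\alpha}(f_n)$; on $(1,\infty)\times(0,1)$, using $u^{1-\alpha}\le u$ for $u\ge 1$, we have $u^{-\alpha}K_n\le k_1 u y^{-\alpha}$, contributing $k_1\varrho M_{-\alpha}(f_n)$; and on $(1,\infty)^2$, $u^{-\alpha}K_n\le k_1 uy$ contributes $k_1\varrho^2$. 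Lemma~\ref{lemb3} together with \eqref{p21} yields a uniform bound $J_n(\tau)\le C(T)$, whence $I_n(\tau)\le C(T)$, and the claim follows with $C_{\ref{cst4}}(T) = C(T)$.

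The argument faces no real obstacle: the only mildly subtle point is to verify that absorbing the $x^{-\alpha}$ weight in the $\mathcal{B}_{b,n}$ term via \eqref{p6c} does not force control of a negative moment of order below $-2\alpha$, which is precisely the moment handled in Lemma~\ref{lemb3}. The remainder is a routine decomposition of a bilinear integral along the regions dictated by \eqref{p1}.
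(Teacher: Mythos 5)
Your proof follows exactly the paper's strategy: bound each of the three gain/loss terms in $x^{-\alpha}$-weighted $L^1$ by the weighted loss term $J_n$ (using Fubini, $E\le 1$, and \eqref{p6c}), then estimate $J_n$ by splitting along the four regions of \eqref{p1} and invoking Lemma~\ref{lemb3} and \eqref{p21}, concluding by integration in time. The only cosmetic difference is that you retain the factor $1/2$ a bit longer, giving the slightly sharper constant $(3+\beta_{-2\alpha})/2$ where the paper settles for $2+\beta_{-2\alpha}$; this is immaterial.
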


\begin{proof}
Let $t\in [0,T]$. By \eqref{p1}, \eqref{p21}, \eqref{p25a}, and \eqref{p25b}, 
\begin{align*}
\int_0^\infty x^{-\alpha} \mathcal{D}_n(f_n)(t,x)\ dx & \le k_1 \int_0^1\int_0^1 x^{-2\alpha} y^{-\alpha} f_n(t,x) f_n(t,y)\ dxdy \\
& \qquad + 2k_1 \int_0^1\int_1^\infty x^{-2\alpha} y f_n(t,x) f_n(t,y)\ dxdy \\
& \qquad + k_1 \int_1^\infty \int_1^\infty x^{1-\alpha} y f_n(t,x) f_n(t,y)\ dxdy \\
& \le k_1 M_{-2\alpha}(f_n(t)) M_{-\alpha}(f_n(t)) + 2k_1 M_{-2\alpha}(f_n(t)) M_1(f_n(t)) \\
& \qquad + k_1 M_1(f_n(t))^2 \\
& \le k_1 C_{\ref{cst1}}(T)^2 + 2 k_1 \varrho C_{\ref{cst1}}(T) + k_1 \varrho^2\,.
\end{align*} 
Next, using Fubini's theorem,
\begin{align*}
\int_0^\infty x^{-\alpha} \mathcal{B}_{c,n}(f_n)(t,x)\ dx & \le \frac{1}{2} \int_0^\infty \int_0^\infty (x+y)^{-\alpha} K_n(x,y) f_n(t,x) f_n(t,y)\ dydx \\
& \le \int_0^\infty x^{-\alpha} \mathcal{D}_n(f_n)(t,x)\ dx\,. 
\end{align*}
Finally, using again Fubini's theorem along with \eqref{p6c}, 
\begin{align*}
\int_0^\infty x^{-\alpha} \mathcal{B}_{b,n}(f_n)(t,x)\ dx & \le \frac{1}{2} \int_0^\infty \int_0^\infty \left( \int_0^{y+z} x^{-\alpha} b(x,y,z)\ dx \right) K_n(y,z) f_n(t,y) f_n(t,z)\ dzdy \\
& \le \frac{\beta_{-2\alpha}}{2} \int_0^\infty \int_0^\infty (y+z)^{-\alpha} K_n(y,z) f_n(t,y) f_n(t,z)\ dydz \\
& \le \beta_{-2\alpha} \int_0^\infty x^{-\alpha} \mathcal{D}_n(f_n)(t,x)\ dx\,. 
\end{align*}
Combining the above three estimates with \eqref{cecbt1} gives
\begin{align*}
\int_0^\infty x^{-\alpha} |\partial_t f_n(t,x)|\ dx & \le \left( 2 + \beta_{-2\alpha} \right) \int_0^\infty x^{-\alpha} \mathcal{D}_n(f_n)(t,x)\ dx \\
& \le k_1 \left( 2 + \beta_{-2\alpha} \right) \left( C_{\ref{cst1}}(T)+ \varrho \right)^2\,,
\end{align*}
from which Lemma~\ref{lemb6} readily follows.
\end{proof}

Gathering the outcome of Lemma~\ref{lemb5} and Lemma~\ref{lemb6} provides the relative compactness of the sequence $(f_n)_{n\ge 1}$ in suitable weighted $L^1$-spaces.

\begin{proposition}\label{propb7}
Let $T>0$. 
\begin{itemize}
\item[(a)] Assume further that $K$ satisfies \eqref{p3} and set $V(x) := \max\{ x^{-\alpha} , r(x)\}$ for $x\in (0,\infty)$. Then $(f_n)_{n\ge 1}$ is relatively compact in $C([0,T],X_{V,w})$.
\item[(b)] Assume further that $K$ and $f^{in}$ satisfy \eqref{p2} and \eqref{pj3}, respectively, where $\varphi\in C^2([0,\infty))$ is a convex function endowed with the following properties: $\varphi(0)=\varphi'(0)=0$, $\varphi'$ is a concave function which is positive in $(0,\infty)$, and 
\begin{equation}
\lim_{x\to\infty} \varphi'(x) = \lim_{x\to\infty} \frac{\varphi(x)}{x} = \infty\,. \label{pj4}
\end{equation}
Setting $V(x) := \max\{x^{-\alpha},x\}$ for $x\in (0,\infty)$, $(f_n)_{n\ge 1}$ is relatively compact in $C([0,T],X_{V,w})$.
\end{itemize}
\end{proposition}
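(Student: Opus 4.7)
The plan is to combine a refined Arzel\`a--Ascoli-type theorem for $C([0,T],X_{V,w})$ with the three uniform estimates already established on the sequence $(f_n)_{n\ge 1}$: the first moment bound \eqref{p21}, the uniform integrability of $\{x^{-\alpha}f_n(t)\}$ in $L^1(dx)$ from Lemma~\ref{lemb5}, and the Lipschitz-in-time control on $f_n$ in $X_{-\alpha}$ from Lemma~\ref{lemb6}. Two ingredients are needed: (i) pointwise relative weak compactness of $\{f_n(t):n\ge 1,\,t\in[0,T]\}$ in $X_V$, and (ii) weak equicontinuity in time of $(f_n)$ against a separating family of continuous linear functionals.

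For (i), I would verify the uniform integrability and tightness of the family $\{Vf_n(t)\}_{n,t}$ in $L^1((0,\infty),dx)$ and invoke Dunford--Pettis. In both cases the weight $V$ reduces to $x^{-\alpha}$ on a neighbourhood of zero, so uniform integrability near the origin is immediate from Lemma~\ref{lemb5}, while uniform integrability on any subinterval of $(0,\infty)$ bounded away from zero follows from the boundedness of $V$ there together with the mass bound \eqref{p21}. Tightness at infinity is obtained differently in the two regimes: in part (a) the combination of \eqref{p3b} and \eqref{p21} gives
\begin{equation*}
\int_R^\infty V(x) f_n(t,x)\,dx \le \Bigl(\sup_{x\ge R}\frac{r(x)}{x}\Bigr)\varrho \xrightarrow[R\to\infty]{}0
\end{equation*}
uniformly in $(n,t)$, whereas in part (b) I would use Lemma~\ref{lemb4} together with the superlinearity \eqref{pj4} to get
\begin{equation*}
\int_R^\infty V(x) f_n(t,x)\,dx \le \Bigl(\sup_{x\ge R}\frac{x}{\varphi(x)}\Bigr) M_\varphi(f_n(t)) \xrightarrow[R\to\infty]{}0
\end{equation*}
uniformly in $(n,t)$, thanks to the $\varphi$-moment control supplied by Lemma~\ref{lemb4}.

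For (ii), the key observation is that for any $\phi\in C_c((0,\infty))$ the function $x\mapsto x^\alpha\phi(x)$ belongs to $L^\infty(0,\infty)$, so Lemma~\ref{lemb6} yields
\begin{equation*}
\Bigl|\int_0^\infty\phi(x)[f_n(t,x)-f_n(s,x)]\,dx\Bigr| \le \|x^\alpha\phi\|_\infty \int_0^\infty x^{-\alpha}|f_n(t,x)-f_n(s,x)|\,dx \le C(T)\,\|x^\alpha\phi\|_\infty\,|t-s|,
\end{equation*}
which is precisely Lipschitz equicontinuity in time against the separating class $C_c((0,\infty))$.

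To conclude I would use the fact that the weak topology of $X_V$ is metrizable when restricted to the sequentially weakly compact set identified in step~(i), thanks to the separability of $C_c((0,\infty))$ as a subset of the predual of $X_V$; a countable separating subfamily then metrizes the weak topology on that set. A standard Arzel\`a--Ascoli argument on the resulting metric space, combined with a Cantor diagonal extraction, produces a subsequence converging in $C([0,T],X_{V,w})$. The delicate part is packaging the small-volume and large-volume behaviours into a single weighted space $X_V$, since one has to track simultaneously the singularity near zero (via the $x^{-\alpha}$ estimate of Lemma~\ref{lemb5}) and the growth at infinity (controlled by $r$ in part (a) and by the superlinear moment $\varphi$ in part (b)); this is precisely why the weight $V$ is tailored to dominate both regimes at once.
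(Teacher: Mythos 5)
Your approach is sound and genuinely different from the paper's. The paper first applies an Arzel\`a--Ascoli variant (\cite[Theorem~A.3.1]{Vrab03}) to extract a limit in the coarser space $C([0,T],X_{-\alpha,w})$, using only Lemma~\ref{lemb5} and Lemma~\ref{lemb6}, and then upgrades the convergence to $C([0,T],X_{V,w})$ \emph{a posteriori}: for $\phi\in L^\infty(0,\infty)$ it splits $\int_0^\infty V(x)\phi(x)(f_{n_j}-f)(t,x)\,dx$ at a large cut-off $R$, controls the near part via the already-established $X_{-\alpha,w}$-convergence (since $x^\alpha V\mathbf{1}_{(0,R)}\in L^\infty$), and kills the tail uniformly in $(n,t)$ using \eqref{p21} together with \eqref{p3b} in case~(a), resp.\ \eqref{p27} together with \eqref{pj4} in case~(b). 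You instead build the weak compactness directly in $X_V$ by Dunford--Pettis plus tightness, metrize the weak topology on the resulting compact set, and then run Arzel\`a--Ascoli on the metric space. Both routes work; yours is more hands-on, the paper's is shorter because it delegates the metrization and Arzel\`a--Ascoli bookkeeping to the cited theorem in the simpler space $X_{-\alpha,w}$.

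One step of your uniform-integrability argument is, however, incorrect as written. You claim that uniform integrability of $\{Vf_n(t)\}$ on an interval $[\delta,R]$ bounded away from zero and infinity follows from ``the boundedness of $V$ there together with the mass bound \eqref{p21}''. Boundedness of $V$ plus a uniform $L^1$ bound on $f_n$ only gives a uniform $L^1$ bound on $Vf_n$, not uniform integrability: for instance $f_n=n\mathbf{1}_{(1,1+1/n)}$ satisfies a mass bound, yet $\int_A f_n$ fails to vanish uniformly as $|A|\to 0$. The correct justification again rests on Lemma~\ref{lemb5}: for $A\subset[\delta,R]$,
\begin{equation*}
\int_A V(x)\,f_n(t,x)\,dx \le \Bigl(\sup_{[\delta,R]}V\Bigr)\,\delta^{\alpha}\int_A x^{-\alpha}f_n(t,x)\,dx\,,
\end{equation*}
and the right-hand side tends to zero uniformly in $n$ and $t$ as $|A|\to 0$ by Lemma~\ref{lemb5}. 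With this repair your argument goes through. Finally, the Cantor diagonal you invoke at the end is not needed for the Proposition itself, which concerns a fixed $T$; it is used afterwards in the proof of Theorem~\ref{tha1} to obtain a single subsequence for all $T>0$.
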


\begin{proof} Let $t\in [0,T]$. According to Lemma~\ref{lemb6}, $(f_n(t))_{n\ge 1}$ is equicontinuous in $X_{-\alpha}$ for its norm-topology and is thus also weakly equicontinuous in $X_{-\alpha,w}$. Consequently, due to Lemma~\ref{lemb5}, we are in a position to apply a variant of the Arzel\`a-Ascoli theorem, see \cite[Theorem~A.3.1]{Vrab03}, to conclude that $(f_n)_{n\ge 1}$ is relatively compact in $C([0,T],X_{-\alpha,w})$; that is, there are $f\in C([0,T],X_{-\alpha,w})$ and a subsequence $(f_{n_j})_{j\ge 1}$ of $(f_n)_{n\ge 1}$ such that
\begin{equation}
\lim_{j\to\infty} \sup_{t\in [0,T]} \left| \int_0^\infty x^{-\alpha} (f_{n_j}-f)(t,x) \phi(x)\ dx \right| = 0 \label{p221}
\end{equation}
for all $\phi\in L^\infty(0,\infty)$.

\noindent\textbf{Case~(a):} For $t\in [0,T]$ and $R>1$, we deduce from \eqref{p21} and \eqref{p221} that
\begin{equation*}
\int_0^R x f(t,x)\ dx = \int_0^R x^{1+\alpha} x^{-\alpha} f(t,x)\ dx = \lim_{j\to\infty} \int_0^R x f_{n_j}(t,x)\ dx \le \varrho\,.
\end{equation*}
We then let $R\to\infty$ in the previous inequality and use Fatou's lemma to conclude that \begin{equation}
f(t)\in X_1 \;\;\text{ with }\;\; M_1(f(t))\le \varrho\,, \qquad t\in [0,T]\,. \label{p222}
\end{equation} 

Consider now $\phi\in L^\infty(0,\infty)$, $t\in [0,T]$, $j\ge 1$, and $R>1$. Then, by \eqref{p21} and \eqref{p222}, 
\begin{align*}
\int_R^\infty V(x) (f_{n_j}+f)(t,x)\ dx & \le \left( \frac{1}{R^{1+\alpha}} + \sup_{x>R}\left\{ \frac{r(x)}{x} \right\} \right) \int_R^\infty x (f_{n_j}+f)(t,x)\ dx \\
& \le 2\varrho \left( \frac{1}{R^{1+\alpha}} + \sup_{x>R}\left\{ \frac{r(x)}{x} \right\} \right)\,,
\end{align*}
so that
\begin{align*}
\left| \int_0^\infty V(x) (f_{n_j}-f)(t,x) \phi(x)\ dx \right| & \le \left| \int_0^R V(x) (f_{n_j}-f)(t,x) \phi(x)\ dx \right| \\
& \qquad +  \|\phi\|_{L^\infty(0,\infty)} \int_R^\infty V(x) (f_{n_j}+f)(t,x)\ dx  \\
& \le \left| \int_0^R x^{-\alpha} x^\alpha V(x) (f_{n_j}-f)(t,x) \phi(x)\ dx \right| \\
& \qquad + 2 \varrho \left( \frac{1}{R^{1+\alpha}} + \sup_{x>R}\left\{ \frac{r(x)}{x} \right\} \right)  \|\phi\|_{L^\infty(0,\infty)} \,.
\end{align*}
Since $x\mapsto x^\alpha V(x) \mathbf{1}_{(0,R)}(x)$ belongs to $L^\infty(0,\infty)$, we infer from \eqref{p221} that
\begin{align*}
& \limsup_{j\to\infty} \sup_{t\in [0,T]} \left\{ \left| \int_0^\infty V(x) (f_{n_j}-f)(t,x) \phi(x)\ dx \right| \right\} \\
& \qquad\qquad\qquad \le 2 \varrho \left( \frac{1}{R^{1+\alpha}} + \sup_{x>R}\left\{ \frac{r(x)}{x} \right\} \right)  \|\phi\|_{L^\infty(0,\infty)} \,.
\end{align*}
We may let $R\to \infty$ in the above inequality and deduce from \eqref{p3b} that
\begin{equation*}
\lim_{j\to\infty} \sup_{t\in [0,T]} \left\{ \left| \int_0^\infty V(x) (f_{n_j}-f)(t,x) \phi(x)\ dx \right| \right\} = 0 \,,
\end{equation*}
and we have thus shown that $(f_{n_j})_{j\ge 1}$ converges to $f$ in $C([0,T],X_{V,w})$.

\noindent\textbf{Case~(b):} As in Case~(a), we deduce from \eqref{p27} and \eqref{p221} that, for $t\in [0,T]$ and $R>1$,
\begin{equation*}
\int_0^R \varphi(x) f(t,x)\ dx = \int_0^R x^{\alpha} \varphi(x) x^{-\alpha} f(t,x)\ dx = \lim_{j\to\infty} \int_0^R \varphi(x) f_{n_j}(t,x)\ dx \le C_{\ref{cst2}}(T,\varphi)\,.
\end{equation*}
We then let $R\to\infty$ in the previous inequality and use Fatou's lemma to conclude that \begin{equation}
f(t)\in X_\varphi \;\;\text{ with }\;\; M_\varphi(f(t))\le C_{\ref{cst2}}(T,\varphi)\,, \qquad t\in [0,T]\,. \label{p223}
\end{equation}
Consider now $\phi\in L^\infty(0,\infty)$, $t\in [0,T]$, $j\ge 1$, and $R>1$. We argue as in Case~(a) and infer from \eqref{p27} and \eqref{p223} that
\begin{align*}
\left| \int_0^\infty V(x) (f_{n_j}-f)(t,x) \phi(x)\ dx \right| & \le \left| \int_0^R V(x) (f_{n_j}-f)(t,x) \phi(x)\ dx \right| \\
& \qquad + 2 C_{\ref{cst2}}(T,\varphi) \sup_{x>R}\left\{ \frac{x}{\varphi(x)} \right\} \|\phi\|_{L^\infty(0,\infty)} \,.
\end{align*}
Again as in Case~(a), we may first pass to the limit $j\to\infty$ with the help of \eqref{p221} and afterwards let $R\to\infty$ with the help of \eqref{pj4}, so as to end up with
\begin{equation*}
\lim_{j\to\infty} \sup_{t\in [0,T]} \left\{ \left| \int_0^\infty V(x) (f_{n_j}-f)(t,x) \phi(x)\ dx \right| \right\} = 0 \,,
\end{equation*}
as claimed.
\end{proof}

\begin{proof}[Proof of Theorem~\ref{tha1}~(a)] Set $V(x) = \max\{x^{-\alpha} , r(x)\}$ for $x\in (0,\infty)$. By Proposition~\ref{propb7}~(a), the sequence $(f_n)_{n\ge 1}$ is relatively compact in $C([0,T],X_{V,w})$ for each $T>0$. A diagonal process then guarantees the existence of $f\in C([0,\infty),X_{V,w})$  and a subsequence of $(f_n)_{n\ge 1}$ (not relabeled) such that
\begin{equation}	
f_n \longrightarrow f \;\;\text{ in }\;\; C([0,T],X_{V,w}) \;\;\text{ for all }\;\; T>0\,. \label{p224}
\end{equation} 
A first consequence of \eqref{p21}, the non-negativity of each $f_n$, $n\ge 1$, and \eqref{p224} is that
\begin{equation*}
f(t)\in X_1^+ \;\;\text{ with }\;\; M_1(f(t))\le\varrho\,, \qquad t\ge 0\,, 
\end{equation*}
the proof of the upper bound of the first moment being the same as that of \eqref{p222}. Also, for $T>0$, $t\in [0,T]$, and $\varepsilon\in (0,1)$, we infer from \eqref{p25a} and \eqref{p224} that	
\begin{equation*}
\int_\varepsilon^\infty x^{-2\alpha} f(t,x)\ dx = \lim_{n\to\infty} \int_\varepsilon^\infty x^{-2\alpha} f_n(t,x)\ dx \le C_{\ref{cst1}}(T)\,.
\end{equation*}
Letting $\varepsilon\to 0$ in the previous inequality, we deduce from Fatou's lemma that $M_{-2\alpha}(f(t))\le C_{\ref{cst1}}(T)$. Consequently, $f$ satisfies \eqref{pa1} and \eqref{pa2}.	

It remains to check that $f$ satisfies the weak formulation \eqref{pa3} of \eqref{cecb}. To this end, consider $\phi\in L^\infty(0,\infty)$ and $t>0$. Since $r(x)\ge 1$ for all $x\in (0,\infty)$ by \eqref{p3}, there holds $1\le V$ and it easily follows from \eqref{p20b} and \eqref{p224} that
\begin{equation}
\lim_{n\to\infty} \int_0^\infty \phi(x) [f_n(t,x) - f_n^{in}(x)]\ dx = \int_0^\infty \phi(x) [f(t,x) - f^{in}]\ dx\,. \label{p230}
\end{equation}
We next integrate \eqref{p9} over $(0,t)$ and obtain
\begin{equation}
\int_0^\infty \phi(x) [f_n(t,x) - f_n^{in}(x)]\ dx = \frac{1}{2} \int_0^t \int_0^\infty \int_0^\infty \zeta_\phi(x,y) K_n(x,y) f_n(s,x) f_n(s,y)\ dydxds\,. \label{p226}
\end{equation}
On the one hand, it follows from \eqref{p224} that
\begin{equation}
F_n \longrightarrow F \;\;\text{ in }\;\; C([0,T],X_{0,w}\times X_{0,w}) \;\;\text{ for all }\;\; T>0\,, \label{p227}
\end{equation}
where
\begin{equation*}
F_n(t,x,y) := V(x) V(y) f_n(t,x) f_n(t,y) \;\;\text{ and }\;\; F(t,x,y) := V(x) V(y) f(t,x) f(t,y)
\end{equation*}
for $(t,x,y)\in [0,\infty)\times (0,\infty)^2$. 
On the other hand, since $K(x,y)\le (1+k_1) V(x) V(y)$ by \eqref{p1} and \eqref{p3a} and
\begin{equation*}
|\zeta_\phi(x,y)| \le \left( 3 + \beta_{-2\alpha} \right) \|\phi\|_{L^\infty(0,\infty)} 
\end{equation*}
by \eqref{p6d} and \eqref{p7a}, we see that 
\begin{equation}
\frac{|\zeta_\phi(x,y)| K_n(x,y)}{V(x) V(y)} \le (1+k_1) \left( 3 + \beta_{-2\alpha} \right) \|\phi\|_{L^\infty(0,\infty)} \,, \qquad (x,y)\in (0,\infty)^2\,, \ n\ge 1\,, \label{p228}
\end{equation}
while \eqref{p1} and \eqref{p20a} ensure that
\begin{equation}
\lim_{n\to\infty} \frac{\zeta_\phi(x,y) K_n(x,y)}{V(x) V(y)} = \frac{\zeta_\phi(x,y) K(x,y)}{V(x) V(y)} \,, \qquad (x,y)\in (0,\infty)^2\,. \label{p229}
\end{equation}
Owing to \eqref{p227}, \eqref{p228}, and \eqref{p229}, we may invoke \cite[Proposition~2.61]{FoLe07} to conclude that 
\begin{align*}
& \lim_{n\to \infty} \int_0^t \int_0^\infty \int_0^\infty \frac{\zeta_\phi(x,y) K_n(x,y)}{V(x) V(y)} F_n(s,x,y)\ dxdyds \\
& \hspace{2cm} = \int_0^t \int_0^\infty \int_0^\infty \frac{\zeta_\phi(x,y) K(x,y)}{V(x) V(y)} F(s,x,y)\ dxdyds\,.
\end{align*}
Equivalently,
\begin{align*}
& \lim_{n\to \infty} \int_0^t \int_0^\infty \int_0^\infty \zeta_\phi(x,y) K_n(x,y) f_n(s,x) f_n(s,y)\ dxdyds \\
& \hspace{2cm} = \int_0^t \int_0^\infty \int_0^\infty \zeta_\phi(x,y) K(x,y) f(s,x) f(s,y)\ dxdyds\,.
\end{align*}
Combining the above identity with \eqref{p230} and \eqref{p226} ensures that $f$ satisfies \eqref{pa3} and completes the proof.
\end{proof}

\begin{proof}[Proof of Theorem~\ref{tha1}~(b)]
We first recall that, since $ x\mapsto x\in L^1((0,\infty),f^{in}(x) dx)$, a refined version of the de la Vall\'ee-Poussin theorem, see \cite{Le77} and \cite[Theorem~7.1.6]{BLL19}, ensures that there is a convex function $\psi\in C^2([0,\infty))$ such that
\begin{equation}
M_\psi(f^{in}) = \int_0^\infty \psi(x) f^{in}(x)\ dx < \infty \,, \label{pi4}
\end{equation}
and $\psi$ has the following properties: $\psi(0)=\psi'(0)=0$, $\psi'$ is a concave function which is positive in $(0,\infty)$, and 
\begin{equation}
\lim_{x\to\infty} \psi'(x) = \lim_{x\to\infty} \frac{\psi(x)}{x} = \infty\,. \label{pi5}
\end{equation}
We set $V(x)=\max\{x^{-\alpha} , x\}$ for $x\in (0,\infty)$. Thanks to \eqref{pi4} and \eqref{pi5}, the assumptions required to apply Proposition~\ref{propb7} are satisfied with $\varphi=\psi$ and we infer from Proposition~\ref{propb7} that
\begin{equation}	
f_n \longrightarrow f \;\;\text{ in }\;\; C([0,T],X_{V,w}) \;\;\text{ for all }\;\; T>0\,. \label{p231}
\end{equation} 
Since $V(x)\ge 1$ for $x\in (0,\infty)$, a first consequence of \eqref{p231} is that
\begin{equation*}
f_n \longrightarrow f \;\;\text{ in }\;\; C([0,T],X_{0,w}\cap X_{1,w}) \;\;\text{ for all }\;\; T>0\,.
\end{equation*}
Together with the non-negativity of each $f_n$, $n\ge 1$, and \eqref{p21}, these convergences imply that $f$ satisfies \eqref{pa4} and \eqref{pa5}, since
\begin{equation*}
M_1(f(t)) = \lim_{n\to\infty} M_1(f_n(t)) = \lim_{n\to\infty} M_1(f_n^{in}) = M_1(f^{in})\,, \qquad t\ge 0\,.
\end{equation*}

We next notice that \eqref{p1} entails that $K(x,y)\le k_1 V(x) V(y)$ for $(x,y)\in (0,\infty)^2$ and proceed as in the proof of Theorem~\ref{tha1}~(a) to show that $f$ satisfies \eqref{pa3}, which completes the proof.
\end{proof}

\begin{proof}[Proof of Theorem~\ref{tha1}~(c)]
The proof of Theorem~\ref{tha1}~(c) is the same as that of Theorem~\ref{tha1}~(b), the additional bound in $X_2$ being derived with the help of Corollary~\ref{corb4.5} and the convergence \eqref{p231}.
\end{proof}

\subsection{Existence: $\alpha=0$}\label{sec2.2}

In this section, besides \eqref{p1}, \eqref{p40}, and \eqref{p7}, we further assume that $\alpha=0$, while the daughter distribution function $b$ satisfies \eqref{p5} and \eqref{p4}. We also assume that, besides \eqref{pi1},
\begin{equation}
f^{in}\in X_{-\theta}\,,\label{pi3}
\end{equation}
recalling that $\theta$ is defined in \eqref{p5a}. We begin with an estimate on $f_n$ in $X_0$ and $X_{-\theta}$.

\begin{lemma}\label{lemb2}
For all $n\ge 1$, $T_n=\infty$ and, for any $T>0$, there is $C_{\ref{cst1}}(T)>0$ such that 
	\begin{align}
	M_0(f_n(t)) & \le C_{\ref{cst1}}(T)\,, \qquad t\in [0,T]\,,\ n\ge 1\,, \label{p23} \\
	M_{-\theta}(f_n(t)) & \le C_{\ref{cst1}}(T)\,, \qquad t\in [0,T]\,,\ n\ge 1\,. \label{p24}
	\end{align}
\end{lemma}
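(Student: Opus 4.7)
The strategy parallels Lemma~\ref{lemb3}: derive differential inequalities for moments of $f_n$ by plugging specific test functions $\phi$ into \eqref{p9} and close them by Gronwall's lemma, exploiting the non-positivity of $\zeta_\phi$ on the block $(0,1)^2$ which the lower bound \eqref{p7b} on $E$ is designed to enforce. I would first bound $M_0(f_n)$ on $[0,T_n)$ (which incidentally upgrades $T_n<\infty$ to $T_n=\infty$ via the blow-up criterion \eqref{p200}) and then use this bound to control $M_{-\theta}(f_n)$.

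For the $M_0$-estimate, take $\phi\equiv 1$ in \eqref{p9}. Thanks to \eqref{p4a} and \eqref{p7a},
\[
\zeta_0(x,y) \le \beta_0 - 2 - (\beta_0 - 1) E(x,y), \qquad (x,y)\in (0,\infty)^2,
\]
and \eqref{p7b} (using $\beta_{-2\alpha}=\beta_0$ when $\alpha=0$) then gives $\zeta_0\le 0$ on $(0,1)^2$ and $\zeta_0\le\beta_0-2$ elsewhere. Splitting the double integral in \eqref{p9} over $(0,1)^2$, $(0,1)\times(1,\infty)\cup(1,\infty)\times(0,1)$ and $(1,\infty)^2$, bounding $K_n$ via \eqref{p1} with $\alpha=0$, and using the mass bound $M_1(f_n(t))\le\varrho$ from \eqref{p21}, a linear differential inequality of the form $\frac{d}{dt} M_0(f_n) \le C(M_0(f_n)+1)$ results. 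Gronwall's lemma, together with $M_0(f^{in})<\infty$ (a direct consequence of $f^{in}\in X_{-\theta}\cap X_1^+$ via the crude bounds $1\le x^{-\theta}$ on $(0,1]$ and $1\le x$ on $[1,\infty)$), then yields \eqref{p23}. This bound rules out the blow-up alternative \eqref{p200}, so $T_n=\infty$.

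For the $M_{-\theta}$-estimate, take $\phi=V_{-\theta}$ in \eqref{p9}. Condition \eqref{p4b} combined with the elementary inequality $(x+y)^{-\theta}\le (x^{-\theta}+y^{-\theta})/2$ yields
\[
\zeta_{-\theta}(x,y) \le \frac{1}{2}\max\{1,\beta_{-\theta}\}\,(x^{-\theta}+y^{-\theta}), \qquad (x,y)\in (0,\infty)^2.
\]
By symmetry the resulting double integral reduces to a constant times $\int_0^\infty\int_0^\infty x^{-\theta} K_n(x,y) f_n(t,x) f_n(t,y)\,dy\,dx$, which I split once more over the four blocks of \eqref{p1}. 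Using the bounds on $K_n$ from \eqref{p1} with $\alpha=0$, the inequality $x^{1-\theta}\le x$ on $[1,\infty)$ (recall $\theta\in(0,1)$), the mass bound \eqref{p21}, and the $M_0$ bound from the previous paragraph, I obtain a differential inequality of the form $\frac{d}{dt}M_{-\theta}(f_n) \le C(T)(M_{-\theta}(f_n)+1)$ on $[0,T]$; Gronwall's lemma and $M_{-\theta}(f^{in})<\infty$ then give \eqref{p24}.

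The main (mild) obstacle is the mismatch between the exponents of the moments to be controlled ($0$ and $-\theta$) and the single exponent $0$ (via $\beta_0$) appearing in the lower bound \eqref{p7b} on $E$. This prevents an analogous sign-control of $\zeta_{-\theta}$ on $(0,1)^2$, and also rules out the H\"older interpolation between $M_0$ and $M_1$ that was used for $M_{-\alpha}$ in Lemma~\ref{lemb3}, since $x^{-\theta}$ is unbounded near the origin. Hence the two-step structure above, with the $M_0$ estimate fed into the $M_{-\theta}$ estimate; the individual estimates themselves are close analogues of those carried out in the proof of Lemma~\ref{lemb3}.
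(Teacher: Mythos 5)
Your proof is correct and takes essentially the same route as the paper: first control $M_0(f_n)$ using the sign condition $\zeta_0\le 0$ on $(0,1)^2$ supplied by \eqref{p7b} (with $\beta_{-2\alpha}=\beta_0$), the block decomposition of $(0,\infty)^2$ dictated by \eqref{p1}, the mass bound \eqref{p21}, and Gronwall's lemma, which simultaneously rules out the blow-up alternative \eqref{p200}; then feed that $M_0$-bound into a second Gronwall argument for $M_{-\theta}(f_n)$ based on the estimate $\zeta_{-\theta}\lesssim x^{-\theta}+y^{-\theta}$ coming from \eqref{p4b} and the subadditivity of $z\mapsto z^{-\theta}$. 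The only deviation is a slightly cruder constant in your bound on $\zeta_{-\theta}$ (the paper keeps $2^{-\theta-1}$ from the Jensen step and uses $\beta_{-\theta}\ge 2^{-\theta}$ to absorb it, whereas you use the weaker $(x+y)^{-\theta}\le(x^{-\theta}+y^{-\theta})/2$), which has no bearing on the result.
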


\begin{proof}
 By \eqref{p4a}, 
\begin{equation*}
\zeta_0(x,y) \le E(x,y) + \beta_0 (1-E(x,y))- 2 = \beta_0- 2 -(\beta_0-1) E(x,y) 
\end{equation*}
for $(x,y)\in (0,\infty)^2$. Consequently, by \eqref{p7}, 
\begin{equation*}
\zeta_0(x,y)\le 0\,, \quad (x,y)\in (0,1)^2\,, \qquad \zeta_0(x,y) \le \beta_0\,, \quad (x,y)\in (0,\infty)^2\,,
\end{equation*}
and we infer from \eqref{p1}, \eqref{p9}, and the symmetry of $K_n$ and $\zeta_0$ that, for $t\in (0,T_n)$,
\begin{align*}
\frac{d}{dt} M_0(f_n(t)) & = \frac{1}{2} \int_0^1 \int_0^1 \zeta_0(x,y) K_n(x,y) f_n(t,x) f_n(t,y)\ dydx \\
& \qquad + \int_0^1 \int_1^\infty \zeta_0(x,y) K_n(x,y) f_n(t,x) f_n(t,y)\ dydx \\
& \qquad +  \frac{1}{2} \int_1^\infty \int_1^\infty \zeta_0(x,y) K_n(x,y) f_n(t,x) f_n(t,y)\ dydx \\
& \le k_1 \beta_0 \int_0^1 \int_1^\infty y f_n(t,x) f_n(t,y)\ dydx \\
& \qquad + \frac{k_1 \beta_0}{2} \int_1^\infty \int_1^\infty x y f_n(t,x) f_n(t,y)\ dydx \\
& \le k_1 \beta_0 M_1(f_n(t)) M_0(f_n(t)) + k_1 \beta_0 M_1(f_n(t))^2\,.
\end{align*}
Recalling \eqref{p21}, we deduce that
\begin{equation*}
\frac{d}{dt} M_0(f_n(t)) \le k_1 \beta_0 \varrho \left[\varrho +M_0(f_n(t)) \right]\,.
\end{equation*}
Hence, integrating with respect to time and using \eqref{p20b}, 
\begin{equation}
M_0(f_n(t)) \le M_0(f_n^{in}) e^{k_1 \beta_0\varrho t} + \varrho \left( e^{k_1 \beta_0\varrho t}- 1 \right) \le (\varrho + M_0(f^{in})) e^{k_1 \beta_0\varrho t} \label{p201}
\end{equation}
for $t\in [0,T_n)$. On the one hand, \eqref{p201} prevents the occurrence of \eqref{p200} and thereby implies that $T_n=\infty$, according to Proposition~\ref{propb1}. On the other hand, \eqref{p23} is a straightforward consequence of \eqref{p201}.

It next follows from \eqref{p4b} and the convexity of $x\mapsto x^{-\theta}$ that, for $t\ge 0$,
\begin{align*}
\zeta_{-\theta}(x,y) & = 2^{-\theta} E(x,y) \left( \frac{x+y}{2} \right)^{-\theta} + (1-E(x,y)) \int_0^{x+y} z^{-\theta} b(z,x,y)\ dz - x^{-\theta} - y^{-\theta} \\
& \le 2^{-\theta-1} E(x,y) \left( x^{-\theta} + y^{-\theta} \right) + \frac{\beta_{-\theta}}{2} (1-E(x,y)) \left( x^{-\theta} + y^{-\theta} \right) - x^{-\theta} - y^{-\theta} \\
& \le  \frac{\beta_{-\theta}}{2} \left( x^{-\theta} + y^{-\theta} \right) \le \beta_{-\theta} \left( x^{-\theta} + y^{-\theta} \right)\,.
\end{align*}
Consequently, by \eqref{p1}, \eqref{p21}, and \eqref{p9}, 
\begin{align*}
\frac{d}{dt} M_{-\theta}(f_n(t)) & \le  \frac{\beta_{-\theta}}{2} \int_0^\infty \int_0^\infty \left( x^{-\theta} + y^{-\theta} \right) K(x,y) f_n(t,x) f_n(t,y)\ dydx \\
& = \beta_{-\theta} \int_0^\infty \int_0^\infty x^{-\theta} K(x,y) f_n(t,x) f_n(t,y)\ dydx \\
& \le k_1 \beta_{-\theta} \int_0^1 \int_0^1 x^{-\theta} f_n(t,x) f_n(t,y)\ dydx \\
& \qquad + k_1 \beta_{-\theta} \int_0^1 \int_1^\infty x^{-\theta} y f_n(t,x) f_n(t,y)\ dydx \\
& \qquad + k_1 \beta_{-\theta} \int_1^\infty \int_0^1 x^{1-\theta} f_n(t,x) f_n(t,y)\ dydx \\
& \qquad + k_1 \beta_{-\theta} \int_1^\infty \int_1^\infty x^{1-\theta} y f_n(t,x) f_n(t,y)\ dydx \\
& \le k_1 \beta_{-\theta} \left( M_0(f_n(t)) M_{-\theta}(f_n(t)) + M_1(f_n(t)) M_{-\theta}(f_n(t)) \right) \\
& \qquad + k_1 \beta_{-\theta} \left( M_0(f_n(t)) M_1(f_n(t)) + M_1(f_n(t))^2 \right) \\
& \le k_1 \beta_{-\theta} \left[ \varrho + M_0(f_n(t)) \right] \left[ \varrho + M_{-\theta}(f_n(t)) \right]\,.
\end{align*}
Hence, after integration with respect to time,
\begin{align*}
M_{-\theta}(f_n(t)) & \le M_{-\theta}(f_n^{in}) \exp\left\{ k_1 \beta_{-\theta} \int_0^t \left[ \varrho + M_0(f_n(s)) \right]\ ds \right\} \\
& \qquad + \varrho \left( \exp\left\{ k_1 \beta_{-\theta} \int_0^t \left[ \varrho + M_0(f_n(s)) \right]\ ds \right\} - 1 \right)\,.
\end{align*}
Since $M_{-\theta}(f_n^{in}) \le M_{-\theta}(f^{in})<\infty$ by \eqref{p20b} and \eqref{pi3}, \eqref{p24} readily follows from \eqref{p23} and the above inequality.
\end{proof}

\begin{proof}[Proof of Theorem~\ref{tha2}] A careful inspection of the proofs of Lemma~\ref{lemb4}, Corollary~\ref{corb4.5}, Lemma~\ref{lemb5}, Lemma~\ref{lemb6}, and Proposition~\ref{propb7} reveals that these results are still valid for $\alpha=0$, every recourse to Lemma~\ref{lemb3} being replaced by Lemma~\ref{lemb2}. The remainder of the proof of Theorem~\ref{tha2} is then the same as that of Theorem~\ref{tha1}, replacing again $\alpha$ by zero and Lemma~\ref{lemb3} by Lemma~\ref{lemb2}.
\end{proof}

\subsection{Existence: $\alpha=0$}\label{sec2.4}

In this section, besides \eqref{p1}, \eqref{p40}, and \eqref{p7}, we further assume that $\alpha=0$, while the daughter distribution function $b$ satisfies \eqref{p5}, \eqref{p4a}, and, either \eqref{paa4a}, or \eqref{paa4}. Since $f^{in}\in X_0^+$ by \eqref{pi1}, it follows from Lemma~\ref{leap1} that there is a non-negative convex and non-increasing function $\Phi$ such that
\begin{equation}
\int_0^\infty \Phi(x) f^{in}(x)\ dx < \infty\,,\label{pv1}
\end{equation}
and
\begin{equation}
\lim_{x\to 0} \Phi(x) =  \infty\,, \qquad \lim_{x\to 0} x^\theta \Phi(x) = 0\,, \qquad x\mapsto x^\theta \Phi(x) \;\text{ is non-decreasing,} \label{pv2}
\end{equation}
recalling that $\theta$ is defined in \eqref{p5a}. As in the previous section, we begin with an estimate on $f_n$ for small sizes, but here in $X_0$ and $X_{\Phi}$.

\begin{lemma}\label{lemv2}
For all $n\ge 1$, $T_n=\infty$ and, for any $T>0$, there is $C_{\ref{cst1}}(T)>0$ such that 
\begin{align}
M_0(f_n(t)) & \le C_{\ref{cst1}}(T)\,, \qquad t\in [0,T]\,,\ n\ge 1\,, \label{pv3} \\
M_{\Phi}(f_n(t)) & \le C_{\ref{cst1}}(T)\,, \qquad t\in [0,T]\,,\ n\ge 1\,. \label{pv4}
\end{align}
\end{lemma}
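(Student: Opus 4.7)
The plan is to parallel the proof of Lemma~\ref{lemb2}, which handles the analogous estimates in $X_0$ and $X_{-\theta}$, replacing the role of $V_{-\theta}$ by the more general weight $\Phi$ produced by Lemma~\ref{leap1}.

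First, the bound \eqref{pv3} on $M_0(f_n(t))$ together with the non-extinction property $T_n = \infty$ follow verbatim from the argument of Lemma~\ref{lemb2}, since that part of the proof relies only on \eqref{p1}, \eqref{p4a}, and \eqref{p7}, all of which are in force here. The blow-up alternative \eqref{p200} is thereby excluded.

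For the new estimate \eqref{pv4}, the key is a pointwise bound on $\zeta_\Phi(x,y)$. Since $\Phi$ is non-increasing, $\Phi(x+y) \le \min\{\Phi(x),\Phi(y)\} \le (\Phi(x)+\Phi(y))/2$. For the fragment term, I would exploit the monotonicity of $z \mapsto z^\theta \Phi(z)$ from \eqref{pv2}: under hypothesis \eqref{paa4a} it gives $\Phi(z) \le (x+y)^\theta \Phi(x+y) z^{-\theta}$ for $z \in (0,x+y)$, whence
\begin{equation*}
\int_0^{x+y} \Phi(z) b(z,x,y)\ dz \le (x+y)^\theta \Phi(x+y) \int_0^{x+y} z^{-\theta} b(z,x,y)\ dz \le \frac{\beta_{-\theta}'}{2} \Phi(x+y).
\end{equation*}
Under hypothesis \eqref{paa4}, I would instead split via \eqref{paa4b}, use $z^\theta \Phi(z) \le x^\theta \Phi(x)$ on $(0,x)$ together with the symmetric bound on $(0,y)$, and invoke \eqref{paa4d} to obtain
\begin{equation*}
\int_0^{x+y} \Phi(z) b(z,x,y)\ dz \le \frac{\beta_{-\theta}'}{2} \bigl( \Phi(x) + \Phi(y) \bigr).
\end{equation*}
Either way, combining with $0 \le E \le 1$ and the upper bound on $\Phi(x+y)$ yields
\begin{equation*}
\zeta_\Phi(x,y) \le C_0 \bigl( \Phi(x) + \Phi(y) \bigr), \qquad (x,y) \in (0,\infty)^2,
\end{equation*}
for some constant $C_0$ depending only on $\beta_{-\theta}'$.

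Inserting this into \eqref{p9} and splitting the double integral along the four subdomains of \eqref{p1}, I would estimate each piece using $\Phi(x) \le \Phi(1)$ for $x \ge 1$ together with the moment bounds \eqref{p21} and \eqref{pv3}: on $(0,1)^2$ the kernel is bounded by $k_1$ and the contribution is controlled by $C M_0(f_n(t)) M_\Phi(f_n(t))$; on the mixed regions one uses $K_n(x,y) \le k_1 y$ (respectively $k_1 x$) together with the boundedness of $\Phi$ on $(1,\infty)$; on $(1,\infty)^2$ the integrand is dominated by $2 k_1 \Phi(1) x y$, giving a term controlled by $M_1(f_n(t))^2$. Combining these bounds leads to a differential inequality of the form
\begin{equation*}
\frac{d}{dt} M_\Phi(f_n(t)) \le C(T) \bigl( 1 + M_\Phi(f_n(t)) \bigr), \qquad t \in (0,T),
\end{equation*}
and \eqref{pv4} then follows from Gronwall's lemma together with \eqref{pv1}, which guarantees $M_\Phi(f_n^{in}) \le M_\Phi(f^{in}) < \infty$.

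The main obstacle is the pointwise control of the daughter integral $\int_0^{x+y} \Phi(z) b(z,x,y)\ dz$: the singular behaviour of $\Phi$ near the origin is precisely offset by the integrability of $z^{-\theta} b$ provided by \eqref{paa4a} or \eqref{paa4d}, and this matching is made possible by the monotonicity of $z \mapsto z^\theta \Phi(z)$ issuing from the refined de la Vall\'ee Poussin construction of Lemma~\ref{leap1}. Once this pointwise bound is in hand, the remainder reduces to routine splitting and Gronwall's inequality, in direct analogy with the proof of Lemma~\ref{lemb2}.
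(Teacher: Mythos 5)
Your proposal is correct and follows essentially the same route as the paper's proof: the bound on $M_0(f_n)$ and the exclusion of finite-time blow-up are carried over verbatim from Lemma~\ref{lemb2}, and the key step is identical, namely using the monotonicity of $z\mapsto z^\theta\Phi(z)$ from Lemma~\ref{leap1} together with \eqref{paa4a} or \eqref{paa4} to obtain the pointwise bound $\zeta_\Phi(x,y)\le C\bigl(\Phi(x)+\Phi(y)\bigr)$, after which the same four-region splitting of the double integral and Gronwall's lemma close the estimate.
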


\begin{proof}
Since $b$ still satisfies \eqref{p4a}, the proof of \eqref{pv3}, along with that of $T_n=\infty$, is the same as that performed in Lemma~\ref{lemb2}
	
Next, we recall that, for $(x,y)\in (0,\infty)^2$,
\begin{equation*}
\zeta_{\Phi}(x,y) = E(x,y) \Phi(x+y) + (1-E(x,y)) \int_0^{x+y} \Phi(z) b(z,x,y)\ dz - \Phi(x) - \Phi(y)\,.
\end{equation*}

\noindent\textbf{Case~1.} If $b$ satisfies \eqref{paa4a}, then \eqref{paa4a} and the monotonicity \eqref{pv2} of $z\mapsto z^\theta \Phi(z)$ imply that
\begin{align*}
\int_0^{x+y} \Phi(z) b(z,x,y)\ dz & = \int_0^{x+y} z^\theta \Phi(z) z^{-\theta} b(z,x,y)\ dz \\
& \le (x+y)^\theta \Phi(x+y) \int_0^{x+y} z^{-\theta} b(z,x,y)\ dz \\
& \le \frac{\beta_{-\theta}'}{2} \Phi(x+y)\,.
\end{align*}
Therefore, since $\Phi$ is non-increasing,
\begin{align*}
\zeta_\Phi(x,y) & \le \left[ E(x,y) + \frac{\beta_{-\theta}'}{2} (1-E(x,y)) \right] \Phi(x+y) - \Phi(x) - \Phi(y) \\
& \le  \left[ E(x,y) + \frac{\beta_{-\theta}'}{2} (1-E(x,y)) \right] \frac{\Phi(x)+\Phi(y)}{2} - \Phi(x) - \Phi(y) \\ 
& \le \frac{\beta_{-\theta}'}{2} \left[ \Phi(x) + \Phi(y) \right]\,.
\end{align*}

\noindent\textbf{Case~2.} If $b$ satisfies \eqref{paa4}, then we infer from \eqref{paa4} and the monotonicity \eqref{pv2} of $z\mapsto z^\theta \Phi(z)$ that
\begin{align*}
\int_0^{x+y} \Phi(z) b(z,x,y)\ dz & = \int_0^x z^\theta \Phi(z) z^{-\theta} \bar{b}(z,x,y)\ dz + \int_0^y z^{\theta} \Phi(z) z^{-\theta} \bar{b}(z,y,x)\ dz \\
& \le x^\theta\Phi(x) \int_0^x z^{-\theta} \bar{b}(z,x,y)\ dz + y^\theta \Phi(y) \int_0^y z^{-\theta} \bar{b}(z,y,x)\ dz \\
& \le \frac{\beta_{-\theta}'}{2} \left[ \Phi(x) + \Phi(y) \right]\,.
\end{align*}
Consequently, using again the monotonicity of $\Phi$,
\begin{align*}
\zeta_\Phi(x,y) & \le E(x,y) \frac{\Phi(x)+\Phi(y)}{2} + \left( \frac{\beta_{-\theta}'}{2} (1-E(x,y)) - 1 \right) \left[ \Phi(x) + \Phi(y)\right] \\
& \le \frac{\beta_{-\theta}'}{2} \left[ \Phi(x) + \Phi(y) \right]\,.
\end{align*}

Summarizing, we have proved the following upper bound for $\zeta_\Phi$ in both cases:
\begin{equation}
\zeta_\Phi(x,y) \le  \frac{\beta_{-\theta}'}{2} \left[ \Phi(x) + \Phi(y) \right]\,, \qquad (x,y)\in (0,\infty)^2\,. \label{pv5}
\end{equation}
Recalling that $\alpha=0$, it then follows from \eqref{p1}, \eqref{p21}, \eqref{p9}, and the monotonicity of $\Phi$ that, for $t\in [0,T]$, 
\begin{align*}
\frac{d}{dt} M_\Phi(f_n(t)) & \le \frac{k_1 \beta_{-\theta}'}{4} \int_0^1 \int_0^1 \left[ \Phi(x) + \Phi(y) \right] f_n(t,x) f_n(t,y)\ dydx \\
& \qquad  + \frac{k_1 \beta_{-\theta}'}{4} \int_0^1 \int_1^\infty \left[ \Phi(x) + \Phi(y) \right] y f_n(t,x) f_n(t,y)\ dydx \\
& \qquad + \frac{k_1 \beta_{-\theta}'}{4} \int_1^\infty \int_0^1 \left[ \Phi(x) + \Phi(y) \right] x f_n(t,x) f_n(t,y)\ dydx \\
& \qquad + \frac{k_1 \beta_{-\theta}'}{4} \int_1^\infty \int_1^\infty \left[ \Phi(x) + \Phi(y) \right] xy f_n(t,x) f_n(t,y)\ dydx \\
& \le \frac{k_1 \beta_{-\theta}'}{2} M_0(f_n(t)) M_\Phi(f_n(t)) + \frac{\varrho k_1 \beta_{-\theta}'}{2} \left[ M_\Phi(f_n(t)) + \Phi(1) M_0(f_n(t)) \right] \\
& \qquad + \frac{k_1 \beta_{-\theta}' \Phi(1)}{2} \varrho^2\,.
\end{align*}
Hence, thanks to \eqref{pv3},
\begin{equation*}
\frac{d}{dt} M_\Phi(f_n(t)) \le C(T) \left[ 1 + M_\Phi(f_n(t)) \right]\,, \qquad t\in [0,T]\,,
\end{equation*}
and we deduce \eqref{pv4} from the previous inequality and Gronwall's lemma.
\end{proof}

\begin{proof}[Proof of Theorem~\ref{tha2b}] The proof of Theorem~\ref{tha2b} is the same as that of Theorem~\ref{tha2}, provided one replaces Lemma~\ref{lemb2} by Lemma~\ref{lemv2}. Note that, since $\Phi(x)\to\infty$ as $x\to 0$, the bound \eqref{pv4} prevents concentration of the sequence $(f_n)_{n\ge 1}$ at $x=0$ and thus plays here the same role as \eqref{p24} in the proof of Theorem~\ref{tha2}. 
\end{proof}

\subsection{Existence: $K(x,y) \le k_0(x+y)$}\label{sec2.3}

The specific behaviour of $K$ for small sizes implies an almost immediate bound on the total number of particles $M_0(f_n)$.

\begin{lemma}\label{lemb8}
For all $n\ge 1$, $T_n=\infty$ and, for any $T>0$, there is $C_{\ref{cst1}}(T)>0$ such that 
\begin{align}
M_0(f_n(t)) & \le C_{\ref{cst1}}(T)\,, \qquad t\in [0,T]\,, \ n\ge 1\,, \label{p31} \\
M_{-\theta}(f_n(t)) & \le C_{\ref{cst1}}(T)\,, \qquad t\in [0,T]\,, \ n\ge 1\,. \label{p32}
\end{align}
\end{lemma}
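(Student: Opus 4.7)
The plan is to exploit the linear growth bound \eqref{p400} on $K$ together with the mass bound $M_1(f_n(t)) \le \varrho$ from \eqref{p21} in order to close differential inequalities for $M_0(f_n)$ and $M_{-\theta}(f_n)$. The key observation is that, under \eqref{p400}, every double integral of the form $\int\!\int V(x) K(x,y) f_n(t,x) f_n(t,y)\,dy\,dx$ is controlled by $k_0[M_V(f_n) M_1(f_n) + M_{xV}(f_n) M_0(f_n)]$, and conservation of mass then feeds back to bound each of the two moments of interest. In contrast to Lemma~\ref{lemb2}, the argument will not rely on the dominating coagulation lower bound \eqref{p7b}: only \eqref{p4}, the non-negativity of $E$ from \eqref{p7a}, and the mass bound \eqref{p21} are needed.

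For \eqref{p31}, I first derive a constant upper bound on $\zeta_0$. From \eqref{p4a} and $E(x,y)\in[0,1]$, combined with $\beta_0 \ge 1$,
\[
\zeta_0(x,y) = E(x,y) + (1-E(x,y)) \int_0^{x+y} b(z,x,y)\, dz - 2 \le \beta_0 - 2 \le \beta_0\,.
\]
Inserting this bound in \eqref{p9} with $\phi \equiv 1$, using the symmetry of $K_n$ and $K_n \le k_0(x+y)$, I obtain
\[
\frac{d}{dt} M_0(f_n(t)) \le \frac{\beta_0 k_0}{2} \int_0^\infty\!\!\int_0^\infty (x+y) f_n(t,x) f_n(t,y)\,dy\,dx = \beta_0 k_0\, M_0(f_n(t)) M_1(f_n(t))\,.
\]
Together with $M_1(f_n(t)) \le \varrho$, Gronwall's lemma gives $M_0(f_n(t)) \le M_0(f_n^{in}) e^{\beta_0 k_0 \varrho t}$ on $[0,T_n)$. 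This rules out the blowup scenario \eqref{p200}, so Proposition~\ref{propb1} yields $T_n = \infty$, and \eqref{p31} follows since $M_0(f_n^{in}) \le M_0(f^{in}) < \infty$ (as $f^{in} \in X_{-\theta} \cap X_1 \subset X_0$ by the two-sided integrability comparison $f \le (x^{-\theta}+x) f$).

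For \eqref{p32}, I follow the $M_{-\theta}$-computation carried out in Lemma~\ref{lemb2}: the convexity of $z \mapsto z^{-\theta}$, together with \eqref{p4b} and $\beta_{-\theta} \ge 2^{-\theta}$, gives $\zeta_{-\theta}(x,y) \le \beta_{-\theta}(x^{-\theta} + y^{-\theta})$ without appealing to \eqref{p7b}. Inserting this in \eqref{p9} with $\phi(x) = x^{-\theta}$, and using \eqref{p400} and symmetry,
\[
\frac{d}{dt} M_{-\theta}(f_n(t)) \le \beta_{-\theta} k_0 \bigl[ M_{1-\theta}(f_n(t)) M_0(f_n(t)) + M_{-\theta}(f_n(t)) M_1(f_n(t)) \bigr]\,.
\]
Since $x^{1-\theta} \le 1 + x$ for $\theta \in (0,1)$, one has $M_{1-\theta}(f_n) \le M_0(f_n) + M_1(f_n)$; combined with \eqref{p21} and the bound \eqref{p31} already established, the above inequality reduces to $\tfrac{d}{dt} M_{-\theta}(f_n(t)) \le C(T) [1 + M_{-\theta}(f_n(t))]$ on $[0,T]$. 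Because $f^{in} \in X_{-\theta}$ by hypothesis and $f_n^{in} \le f^{in}$ by \eqref{p20b}, Gronwall's lemma delivers \eqref{p32}.

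No single step is delicate, the main conceptual point being that under \eqref{p400} the coagulation gain term is automatically tempered by conservation of mass, so a pointwise constant upper bound on $\zeta_0$ suffices to close the $M_0$ estimate, and the dominating coagulation hypothesis \eqref{p7b} can be dispensed with entirely.
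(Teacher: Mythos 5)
Your proposal is correct and follows essentially the same strategy as the paper: a pointwise bound $\zeta_0\le\beta_0$ together with $K_n\le k_0(x+y)$ and mass conservation closes a linear differential inequality for $M_0(f_n)$, which in turn excludes finite-time blowup; the $M_{-\theta}$ estimate then follows from the same $\zeta_{-\theta}\le\beta_{-\theta}(x^{-\theta}+y^{-\theta})$ bound and Gronwall. The only cosmetic difference is that for \eqref{p32} you expand $(x^{-\theta}+y^{-\theta})(x+y)$ directly and dominate $M_{1-\theta}$ by $M_0+M_1$, whereas the paper observes that \eqref{p400} implies \eqref{p1} with $\alpha=0$, $k_1=2k_0$, and simply reruns the four-region computation of Lemma~\ref{lemb2}; the two routes yield the same closed inequality.
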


\begin{proof}
 By \eqref{p4a},
\begin{equation*}
\zeta_0(x,y) \le E(x,y) + \beta_0 (1-E(x,y)) - 2 = \beta_0- 2 - (\beta_0-1) E(x,y)\le \beta_0\,, \qquad (x,y)\in (0,\infty)^2\,. 
\end{equation*}
It then follows from \eqref{p7a}, \eqref{p400}, \eqref{p21}, and \eqref{p9} that, for $t\in (0,T_n)$,
\begin{align*}
\frac{d}{dt} M_0(f_n(t)) & \le \frac{k_0 \beta_0}{2} \int_0^\infty \int_0^\infty (x+y) f_n(t,x) f_n(t,y)\ dydx \\
& \le k_0 \beta_0 M_0(f_n(t)) M_1(f_n(t)) \le k_0 \beta_0 \varrho M_0(f_n(t))\,.
\end{align*}
Integrating with respect to time yields
\begin{equation*}
M_0(f_n(t)) \le M_0(f_n^{in}) e^{k_0 \beta_0 \varrho t} \le M_0(f^{in}) e^{k_0 \beta_0 \varrho t}\,, \qquad t\in [0,T_n)\,,
\end{equation*} 
which simultaneously excludes the occurrence of finite time blowup  according to \eqref{p200} and gives \eqref{p31}.

Next, since \eqref{p400} implies that $K$ satisfies \eqref{p1} with $\alpha=0$ and $k_1=2k_0$, the proof is the same as that of \eqref{p24}. 
\end{proof}

\begin{proof}[Proof of Theorem~\ref{tha4}]
The proof of Theorem~\ref{tha4} is the same as that of Theorem~\ref{tha2}~(b), the only modification being the recourse to Lemma~\ref{lemb8} instead of Lemma~\ref{lemb2}. 
\end{proof}

\section{Uniqueness}\label{sec3}

Take two solutions $f$ and $g$ to \eqref{cecb} satisfying the assumptions of Theorem~\ref{tha5} and consider $T>0$. There is $\mathcal{M}>0$ such that
\begin{equation}
M_{-2\alpha}(f(t)) + M_{-2\alpha}(g(t)) + M_2(f(t)) + M_2(g(t)) \le \mathcal{M}\,, \qquad t\in [0,T]\,. \label{p50}
\end{equation}
We next set $Z:= f-g$ and $\Sigma := \mathrm{sign}(Z)$ and define $w(x) := \max\{ x^{-\alpha},x\}$ for $x\in (0,\infty)$. For $t\in [0,T]$, it follows from \eqref{pa3} that
\begin{align*}
\frac{d}{dt} \int_0^\infty w(x) |Z(t,x)|\ dx & = \int_0^\infty w(x) \Sigma(t,x) \partial_t(f-g)(t,x)\ dx \\
& = \frac{1}{2} \int_0^\infty \int_0^\infty \zeta_{w\Sigma(t)}(x,y) K(x,y) [f(t,x) f(t,y) - g(t,x) g(t,y)]\ dydx \\
& = \frac{1}{2} \int_0^\infty \int_0^\infty \zeta_{w\Sigma(t)}(x,y) K(x,y) (f+g)(t,x) Z(t,y)\ dydx \\
& = \frac{1}{2} \int_0^\infty \int_0^\infty \zeta_{w\Sigma(t)}(x,y) K(x,y) (f+g)(t,x) \Sigma(t,y) |Z(t,y)|\ dydx\,.
\end{align*}

We now estimate $\Sigma(t,y) K(x,y) \zeta_{w\Sigma(t)}(x,y)$ according to the range of $(x,y)\in (0,\infty)^2$. To this end, we first observe that
\begin{align*}
\Sigma(t,y) \zeta_{w\Sigma(t)}(x,y) & = E(x,y) w(x+y) \Sigma(t,x+y) \Sigma(t,y) \\
& \qquad + (1-E(x,y)) \int_0^{x+y} w(z) \Sigma(t,z) \Sigma(t,y) b(z,x,y)\ dz \\
& \qquad - \Sigma(t,x) \Sigma(t,y) w(x) - w(y) \\
& \le W(x,y)\,, 
\end{align*}
with
\begin{equation*}
W(x,y) := E(x,y) w(x+y) + (1-E(x,y)) \int_0^{x+y} w(z) b(z,x,y)\ dz + w(x) - w(y)\,,
\end{equation*}
so that
\begin{equation*}
\Sigma(t,y) K(x,y) \zeta_{w\Sigma(t)}(x,y) \le K(x,y) W(x,y)\,, \qquad (x,y)\in (0,\infty)^2\,.
\end{equation*}
We also infer from \eqref{p40}, \eqref{p7a}, and \eqref{p500} that, if $x+y>1$, then
\begin{align}
W(x,y) & = E(x,y) (x+y) + (1-E(x,y)) \int_0^1 z^{-\alpha} b(z,x,y)\ dz \nonumber \\
& \qquad + (1-E(x,y)) \int_1^{x+y} z b(z,x,y)\ dz + w(x) - w(y) \nonumber \\
& \le E(x,y) (x+y) + (1-E(x,y)) B_{-\alpha} \nonumber \\
& \qquad + (1-E(x,y)) (x+y) + w(x) - w(y) \nonumber \\
& \le x+y + w(x) - w(y) + B_{-\alpha} \nonumber \\
& \le x + w(x) + B_{-\alpha}\,. \label{p51}
\end{align}

\begin{itemize}
\item If $(x,y)\in (0,1)^2$, then $w(x+y) \le 2^{1+\alpha} (x+y)^{-\alpha}$, $\min\{1, x+y\}^{-\alpha} \leq 2 (x+y)^{-\alpha}$, and it follows from \eqref{p40}, \eqref{p7a}, and \eqref{p500} that
\begin{align*}
W(x,y) & \le \frac{2^{1+\alpha}}{(x+y)^\alpha} + \int_0^{\min\{x+y,1\}} z^{-\alpha} b(z,x,y)\ dz + \mathbf{1}_{(1,\infty)}(x+y) \int_1^{x+y} z b(z,x,y)\ dz \\
& \qquad + x^{-\alpha} - y^{-\alpha} \\
& \le \frac{2^{1+\alpha}+2B_{-\alpha}}{(x+y)^\alpha} + x+y + x^{-\alpha} \le \frac{2^{2+\alpha}+2B_{-\alpha}}{(x+y)^\alpha} + x^{-\alpha} \\
& \le \left( 1+ 2^{2+\alpha}+2B_{-\alpha} \right) x^{-\alpha}\,.
\end{align*}
Together with \eqref{p1}, the above estimate gives
\begin{equation*}
K(x,y) W(x,y) \le k_1 \left( 1+ 2^{2+\alpha}+ 2B_{-\alpha} \right) x^{-2\alpha} w(y)\,, \qquad (x,y)\in (0,1)^2\,.
\end{equation*}
\item If $(x,y)\in (0,1)\times (1,\infty)$, then $x+y>1$ and it follows from \eqref{p1} and \eqref{p51} that
\begin{equation*}
K(x,y) W(x,y) \le k_1 \left( x+x^{-\alpha} + B_{-\alpha} \right) x^{-\alpha} y  \le k_1 ( 2 + B_{-\alpha}) x^{-2\alpha} w(y)\,. 
\end{equation*}
\item Similarly, if $(x,y)\in (1,\infty)\times (0,1)$, then $x+y>1$ and \eqref{p1} and \eqref{p51} ensure that
\begin{equation*}
K(x,y) W(x,y) \le k_1 \left( 2x + B_{-\alpha} \right) x y^{-\alpha}  \le k_1 ( 2 + B_{-\alpha}) x^2 w(y)\,. 
\end{equation*}
\item Finally, if $(x,y)\in (1,\infty)^2$, then $x+y>1$ and we infer from \eqref{p1} and \eqref{p51} that
\begin{equation*}
K(x,y) W(x,y) \le k_1 \left( 2x + B_{-\alpha} \right) x y \le k_1 ( 2 + B_{-\alpha}) x^2 w(y)\,. 
\end{equation*}
\end{itemize}
Summarizing, we have shown that
\begin{equation*}
\Sigma(t,y) K(x,y) \zeta_{w\Sigma(t)}(x,y)  \le k_1 \left( 1+ 2^{2+\alpha}+ 2B_{-\alpha} \right) \left( x^{-2\alpha} + x^2 \right) w(y)\,, \qquad (x,y)\in (0,\infty)^2\,. 
\end{equation*}
Therefore,
\begin{align*}
& \frac{d}{dt} \int_0^\infty w(x) |Z(t,x)|\ dx \\
& \qquad \le k_1 \left( 1+ 2^{2+\alpha}+ 2B_{-\alpha} \right) \int_0^\infty \int_0^\infty \left( x^{-2\alpha} + x^2 \right) w(y) (f+g)(t,x) |Z(t,y)|\ dydx \\
& \qquad \le k_1 \left( 1+ 2^{2+\alpha}+2B_{-\alpha} \right) \mathcal{M} \int_0^\infty  w(y) |Z(t,y)|\ dy\,,
\end{align*}
and we conclude with the help of Gronwall's lemma.

\section*{Acknowledgements}

 Part of this work was done while PhL enjoyed the hospitality of the Department of Mathematics, Indian Institute of Technology Roorkee.

 \appendix
\section{Improved integrability for small sizes}\label{sec.a}

We devote this section to a variant of the de la Vall\'ee-Poussin theorem \cite{dlVP15}, establishing an improved integrability property of integrable functions near zero.

\begin{lemma}\label{leap1}
Consider $h\in X_0$ and $\theta\in (0,1)$. There is a non-negative convex and non-increasing function $\Phi\in C^1((0,\infty))$ depending on $h$ and $\theta$ such that
\begin{equation}
\int_0^\infty \Phi(x) |h(x)|\ dx < \infty \,, \label{pap1}
\end{equation}
and
\begin{equation}
\lim_{x\to 0} \Phi(x) = \infty\,, \qquad \lim_{x\to 0} x^\theta \Phi(x) = 0\,, \qquad x \mapsto x^\theta \Phi(x) \;\text{ is non-decreasing}\,. \label{pap2}
\end{equation}
\end{lemma}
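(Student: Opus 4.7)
The plan is to build $\Phi$ as a $C^1$-smoothing of a piecewise-linear function $\Phi_0$ whose vertices are placed at a carefully chosen sequence $\delta_n \downarrow 0$ and whose values are $\Phi_0(\delta_n) := M + n$ for a fixed integer $M > 1/\theta$. Set $H(x) := \int_0^x |h(s)|\,ds$; since $h\in X_0$, the function $H$ is continuous and non-decreasing on $[0,\infty)$ with $H(0)=0$, so $H(\delta) \to 0$ as $\delta \to 0$. This absolute continuity at zero is the property driving the construction.

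Starting from $\delta_0 := 1$, inductively choose $\delta_{n+1} \in (0,\delta_n)$ small enough that, writing $g_n := \delta_n - \delta_{n+1}$, the following four conditions hold simultaneously:
\begin{equation*}
\mathrm{(i)}\ H(\delta_{n+1}) \le \tfrac{2^{-n-1}}{(n+M)^{2}}, \quad \mathrm{(ii)}\ \delta_{n+1}^{\theta} \le \tfrac{1}{(n+M)^{2}}, \quad \mathrm{(iii)}\ g_n \ge \tfrac{\delta_n}{\theta(n+M)}, \quad \mathrm{(iv)}\ g_{n} \le g_{n-1}\ (n \ge 1).
\end{equation*}
The choice $M > 1/\theta$ makes (iii) compatible with $\delta_{n+1} > 0$, since it rewrites as $\delta_{n+1} \le \delta_n(1 - 1/[\theta(n+M)])$ with $1/[\theta(n+M)] \le 1/(\theta M) < 1$. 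Each of (i)--(iv) is a constraint of the form ``$\delta_{n+1} \le \varepsilon$'' for some $\varepsilon>0$ depending on previously chosen data, so they are jointly enforceable.

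Define $\Phi_{0}:(0,\infty)\to[0,\infty)$ by $\Phi_{0}(x):=M$ for $x \ge 1$ and by linear interpolation on each $[\delta_{n+1},\delta_{n}]$ with $\Phi_{0}(\delta_{n}):=M + n$. Then $\Phi_0$ is continuous, non-negative, non-increasing, and convex: condition (iv) makes the slopes $-1/g_n$ non-decreasing in $x$. On each piece $(\delta_{n+1},\delta_n)$,
\begin{equation*}
(x^{\theta}\Phi_{0})'(x) = x^{\theta-1}\bigl(\theta\Phi_{0}(x) - x/g_n\bigr) \ge x^{\theta-1}\bigl(\theta(M+n) - \delta_n/g_n\bigr) \ge 0
\end{equation*}
by (iii), so $x\mapsto x^\theta\Phi_0(x)$ is non-decreasing on $(0,\infty)$. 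Moreover $\Phi_0(\delta_n) = M+n \to \infty$, $\delta_n^\theta\Phi_0(\delta_n) = (M+n)\delta_n^\theta \to 0$ by (ii), and
\begin{equation*}
\int_0^\infty \Phi_{0}(x)|h(x)|\,dx \le M\|h\|_{L^1} + \sum_{n\ge 0}(M+n+1)\,H(\delta_n) < \infty
\end{equation*}
by (i). Finally, promote $\Phi_0$ to a $C^1$ function $\Phi$ by smoothly rounding each corner at $\delta_n$ on a subinterval of length $\varepsilon_n \ll g_n$, using a $C^1$ convex interpolation matching the slopes of $\Phi_0$ at the endpoints (which exists because the right slope $-1/g_{n-1}$ is $\ge$ the left slope $-1/g_n$ by (iv)); for $\varepsilon_n$ small enough, all pointwise and integral inequalities carry through, since the strict inequality in (iii) provides the margin needed to absorb the perturbation in $\Phi'$ near each $\delta_n$.

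The main technical obstacle is the joint feasibility of (i)--(iv) during the induction together with the preservation of the borderline monotonicity ``$x\mapsto x^\theta\Phi$ non-decreasing'' after the $C^1$-smoothing. The former is handled by the freedom to make $\delta_{n+1}$ arbitrarily small; the latter by the strict slack coming from (iii) at each step.
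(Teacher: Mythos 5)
Your construction is essentially correct, but it takes a genuinely different route from the paper's. The paper does not build the weight directly near the origin: it observes that $h\in X_0$ means $x\mapsto 1/x$ lies in $L^1((0,\infty),x|h(x)|\,dx)$, invokes a refined de la Vall\'ee-Poussin theorem (with explicit recursive construction of the modulus $\Phi_0$, whose derivative is piecewise linear and hence $\Phi_0\in C^1$ for free) to get a superlinear convex $\Phi_0$ with $\Phi_0'(\xi)/\xi^{p-1}\to 0$ for $p\in(1,2]$, and then transfers it back via $\Phi(x):=x\,\Phi_0(1/x)+2/\theta$. The differential inequality $\theta\Phi_0'(\xi)-\xi\Phi_0''(\xi)\ge 2(\theta-1)$, proved from the explicit form of $\Phi_0'$, yields the monotonicity of $x\mapsto x^\theta\Phi(x)$ after the transfer; the additive constant $2/\theta$ absorbs the defect. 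Your approach instead works directly on the $x$-side with a piecewise-linear prototype, which avoids quoting de la Vall\'ee-Poussin but forces you to pay for $C^1$ by an explicit corner-rounding step that the paper gets for free.

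Two points deserve tightening. First, your claim that each of (i)--(iv) ``is a constraint of the form $\delta_{n+1}\le\varepsilon$'' is not accurate: condition (iv), $g_n\le g_{n-1}$, reads $\delta_{n+1}\ge\delta_n-g_{n-1}$ and is a \emph{lower} bound on $\delta_{n+1}$. It is still jointly enforceable --- either because (iii) at step $n-1$ gives $g_{n-1}\ge\delta_{n-1}/[\theta(n-1+M)]>\delta_n/[\theta(n+M)]$, so the lower bound $\delta_n-g_{n-1}$ sits strictly below the upper bound $\delta_n(1-1/[\theta(n+M)])$ from (iii); or more simply because adding the auxiliary upper bound $\delta_{n+1}\le\delta_n/2$ makes (iv) automatic ($g_n<\delta_n\le g_{n-1}$) --- but the justification needs this extra observation. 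Second, the $C^1$-smoothing needs a bit more than a wave of the hand: rounding a convex corner at $\delta_n$ keeps $|\Phi'|\le 1/g_n$ there but may push $\Phi$ below $\Phi_0$, which eats into the margin in $\theta\Phi(x)-x|\Phi'(x)|\ge 0$. Replacing (iii) by the strict inequality $g_n\ge 2\delta_n/[\theta(n+M)]$, say, gives a uniform slack that survives the rounding, and then the argument closes. With these two repairs the proof is sound and is a legitimate elementary alternative to the paper's.
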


\begin{proof}
Since $h\in X_0$, the function $x\mapsto 1/x$ belongs to $L^1((0,\infty),x|h(x)| dx)$ and we infer from a refined version of the de la Vall\'ee-Poussin theorem, see \cite{Le77} and \cite[Theorem~7.1.6]{BLL19}, that there is a function $\Phi_0\in C^1([0,\infty))$ satisfying the following properties: $\Phi_0$ is convex, $\Phi_0(0)=\Phi_0'(0)=0$, $\Phi_0'$ is a concave function which is positive in $(0,\infty)$,
\begin{align}
& \lim_{\xi\to\infty} \Phi_0'(\xi) = \lim_{\xi\to\infty} \frac{\Phi_0(\xi)}{\xi} = \infty\,, \label{pap3} \\
& \lim_{\xi\to\infty} \frac{\Phi_0'(\xi)}{\xi^{p-1}} = 0 \;\text{ for }\; p\in (1,2]\,, \label{pap4}
\end{align}
and
\begin{equation}
\int_0^\infty \Phi_0\left( \frac{1}{x} \right) x |h(x)|\ dx < \infty \,. \label{pap5}
\end{equation}
Moreover, $\Phi_0$ is explicitly given by
\begin{equation*}
\Phi_0'(\xi) = \left\{ 
\begin{split}
& \frac{\xi}{j_1-j_0}\,, \qquad \xi\in [0,j_1]\,, \\
& \frac{\xi-j_m}{j_{m+1}-j_m} + m + \frac{1}{j_1-j_0}\,, \qquad \xi\in [j_m,j_{m+1}]\,, \quad m\ge 1\,,
\end{split}
\right.
\end{equation*}
and
\begin{equation*}
\Phi_0(\xi) = \int_0^\xi \Phi_0'(\xi_*)\ d\xi_*\,, \qquad \xi\ge 0\,,
\end{equation*}
where $(j_m)_{m\ge 0}$ is a sequence of positive integers which is constructed recursively and satisfies 
\begin{align*}
j_0=1\,, \quad & j_{m+1} \ge \max\{ 2j_m , e^{m+1}\}\,, \qquad m\ge 0\,, \\
& \int_0^{1/j_m} |h(x)|\ dx \le \frac{1}{m^2}\,, \qquad m\ge 1\,,
\end{align*}
see \cite[Theorem~7.1.6]{BLL19}. We next claim that
\begin{equation}
\theta \Phi_0'(\xi) - \xi \Phi_0''(\xi) \ge 2 (\theta-1)\,, \qquad \xi\in [0,\infty)\setminus \{j_m\,, \ m\ge 1\}\,. \label{pap6}
\end{equation}
Indeed, if $\xi\in [0,j_1)$, then 
\begin{equation*}
\theta \Phi_0'(\xi) - \xi \Phi_0''(\xi) = (\theta-1) \frac{\xi}{j_1-j_0} \ge (\theta-1) \frac{j_1}{j_1-j_0} \ge 2(\theta-1)\,, 
\end{equation*}
as $j_1\le 2(j_1-j_0)$. Similarly, if $m\ge 1$ and $\xi\in (j_m,j_{m+1})$, then
\begin{align*}
\theta \Phi_0'(\xi) - \xi \Phi_0''(\xi) & = (\theta-1) \frac{\xi-j_m}{j_{m+1}-j_m} + \theta m + \frac{\theta}{j_1-j_0} - \frac{j_m}{j_{m+1}-j_m} \\
& \ge \theta - 1 + \theta - 1 = 2(\theta-1)\,,
\end{align*}
as $j_m \le j_{m+1}-j_m$. We have thus proved \eqref{pap6} which gives, after integration,
\begin{equation}
(1+\theta) \Phi_0(\xi) - \xi \Phi_0'(\xi) \ge 2(\theta-1) \xi\,, \qquad \xi\ge 0\,. \label{pap7}
\end{equation}

We now set 
\begin{equation*}
\Phi(x) := x \Phi_0\left( \frac{1}{x} \right) + \frac{2}{\theta}\ge 0\,, \qquad x\in (0,\infty)\,.
\end{equation*}
Since $h\in X_0$, we first infer from \eqref{pap5} that
\begin{equation*}
\int_0^\infty \Phi(x) |h(x)|\ dx = \int_0^\infty \Phi_0\left( \frac{1}{x} \right) x |h(x)|\ dx + \frac{2}{\theta} \|h\|_{L^1(0,\infty)} < \infty\,,
\end{equation*}
hence \eqref{pap1}. It next follows from \eqref{pap3} and the convexity of $\Phi_0$ that
\begin{equation*}
\lim_{x\to 0} \Phi(x)\ge \lim_{\xi\to \infty} \frac{\Phi_0(\xi)}{\xi} = \infty\,,
\end{equation*}
and
\begin{align*}
\Phi'(x) & = \Phi_0\left( \frac{1}{x} \right) - \frac{1}{x} \Phi_0'\left( \frac{1}{x} \right) \le 0\,, \qquad x\in (0,\infty)\,, \\
\Phi''(x) & = \frac{1}{x^3} \Phi_0''\left( \frac{1}{x} \right) \ge 0\,, \qquad x\in (0,\infty)\,.
\end{align*}
Consequently, $\Phi$ is a non-negative convex and non-increasing function on $(0,\infty)$ and satisfies the first property stated in \eqref{pap2}. Moreover, by \eqref{pap4} and L'Hospital rule, 
\begin{equation*}
\lim_{x\to 0} x^\theta \Phi(x) = \lim_{x\to 0} x^{1+\theta} \Phi_0\left( \frac{1}{x} \right) = \lim_{\xi\to\infty} \frac{\Phi_0(\xi)}{\xi^{1+\theta}} = \lim_{\xi\to\infty} \frac{\Phi_0'(\xi)}{(1+\theta)\xi^\theta} = 0\,,
\end{equation*}
and we have established the second property stated in \eqref{pap2}. Finally, introducing $\Theta(x) := x^\theta \Phi(x)$, $x\in (0,\infty)$, we deduce from \eqref{pap7} that
\begin{align*}
\Theta'(x)  & = 2 x^{\theta-1} + x^\theta \left[ (1+\theta) \Phi_0\left( \frac{1}{x} \right) - \frac{1}{x} \Phi_0'\left( \frac{1}{x} \right) \right] \ge [2+2(\theta-1)] x^{\theta-1} \ge 0\,,
\end{align*}
so that $\Theta$ is non-decreasing on $(0,\infty)$.
\end{proof}

\bibliographystyle{siam}
\bibliography{CEwCB}

\end{document}